\documentclass[a4paper]{amsart}
\usepackage[foot]{amsaddr}

\usepackage[latin1]{inputenc}
\usepackage{amssymb,amsmath,amsthm}
\usepackage{amsmath}
\usepackage{amsfonts}
\usepackage{enumitem}
\usepackage{booktabs}
\usepackage{ifthen}
\usepackage{tikz}
\usetikzlibrary{math,calc}
\usepackage{url}
\usepackage{hyphenat}
\usepackage{hyperref}
\usepackage{graphicx}
\usepackage{calc,etoolbox}
\usepackage{rotating}
\usepackage[para]{threeparttable}
\usepackage{caption}
\usepackage{multicol}
\usepackage{multirow}
\usepackage{subfig}
\usepackage{caption}

\theoremstyle{plain}
\newtheorem{theorem}{Theorem}[section]
\newtheorem{proposition}[theorem]{Proposition}
\newtheorem{lemma}[theorem]{Lemma}
\newtheorem{corollary}[theorem]{Corollary}

\theoremstyle{definition}
\newtheorem{definition}[theorem]{Definition}

\newtheorem{observation}[theorem]{Observation}
\newtheorem{claim}{Claim}
\numberwithin{claim}{theorem}
\newenvironment{pfclaim}[1][Proof]{\begin{trivlist}
\item[\hskip \labelsep {\textit{{#1}.}}]} {{\footnotesize $\blacksquare$}\end{trivlist}}

\numberwithin{figure}{section}
\numberwithin{table}{section}

\newcommand{\IN}{\ensuremath{\mathbb{N}}}

\newcommand{\minorant}{\ensuremath{\leqq}}
\newcommand{\minor}{\ensuremath{\leq}}
\newcommand{\nset}[1]{\ensuremath{[{#1}]}}
\newcommand{\card}[1]{\ensuremath{\lvert{#1}\rvert}}
\newcommand{\gendefault}{}
\newcommand{\gen}[2][\gendefault]{\ensuremath{\langle{#2}\rangle_{#1}}}
\newcommand{\clonegen}[1]{\gen[]{#1}}
\newcommand{\vect}[1]{\ensuremath{\mathbf{#1}}}
\newcommand{\lhs}{\hspace{2em}&\hspace{-2em}}


\newcommand{\clIntVal}[3]{\ensuremath{#1_{\ifthenelse{\equal{#2}{}}{\mathord{*}}{#2}\ifthenelse{\equal{#3}{}}{\mathord{*}}{#3}}}}


\newcommand{\clAll}{\ensuremath{\mathsf{\Omega}}}

\newcommand{\clOX}{\ensuremath{{\clIntVal{\clAll}{0}{}}}}
\newcommand{\clIX}{\ensuremath{{\clIntVal{\clAll}{1}{}}}}
\newcommand{\clXO}{\ensuremath{{\clIntVal{\clAll}{}{0}}}}
\newcommand{\clXI}{\ensuremath{{\clIntVal{\clAll}{}{1}}}}

\newcommand{\clOXCI}{\ensuremath{\clOX \cup \clVaki}}

\newcommand{\clIXCO}{\ensuremath{\clIX \cup \clVako}}

\newcommand{\clXOCI}{\ensuremath{\clXO \cup \clVaki}}

\newcommand{\clXICO}{\ensuremath{\clXI \cup \clVako}}

\newcommand{\clOI}{\ensuremath{\clIntVal{\clAll}{0}{1}}}

\newcommand{\clS}{\ensuremath{\mathsf{S}}}
\newcommand{\clSc}{\ensuremath{\clIntVal{\clS}{0}{1}}}

\newcommand{\clSM}{\ensuremath{\mathsf{SM}}}

\newcommand{\clM}{\ensuremath{\mathsf{M}}}
\newcommand{\clMo}{\ensuremath{\clIntVal{\clM}{0}{}}}
\newcommand{\clMi}{\ensuremath{\clIntVal{\clM}{}{1}}}
\newcommand{\clMc}{\ensuremath{\clIntVal{\clM}{0}{1}}}
\newcommand{\clMneg}{\ensuremath{{\overline{\clM}}}}
\newcommand{\clMoneg}{\ensuremath{\clIntVal{\clMneg}{1}{}}}
\newcommand{\clMineg}{\ensuremath{\clIntVal{\clMneg}{}{0}}}
\newcommand{\clMcneg}{\ensuremath{\clIntVal{\clMneg}{1}{0}}}
\newcommand{\clUk}[1]{\ensuremath{\mathsf{U}^{#1}}}

\newcommand{\clTcUk}[1]{\ensuremath{\clIntVal{\clUk{#1}}{0}{1}}}

\newcommand{\clMUk}[1]{\ensuremath{\mathsf{M}\clUk{#1}}}
\newcommand{\clMcUk}[1]{\ensuremath{\clIntVal{\clMUk{#1}}{0}{1}}}
\newcommand{\clWk}[1]{\ensuremath{\mathsf{W}^{#1}}}

\newcommand{\clTcWk}[1]{\ensuremath{\clIntVal{\clWk{#1}}{0}{1}}}

\newcommand{\clMWk}[1]{\ensuremath{\mathsf{M}\clWk{#1}}}
\newcommand{\clMcWk}[1]{\ensuremath{\clIntVal{\clMWk{#1}}{0}{1}}}


\newcommand{\clVak}{\ensuremath{\mathsf{C}}}
\newcommand{\clVaka}[1]{\ensuremath{\clVak_{#1}}}
\newcommand{\clVakaAB}[2]{\ensuremath{\clVak^{#2}_{#1}}}
\newcommand{\clVako}{\ensuremath{\clVaka{0}}}
\newcommand{\clVaki}{\ensuremath{\clVaka{1}}}
\newcommand{\clEmpty}{\ensuremath{\mathsf{\emptyset}}}


\newcommand{\clL}{\ensuremath{\mathsf{L}}}
\newcommand{\clLo}{\ensuremath{\clIntVal{\clL}{0}{}}}
\newcommand{\clLi}{\ensuremath{\clIntVal{\clL}{}{1}}}
\newcommand{\clLc}{\ensuremath{\clIntVal{\clL}{0}{1}}}
\newcommand{\clLS}{\ensuremath{\mathsf{LS}}}
\newcommand{\clLambda}{\ensuremath{\mathsf{\Lambda}}}
\newcommand{\clV}{\ensuremath{\mathsf{V}}}
\newcommand{\clLambdac}{\ensuremath{\clIntVal{\clLambda}{0}{1}}}
\newcommand{\clVc}{\ensuremath{\clIntVal{\clV}{0}{1}}}
\newcommand{\clLambdao}{\ensuremath{\clIntVal{\clLambda}{0}{}}}
\newcommand{\clVo}{\ensuremath{\clIntVal{\clV}{0}{}}}
\newcommand{\clLambdai}{\ensuremath{\clIntVal{\clLambda}{}{1}}}
\newcommand{\clVi}{\ensuremath{\clIntVal{\clV}{}{1}}}
\newcommand{\clOmegaOne}{\ensuremath{\clAll(1)}}
\newcommand{\clIstar}{\ensuremath{\mathsf{I}^{*}}}
\newcommand{\clIa}[1]{\ensuremath{\mathsf{I}_{#1}}}
\newcommand{\clI}{\ensuremath{\mathsf{I}}}
\newcommand{\clIo}{\ensuremath{\mathsf{I}_0}}
\newcommand{\clIi}{\ensuremath{\mathsf{I}_1}}
\newcommand{\clIc}{\ensuremath{\mathsf{J}}}
\newcommand{\clUinf}{\ensuremath{\clUk{\infty}}}
\newcommand{\clWinf}{\ensuremath{\clWk{\infty}}}

\newcommand{\vak}[1]{\ensuremath{\mathrm{c}_{#1}}}
\newcommand{\id}{\ensuremath{\mathrm{id}}}

\DeclareMathOperator{\pr}{pr}
\newcommand{\arity}[1]{\ensuremath{\mathrm{ar}({#1})}}
\newcommand{\Alt}{\ensuremath{\mathrm{Alt}}}

\newcommand{\closys}[1]{\ensuremath{\mathcal{L}_{#1}}}
\newcommand{\clProj}[1]{\ensuremath{\mathsf{J}_{#1}}}

\DeclareMathOperator{\range}{Im}

\makeatletter
\newcommand{\displaybump}{\hbox to \@totalleftmargin{\hfil}}
\makeatother

\hyphenation{Bool-ean clon-oid clon-oids}

\begin{document}
\title[Clonoids of Boolean functions]{Clonoids of Boolean functions with essentially unary, linear, semilattice, or 0- or 1-separating source and target clones}

\author{Erkko Lehtonen}

\address{%
    Department of Mathematics \\
    Khalifa University of Science and Technology \\
    P.O. Box 127788 \\
    Abu Dhabi \\
    United Arab Emirates
}

\date{\today}

\begin{abstract}
Extending Sparks's theorem, we determine the cardinality of the lattice of $(C_1,C_2)$\hyp{}clonoids of Boolean functions for certain pairs $(C_1,C_2)$ of clones of essentially unary, linear, or $0$- or $1$\hyp{}separating functions or semilattice operations.
When such a $(C_1,C_2)$\hyp{}clonoid lattice is uncountable, the proof is in most cases based on exhibiting a countably infinite family of functions with the property that distinct subsets thereof always generate distinct $(C_1,C_2)$\hyp{}clonoids.
In the cases when the lattice is finite, we enumerate the corresponding $(C_1,C_2)$\hyp{}clonoids.
We also provide a summary of the known results on cardinalities of $(C_1,C_2)$\hyp{}clonoid lattices of Boolean functions.
\end{abstract}

\maketitle


\section{Introduction}

Composition is a most fundamental operation on (multivariate) functions.
This notion can be extended to sets of functions in a natural way:
the composition of sets $F$ and $G$ of multivariate functions is the set $FG$ of all composite functions of the form $f ( g_1, \dots, g_n)$, where $f \in F$ and $g_1, \dots, g_n \in G$.

Clones, minions, and clonoids are sets of multivariate functions from a set $A$ to a set $B$, or operations on a set $A$,
with closure properties that can be expressed in terms of function class composition.
Namely, a \emph{clone} on $A$ is a set $C$ of operations on $A$ that contains all projections and $C C \subseteq C$.
For fixed clones $C_1$ and $C_2$ on sets $A$ and $B$, respectively, a set $K$ of multivariate functions from $A$ to $B$ is a \emph{$(C_1,C_2)$\hyp{}clonoid} if $K C_1 \subseteq K$ and $C_2 K \subseteq K$.
Denoting by $\clProj{A}$ and $\clProj{B}$ the clones of all projections on $A$ and $B$, respectively, we obtain as a special case the $(\clProj{A},\clProj{B})$\hyp{}clonoids which are also called \emph{minions} or \emph{minor\hyp{}closed classes}.

The terminology we use here is relatively modern.
To the best of the author's knowledge, the term ``clone'' was first used in the universal\hyp{}algebraic sense in the 1965 monograph of Cohn~\cite{Cohn}, who attributed it to Philip Hall.
The term ``clonoid'' was introduced in the 2016 paper by Aichinger and Mayr \cite{AicMay}, and ``minion'' was coined by Opr\v{s}al around the year 2018 (see \cite[Definition~2.20]{BarBulKroOpr}, \cite{BulKroOpr}).
It should, however, be noted that these concepts have appeared in the literature much earlier.
For further information and general background on universal algebra and clones, see, e.g., the monographs by Bergman~\cite{Bergman} and Szendrei~\cite{Szendrei}.

In universal algebra, clones arise naturally as sets of term operations of algebras, or as sets of polymorphisms of relations.
Minions, in turn, arise as term operations induced by terms of height $1$, or as sets of polymorphisms of relation pairs (see Pippenger~\cite{Pippenger}).
If in a relation pair $(R,S)$, the relations $R$ and $S$ are invariants of clones $C_1$ and $C_2$, respectively, the set of polymorphisms of $(R,S)$ is a $(C_1,C_2)$\hyp{}clonoid (see Couceiro and Foldes \cite{CouFol-2005,CouFol-2009}).

In theoretical computer science,
constraint satisfaction problems (CSP) are a central topic in computational complexity theory.
Universal\hyp{}algebraic tools, including clones and polymorphisms, have proved successful in the analysis of computational complexity of CSPs.
In particular, minions arise in the context of a new variant called promise CSP\@.
For further details, see the excellent survey article by Barto et al.\ \cite{BarBulKroOpr}.

One of the main open problems in universal algebra is the classification of clones on finite sets with at least three elements.
(The problem of classifying the clones on a one\hyp{}element set is trivial.
The clones on a two\hyp{}element set were described by Post \cite{Post}.)
On account of the growing interest in minions and clonoids, we are inevitably led to the classification problem of $(C_1,C_2)$\hyp{}clonoids.
Such classification results for certain types of clone pairs $(C_1,C_2)$ have appeared in the literature,
for example,
Fioravanti~\cite{Fioravanti-AU,Fioravanti-IJAC},
Kreinecker~\cite{Kreinecker},
and
Mayr and Wynne~\cite{MayWyn}.

Considering that the clones on the two\hyp{}element set $\{0,1\}$ are well known, the present author initiated the effort of systematically counting and enumerating all $(C_1,C_2)$\hyp{}clonoids, for each pair $(C_1,C_2)$ of clones on $\{0,1\}$.
An opportune starting point towards this goal is the following remarkable result due to Sparks.
Here, $\closys{(C_1,C_2)}$ denotes the lattice of $(C_1,C_2)$\hyp{}clonoids.
An operation $f \colon A^n \to A$ is a \emph{near\hyp{}unanimity operation} if it satisfies all identities of the form $f(x, \dots, x, y, x, \dots, x) \approx x$, where the single $y$ is at any argument position.
A ternary near\hyp{}unanimity operation is called a \emph{majority operation}.
A \emph{Mal'cev operation} is a ternary operation that satisfies the identities $f(x,x,y) \approx f(y,x,x) \approx y$.

\begin{theorem}[{Sparks~\cite[Theorem~1.3]{Sparks-2019}}]
\label{thm:Sparks}
Let $A$ be a finite set with $\card{A} > 1$, and let $B = \{0,1\}$.
Let $C$ be a clone on $B$.
Then the following statements hold.
\begin{enumerate}[label={\upshape(\roman*)}]
\item\label{thm:Sparks:finite} $\closys{(\clProj{A},C)}$ is finite if and only if $C$ contains a near\hyp{}unanimity operation.
\item\label{thm:Sparks:countable} $\closys{(\clProj{A},C)}$ is countably infinite if and only if $C$ contains a Mal'cev operation but no majority operation.
\item\label{thm:Sparks:uncountable} $\closys{(\clProj{A},C)}$ has the cardinality of the continuum if and only if $C$ contains neither a near\hyp{}unanimity operation nor a Mal'cev operation.
\end{enumerate}
\end{theorem}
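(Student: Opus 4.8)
The plan is to pass to the relational side via the Galois connection of Couceiro and Foldes: a set of functions $A^n \to \{0,1\}$ is a $(\clProj{A},C)$\hyp{}clonoid exactly when it is the set of polymorphisms of some set of relation pairs $(R,S)$ in which $R \subseteq A^m$ is arbitrary (every relation on $A$ being invariant under projections) and the target component $S \subseteq \{0,1\}^m$ is an invariant of $C$. Under this correspondence the lattice $\closys{(\clProj{A},C)}$ is dual to the lattice of relationally closed families of such pairs, so that counting $(\clProj{A},C)$\hyp{}clonoids amounts to counting closed families of relation pairs with $C$\hyp{}invariant target component. A preliminary step, carried out by inspection of Post's lattice, is to check that the three hypotheses genuinely partition the clones on $\{0,1\}$; the only point needing care is that a clone possessing both a near\hyp{}unanimity and a Mal'cev operation already possesses a majority operation, so that cases \ref{thm:Sparks:finite} and \ref{thm:Sparks:countable} cannot overlap.

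For \ref{thm:Sparks:finite} in the direction ``$C$ has a near\hyp{}unanimity operation $\Rightarrow$ finite'', I would invoke the Baker--Pixley theorem: if $C$ contains a $(k+1)$\hyp{}ary near\hyp{}unanimity operation, then every invariant $S$ of $C$ is determined by its projections onto $k$\hyp{}element sets of coordinates, hence is an intersection of cylindrifications of relations of arity at most $k$. Consequently every $(\clProj{A},C)$\hyp{}clonoid is already cut out by relation pairs $(R,S)$ of arity at most $k$; since $\card{A}$ and $\card{\{0,1\}}$ are finite there are only finitely many such pairs, and therefore only finitely many Galois\hyp{}closed families. The reverse implication of \ref{thm:Sparks:finite} is not proved directly but falls out of the infinitude arguments below, which already apply whenever $C$ has no near\hyp{}unanimity operation.

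For \ref{thm:Sparks:countable} I would establish the two bounds separately. For the upper bound, a clone with a Mal'cev operation but no majority is, by Post's lattice, contained in the clone $\clL$ of linear (affine) functions while containing the minority operation; hence its invariants are affine subspaces of $\{0,1\}^m$. For each arity $m$ there are only finitely many affine subspaces, so there are only countably many admissible relation pairs in total, and thus at most countably many $(\clProj{A},C)$\hyp{}clonoids. For the lower bound, the absence of a majority (indeed of any near\hyp{}unanimity operation) means the target invariants cannot be bounded in arity; exhibiting a strictly increasing $\omega$\hyp{}chain of clonoids then shows the lattice is infinite, and together with the upper bound it is countably infinite.

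The crux is \ref{thm:Sparks:uncountable}: when $C$ has neither a near\hyp{}unanimity nor a Mal'cev operation I would produce a countably infinite family $(f_k)_{k \in \IN}$ of functions that is \emph{independent} for $(\clProj{A},C)$\hyp{}generation, in the sense that $f_k \notin \clonegen{\{\, f_j : j \neq k \,\}}$ for every $k$; distinct subsets $T \subseteq \IN$ then generate distinct clonoids, yielding $2^{\aleph_0}$ of them, which matches the obvious upper bound, since the set of all finitary functions from $A$ to $\{0,1\}$ is countable and hence there are at most $2^{\aleph_0}$ sets of them. The functions $f_k$ should be built as an infinite antichain in the generation preorder, using the failure of near\hyp{}unanimity to prevent any bounded\hyp{}arity collapse and the failure of Mal'cev to prevent the affine collapse that forces mere countability in \ref{thm:Sparks:countable}. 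The hard part will be exactly this construction and the verification of independence: for each $k$ one must supply a separating relation pair $(R_k,S_k)$, with $S_k$ an invariant of $C$, that is preserved by every $f_j$ with $j \neq k$ but violated by $f_k$, and assembling such separators uniformly from the structure of a near\hyp{}unanimity\hyp{}free and Mal'cev\hyp{}free clone is where the real work lies.
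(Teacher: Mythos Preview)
The paper does not contain a proof of this theorem at all: Theorem~\ref{thm:Sparks} is quoted as Sparks's result \cite[Theorem~1.3]{Sparks-2019} and used as a starting point, but no argument for it is given here. So there is nothing in the present paper to compare your proposal against.

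That said, your outline is a plausible reconstruction of how such a result could be proved, and it is broadly in the spirit of the original. A few remarks on the sketch itself. Your reduction in \ref{thm:Sparks:countable}, that a clone on $\{0,1\}$ with a Mal'cev operation but no majority must lie in the interval $[\clLc,\clL]$, is correct and is the key structural observation on the Boolean side. Your upper bound there, however, is stated too loosely: having only countably many admissible relation pairs does not by itself bound the number of Galois\hyp{}closed families of such pairs, which a priori could still be $2^{\aleph_0}$; what one actually needs is a finiteness or chain condition on the closed sets themselves (for instance, that every $(\clProj{A},C)$\hyp{}clonoid is finitely generated, or that the invariant relations of $C$ are controlled arity\hyp{}by\hyp{}arity in a way that forces a descending chain condition). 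For \ref{thm:Sparks:uncountable} your plan to exhibit an independent family is exactly the standard device (and indeed the present paper uses the same technique, with Pippenger's functions, for several of its own uncountability results), but you have not said how the absence of both near\hyp{}unanimity and Mal'cev is actually used to build the separating relation pairs; that is where the substance of Sparks's argument lives, and your proposal leaves it as a black box.
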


It should be noted that Theorem~\ref{thm:Sparks} and its published proof only reveal the cardinality of the lattice $\closys{(C_1,C_2)}$ of $(C_1,C_2)$\hyp{}clonoids; they do not actually describe the clonoids themselves.
Moreover, the source clone $C_1$ is always assumed to be the clone of projections.
In a series of papers of the present author \cite{CouLeh-Lcstability,Lehtonen-SM,Lehtonen-nu,Lehtonen-discmono}, Theorem~\ref{thm:Sparks} was extended and sharpened.
For many pairs $(C_1,C_2)$ of clones on $\{0,1\}$, the cardinality of the lattice $\closys{(C_1,C_2)}$ was determined and the $(C_1,C_2)$\hyp{}clonoids were described whenever the lattice was shown to be finite or countably infinite.
See Section~\ref{sec:summary} and Table~\ref{table:card} for a more detailed summary of our earlier work.

In the current paper, we take a few modest additional steps towards classifying $(C_1,C_2)$\hyp{}clonoids of Boolean functions.
We focus on certain pairs $(C_1,C_2)$ of clones, where the source $C_1$ and the target $C_2$ are clones of essentially unary functions ($\clIo$, $\clIi$, $\clIstar$, $\clOmegaOne$), linear functions ($\clL$), semilattice operations ($\clVo$, $\clV$, $\clLambdai$, $\clLambda$), or $0$- or $1$\hyp{}separating functions ($\clMcUk{\infty}$, $\clMcWk{\infty}$, $\clUk{2}$, $\clWk{2}$).
(For the definitions of, and the notation for, the clones on $\{0,1\}$, see Subsection~\ref{subsec:Bf} and Figure~\ref{fig:Post}.)
Our main results are the following:
\begin{itemize}
\item Theorem~\ref{thm:uncountable}:
For all clones $C_1$ and $C_2$ such that $C_1 \subseteq K_1$ and $C_2 \subseteq K_2$, where
\[
\begin{split}
\displaybump
(K_1,K_2) \in
\{
&
(\clOmegaOne, \clOmegaOne),
(\clOmegaOne, \clLambda),
(\clOmegaOne, \clV),
(\clIstar, \clUinf),
(\clIstar, \clWinf),
\\
&(\clIo, \clUinf),
(\clIi, \clUinf),
(\clIo, \clWinf),
(\clIi, \clWinf),
(\clL, \clOmegaOne),
\\
&
(\clLambda, \clLambda),
(\clLambda, \clV),
(\clV, \clLambda),
(\clV, \clV),
(\clLambda, \clOmegaOne),
(\clV, \clOmegaOne)
\},
\end{split}
\]
there are an uncountable infinitude of $(C_1,C_2)$\hyp{}clonoids.
The proof is based on exhibiting a countably infinite set $F$ of functions with the property that $f$ is in the $(K_1,K_2)$\hyp{}clonoid generated by $F$ if and only if $f \in F$.
Since $F$ has an uncountable infinity of subsets, the result follows.

\item Theorem~\ref{thm:U2Uinf}:
For all clones $C_1$ and $C_2$ such that $C_1 \subseteq K_1$ and $C_2 \subseteq K_2$ for some $K_1 \in \{\clUk{2}, \clWk{2}\}$ and $K_2 \in \{\clUk{\infty}, \clWk{\infty}\}$, there are an uncountable infinitude of $(C_1,C_2)$\hyp{}clonoids.
The proof is based on the universality of the homomorphism order of upwards closed loopless hypergraphs
and an application of a result on so\hyp{}called $\clUk{k}$- and $\clWk{k}$\hyp{}minors of Boolean functions due to Ne\v{s}et\v{r}il and the present author \cite{LehNes-clique}.

\item Theorem~\ref{thm:finite}:
For all clones $C_1$ and $C_2$ such that $C_1 \supseteq K_1$ and $C_2 \supseteq K_2$ for some
$K_1 \in \{\clI, \clVo, \clLambdai\}$
and
$K_2 \in \{\clMcUk{\infty}, \clMcWk{\infty} \}$,
there are only finitely many $(C_1,C_2)$\hyp{}clonoids.

\item
Additionally, we establish in Section~\ref{sec:helpful} some general facts about relationships between $(C_1,C_2)$- and $(C'_1,C'_2)$\hyp{}clonoids when the clones $C_1$ and $C'_1$ or $C_2$ and $C'_2$ are duals of each other, or when the target clone $C'_2$ is obtained from $C_2$ by adding constant functions or negated projections.
These help us find the cardinalities of the $(C_1,C_2)$\hyp{}clonoid lattices for a few more clone pairs $(C_1,C_2)$.
\end{itemize}

In the final Section~\ref{sec:summary}, we summarize the known facts about the $(C_1,C_2)$\hyp{}clonoid lattices of Boolean functions, giving references to the relevant results in the literature.
We also highlight the clone pairs $(C_1,C_2)$ for which the cardinality of $\closys{(C_1,C_2)}$ is still unknown, which gives us directions for further research.


\section{Preliminaries}

\subsection{General}

The sets of nonnegative integers and positive integers are denoted by $\IN$ and $\IN^{+}$, respectively.
For $n \in \IN$, let $\nset{n} := \{ \, i \in \IN^{+} \mid 1 \leq i \leq n \, \}$. 
We denote tuples by bold letters and their components by the corresponding italic letters, e.g., $\vect{a} = (a_1, \dots, a_n)$.

\subsection{Clones and clonoids}

Let $A$ and $B$ be nonempty sets.
A \emph{function of several arguments} from $A$ to $B$ is a mapping $f \colon A^n \to B$ for some positive integer $n$ called the \emph{arity} of $f$.
We denote by $\mathcal{F}_{AB}^{(n)}$ the set of all $n$\hyp{}ary functions from $A$ to $B$ and we let $\mathcal{F}_{AB} := \bigcup_{n \in \IN^{+}} \mathcal{F}_{AB}^{(n)}$.
In the case when $A = B$, we speak of \emph{operations} on $A$, and we write $\mathcal{O}_A^{(n)}$ and $\mathcal{O}_A$ for $\mathcal{F}_{AA}^{(n)}$ and $\mathcal{F}_{AA}$, respectively.
For $C \subseteq \mathcal{F}_{AB}$ and $n \in \IN$, the \emph{$n$\hyp{}ary part} of $C$ is $C^{(n)} := C \cap \mathcal{F}_{AB}^{(n)}$.

If $f \in \mathcal{F}_{BC}^{(n)}$ and $g_1, \dots, g_n \in \mathcal{F}_{AB}^{(m)}$, then the \emph{composition} of $f$ with $(g_1, \dots, g_n)$, denoted by $f(g_1, \dots, g_n)$ is a function in $\mathcal{F}_{AC}^{(m)}$ and is defined by the rule
\[
f(g_1, \dots, g_n)(\vect{a}) :=
f(g_1(\vect{a}), \dots, g_n(\vect{a})),
\]
for all $\vect{a} \in A^m$.
The notion of composition extends to function classes.
For $F \subseteq \mathcal{F}_{BC}$ and $G \subseteq \mathcal{F}_{AB}$, the \emph{composition} of $F$ with $G$, denoted by $FG$, is
the set of all composite functions of the form $f(g_1, \dots, g_n)$, where, for some $n, m \in \IN^{+}$, $f \in F^{(n)}$ and $g_1, \dots, g_n \in G^{(m)}$.
Function class composition is monotone, i.e., if $F, F' \subseteq \mathcal{F}_{BC}$, $G, G' \subseteq \mathcal{F}_{AB}$ satisfy $F \subseteq F'$ and $G \subseteq G'$, then $F G \subseteq F' G'$.

For $n, i \in \IN$ with $1 \leq i \leq n$, the $i$\hyp{}th $n$\hyp{}ary \emph{projection} on $A$ is the operation $\pr_i^{(n)} \colon A^n \to A$, $(a_1, \dots, a_n) \mapsto a_i$.
A \emph{clone} on $A$ is a set $C \subseteq \mathcal{O}_A$ that is closed under composition and contains all projections, in symbols, $C C \subseteq C$ and $\clProj{A} \subseteq C$.
The clones on $A$ form a closure system on $\mathcal{O}_A$, and the clone generated by a set $F \subseteq \mathcal{O}_A$, i.e., the least clone containing $F$, is denoted by $\clonegen{F}$.

Let $A$ and $B$ be arbitrary nonempty sets, and let $C_1$ be a clone on $A$ (the \emph{source clone}) and let $C_2$ be a clone on $B$ (the \emph{target clone}).
A set $K \subseteq \mathcal{F}_{AB}$ is called a \emph{$(C_1,C_2)$\hyp{}clonoid} if $K C_1 \subseteq K$ and $C_2 K \subseteq K$
($K$ is stable under right composition with $C_1$ and under left composition with $C_2$).
The set $\closys{(C_1,C_2)}$ of all $(C_1,C_2)$\hyp{}clonoids forms a closure system on $\mathcal{F}_{AB}$,
and the least $(C_1,C_2)$\hyp{}clonoid containing a set $F \subseteq \mathcal{F}_{AB}$ is denoted by $\gen[(C_1,C_2)]{F}$.
A $(\clProj{A},\clProj{B})$\hyp{}clonoid is called a \emph{minion} or a \emph{minor\hyp{}closed class}.

We review here a few useful facts about clones, clonoids, and function class composition.

Although the composition of functions is associative, function class composition is not.

\begin{lemma}[{Couceiro, Foldes \cite[Associativity Lemma]{CouFol-2007,CouFol-2009}}]
\label{lem:Associativity}
Let $A$, $B$, $C$, and $D$ be arbitrary nonempty sets, and let $I \subseteq \mathcal{F}_{CD}$, $J \subseteq \mathcal{F}_{BC}$, $K \subseteq \mathcal{F}_{AB}$.
Then the following statements hold.
\begin{enumerate}[label=\upshape{(\roman*)}]
\item $(IJ)K \subseteq I(JK)$.
\item If $J$ is a minion, then $(IJ)K = I(JK)$.
\end{enumerate}
\end{lemma}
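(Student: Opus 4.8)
The plan is to derive both inclusions from the pointwise associativity of ordinary function composition, the only subtlety being the bookkeeping of arities that class composition forces upon us. For \ref{lem:Associativity}(i) this bookkeeping is automatic; for \ref{lem:Associativity}(ii) it is precisely what the minion hypothesis repairs.

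First I would prove (i). A typical element of $(IJ)K$ has the form $i(j_1, \dots, j_p)(k_1, \dots, k_q)$, where $i \in I^{(p)}$, the functions $j_1, \dots, j_p \in J^{(q)}$ all share the arity $q$ (this is the arity of the composite $i(j_1, \dots, j_p) \in (IJ)^{(q)}$), and $k_1, \dots, k_q \in K^{(r)}$ all share the arity $r$. Evaluating at an arbitrary $\vect{a} \in A^r$ and using associativity of composition of single functions, this equals $i\bigl(j_1(k_1, \dots, k_q), \dots, j_p(k_1, \dots, k_q)\bigr)$. Each inner composite $\ell_t := j_t(k_1, \dots, k_q)$ lies in $J^{(q)} K^{(r)} \subseteq JK$ and has arity $r$, so $\ell_1, \dots, \ell_p \in (JK)^{(r)}$ share a common arity; hence $i(\ell_1, \dots, \ell_p) \in I(JK)$. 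The point that makes this effortless is that in $(IJ)K$ the same tuple $(k_1, \dots, k_q)$ is substituted into every $j_t$, so the $\ell_t$ automatically land in $JK$ with equal arities.

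Next I would prove (ii), the inclusion $I(JK) \subseteq (IJ)K$, which is where the real work lies. A typical element of $I(JK)$ is $i(\ell_1, \dots, \ell_p)$ with $i \in I^{(p)}$ and $\ell_1, \dots, \ell_p \in (JK)^{(r)}$, where each $\ell_t = j_t(k^t_1, \dots, k^t_{s_t})$ for some $j_t \in J^{(s_t)}$ and $k^t_1, \dots, k^t_{s_t} \in K^{(r)}$. To recognise this as a member of $(IJ)K$ I must exhibit a \emph{single} block $j'_1, \dots, j'_p$ of equal\hyp{}arity members of $J$ and a \emph{single} tuple $(k_1, \dots, k_N)$ of functions in $K^{(r)}$ with $i(j'_1, \dots, j'_p)(k_1, \dots, k_N) = i(\ell_1, \dots, \ell_p)$. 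The obstruction is that a priori the $j_t$ have different arities $s_t$ and each $\ell_t$ uses its own tuple of $K$\hyp{}functions. I would resolve this by gathering all the functions $k^t_u$ occurring across every $t$ into one list $(k_1, \dots, k_N)$, which is legitimate because they all have the common arity $r$; for each pair $(t,u)$ we then have $k^t_u = k_{\sigma_t(u)}$ for a suitable index $\sigma_t(u) \in \nset{N}$. Setting $j'_t := j_t\bigl(\pr^{(N)}_{\sigma_t(1)}, \dots, \pr^{(N)}_{\sigma_t(s_t)}\bigr)$ yields a function of arity $N$ with $j'_t(k_1, \dots, k_N) = j_t(k_{\sigma_t(1)}, \dots, k_{\sigma_t(s_t)}) = \ell_t$. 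This is exactly the step that uses the hypothesis that $J$ is a minion: closure under right composition with projections, $J \clProj{B} \subseteq J$, guarantees $j'_t \in J^{(N)}$. Now the $j'_t$ share the arity $N$, so $i(j'_1, \dots, j'_p) \in (IJ)^{(N)}$, and by the same pointwise computation as in (i) we obtain $i(j'_1, \dots, j'_p)(k_1, \dots, k_N) = i(\ell_1, \dots, \ell_p)$, placing the latter in $(IJ)K$.

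The only genuine obstacle, and the place where the minion hypothesis is indispensable, is the arity\hyp{}and\hyp{}tuple alignment in (ii): without minor\hyp{}closure one cannot in general pad the $j_t$ to a common arity nor force the $\ell_t$ to be expressed over a shared tuple of $K$\hyp{}functions, which is why (i) cannot be reversed for arbitrary $J$. Everything else reduces to the pointwise identity $f(g_1, \dots, g_n)(\vect{a}) = f(g_1(\vect{a}), \dots, g_n(\vect{a}))$ together with careful tracking of which functions share an arity.
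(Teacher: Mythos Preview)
Your argument is correct; both parts are handled cleanly, and the use of the minion hypothesis in~(ii) to pad the $j_t$ via projections to a common arity is exactly the right move. Note, however, that the paper does not supply its own proof of this lemma: it is quoted as the Associativity Lemma of Couceiro and Foldes with a citation and no argument, so there is nothing in the paper to compare against. Your proof is the standard one and matches what appears in the cited sources.
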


\begin{lemma}[{\cite[Lemma~2.16]{CouLeh-Lcstability}}]
\label{lem:clonmon}
Let $C_1$ and $C'_1$ be clones on $A$ and $C_2$ and $C'_2$ clones on $B$ such that $C_1 \subseteq C'_1$ and $C_2 \subseteq C'_2$.
Then every $(C'_1,C'_2)$\hyp{}clonoid is a $(C_1,C_2)$\hyp{}clonoid.
\end{lemma}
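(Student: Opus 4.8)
The plan is to unpack the two closure conditions defining a $(C'_1,C'_2)$\hyp{}clonoid and to deduce the corresponding conditions for a $(C_1,C_2)$\hyp{}clonoid directly from the monotonicity of function class composition recalled in Subsection~2.2.

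Concretely, I would let $K$ be an arbitrary $(C'_1,C'_2)$\hyp{}clonoid, so that by definition $K C'_1 \subseteq K$ and $C'_2 K \subseteq K$; the goal is to verify the inclusions $K C_1 \subseteq K$ and $C_2 K \subseteq K$. For the right\hyp{}stability condition, the hypothesis $C_1 \subseteq C'_1$ together with monotonicity of composition in its right argument gives $K C_1 \subseteq K C'_1$, and chaining this with $K C'_1 \subseteq K$ yields $K C_1 \subseteq K$. Symmetrically, for the left\hyp{}stability condition, the hypothesis $C_2 \subseteq C'_2$ and monotonicity in the left argument give $C_2 K \subseteq C'_2 K \subseteq K$, whence $C_2 K \subseteq K$. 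These two inclusions are precisely the defining conditions of a $(C_1,C_2)$\hyp{}clonoid, so $K$ is one.

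There is no real obstacle here: the statement is an immediate consequence of the monotonicity property, and the only point requiring any care is to invoke monotonicity correctly in each of its two arguments separately (once in the source and once in the target). Since $K$ was an arbitrary $(C'_1,C'_2)$\hyp{}clonoid, the conclusion holds for every such clonoid, as required.
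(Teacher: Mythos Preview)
Your proof is correct. The paper does not supply its own proof of this lemma; it simply cites \cite[Lemma~2.16]{CouLeh-Lcstability}. Your argument via monotonicity of function class composition is exactly the standard one and matches what the cited reference does.
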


\begin{lemma}[{\cite[Lemma~2.5]{Lehtonen-SM}}]
Let $F \subseteq \mathcal{F}_{AB}$, and let $C_1$ and $C_2$ be clones on $A$ and $B$, respectively.
Then $\gen[(C_1,C_2)]{F} = C_2 ( F C_1 )$.
\end{lemma}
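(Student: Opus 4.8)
The plan is to set $G := C_2(FC_1)$ and to establish the equality $\gen[(C_1,C_2)]{F} = G$ by the two complementary inclusions: first that $G$ is contained in \emph{every} $(C_1,C_2)$\hyp{}clonoid that contains $F$ (hence in the least one, $\gen[(C_1,C_2)]{F}$), and second that $G$ is \emph{itself} a $(C_1,C_2)$\hyp{}clonoid containing $F$ (hence contains the least one). The first inclusion is routine: if $K$ is any $(C_1,C_2)$\hyp{}clonoid with $F \subseteq K$, then monotonicity of function class composition gives $FC_1 \subseteq KC_1 \subseteq K$, and a second application gives $C_2(FC_1) \subseteq C_2 K \subseteq K$; thus $G \subseteq K$, and in particular $G \subseteq \gen[(C_1,C_2)]{F}$. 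For the containment $F \subseteq G$, I would note that since $C_1$ and $C_2$ contain all projections, composing $F$ on the right with the $C_1$\hyp{}projections returns $F$ itself, so $F \subseteq FC_1$, and likewise $FC_1 \subseteq C_2(FC_1) = G$.

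The crux is to verify that $G$ is a $(C_1,C_2)$\hyp{}clonoid, i.e.\ that $C_2 G \subseteq G$ and $G C_1 \subseteq G$. The key preliminary observation is that every clone is a minion: since $C_i C_i \subseteq C_i$ and $C_i$ contains the projections, we have $C_i \clProj{} \subseteq C_i$ and $\clProj{} C_i \subseteq C_i$, so both $C_1$ and $C_2$ qualify as minions and part (ii) of the Associativity Lemma (Lemma~\ref{lem:Associativity}) applies with them in the middle position. For stability under left composition with $C_2$, I would compute $C_2 G = C_2\bigl(C_2(FC_1)\bigr) = (C_2 C_2)(FC_1)$, where the second equality uses Lemma~\ref{lem:Associativity}(ii) with the minion $C_2$ in the middle; then $C_2 C_2 \subseteq C_2$ together with monotonicity yields $(C_2 C_2)(FC_1) \subseteq C_2(FC_1) = G$. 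For stability under right composition with $C_1$, I would compute $G C_1 = \bigl(C_2(FC_1)\bigr)C_1 \subseteq C_2\bigl((FC_1)C_1\bigr)$ by Lemma~\ref{lem:Associativity}(i), and then $(FC_1)C_1 = F(C_1 C_1) \subseteq FC_1$ by Lemma~\ref{lem:Associativity}(ii) applied with the minion $C_1$ in the middle, followed by $C_1 C_1 \subseteq C_1$ and monotonicity; applying monotonicity once more gives $C_2\bigl((FC_1)C_1\bigr) \subseteq C_2(FC_1) = G$.

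The main obstacle is precisely the failure of associativity of function class composition, which is why the two directions above are handled asymmetrically: when the minion sits in the middle (as $C_2$ does in $C_2(C_2(FC_1))$ and as $C_1$ does in $(FC_1)C_1$) part (ii) furnishes an equality, whereas for $G C_1$ the reassociation $\bigl(C_2(FC_1)\bigr)C_1 \subseteq C_2\bigl((FC_1)C_1\bigr)$ is only the one\hyp{}sided inclusion of part (i). The care required is thus in tracking which associativity rebracketing yields an equality and which yields only an inclusion, and in checking that each inclusion points in the direction needed to close the argument. Once these bookkeeping steps are lined up, the clone\hyp{}closure identities $C_i C_i \subseteq C_i$ and monotonicity finish both stability conditions, completing the proof that $G$ is a $(C_1,C_2)$\hyp{}clonoid and hence that $\gen[(C_1,C_2)]{F} = C_2(FC_1)$.
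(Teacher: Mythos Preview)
Your argument is correct. Note, however, that the paper does not supply its own proof of this lemma: it is quoted verbatim from \cite[Lemma~2.5]{Lehtonen-SM} as a preliminary fact, so there is no in\hyp{}paper proof to compare against. What you have written is the standard proof and uses precisely the tools the paper makes available (monotonicity of function class composition and the Associativity Lemma~\ref{lem:Associativity}), with the key observation that clones are minions so that part~(ii) applies whenever a clone occupies the middle slot.
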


\subsection{Boolean functions}
\label{subsec:Bf}

Operations on $\{0,1\}$ are called \emph{Boolean functions}.
We introduce terminology and notation for classes of Boolean functions that will be used later.
Table~\ref{tab:Bfs} defines a few well\hyp{}known Boolean functions:
$\vak{0}$ and $\vak{1}$ (constant functions),
$\id$ (identity),
$\neg$ (negation),
$\wedge$ (conjunction),
$\vee$ (disjunction),
$+$ (addition modulo $2$).
Recall that $\pr_i^{(n)}$ denotes the $i$\hyp{}th $n$\hyp{}ary projection;
thus $\id = \pr_1^{(1)}$.
For $1 \leq i \leq n$, we also let $\neg_i^{(n)} := \neg(\pr_i^{(n)})$, the $i$\hyp{}th $n$\hyp{}ary \emph{negated projection}.

\begin{table}
\begin{tabular}[t]{c|cccc}
$x_1$ & $\vak{0}$ & $\vak{1}$ & $\id$ & $\neg$ \\
\hline
$0$ & $0$ & $1$ & $0$ & $1$ \\
$1$ & $0$ & $1$ & $1$ & $0$
\end{tabular}
\qquad\qquad
\begin{tabular}[t]{cc|ccc}
$x_1$ & $x_2$ & $\wedge$ & $\vee$ & $+$ \\
\hline
$0$ & $0$ & $0$ & $0$ & $0$ \\
$0$ & $1$ & $0$ & $1$ & $1$ \\
$1$ & $0$ & $0$ & $1$ & $1$ \\
$1$ & $1$ & $1$ & $1$ & $0$
\end{tabular}
\caption{Some well\hyp{}known Boolean functions.}
\label{tab:Bfs}
\end{table}

The \emph{complement} of $a \in \{0,1\}$ is $\overline{a} := 1 - a$.
The \emph{complement} of $\vect{a} = (a_1, \dots, a_n) \in \{0,1\}^n$ is $\overline{\vect{a}} := (\overline{a_1}, \dots, \overline{a_n})$.
We regard the set $\{0,1\}$ totally ordered by the natural order $0 < 1$, which induces the direct product order on $\{0,1\}^n$.
The poset $(\{0,1\}^n, \mathord{\leq})$ is a Boolean lattice, i.e., a complemented distributive lattice with least and greatest elements $\vect{0} = (0, \dots, 0)$ and $\vect{1} = (1, \dots, 1)$ and with the map $\vect{a} \mapsto \overline{\vect{a}}$ being the complementation.

The set of all Boolean functions is denoted by $\clAll$.
For $a, b \in \{0,1\}$, let
\begin{align*}
\clIntVal{\clAll}{a}{} &:= \{ \, f \in \clAll \mid f(0, \dots, 0) = a \, \}, \\
\clIntVal{\clAll}{}{b} &:= \{ \, f \in \clAll \mid f(1, \dots, 1) = b \, \}, \\
\clIntVal{\clAll}{a}{b} &:= \clIntVal{\clAll}{a}{} \cap \clIntVal{\clAll}{}{b}.
\end{align*}
Moreover, for any $K \subseteq \clAll$, let
$\clIntVal{K}{a}{} := K \cap \clIntVal{\clAll}{a}{}$,
$\clIntVal{K}{}{b} := K \cap \clIntVal{\clAll}{}{b}$,
$\clIntVal{K}{a}{b} := K \cap \clIntVal{\clAll}{a}{b}$.

For $x \in \{0,1\}$ and $n \in \IN^{+}$, the $n$\hyp{}ary \emph{constant function} taking value $x$ is $\vak{x}^{(n)} \colon \{0,1\}^n \to \{0,1\}$, $\vak{x}^{(n)}(\vect{a}) = x$ for all $\vect{a} \in \{0,1\}^n$.
We will omit the superscript indicating the arity when it is clear from the context or irrelevant.
We denote by $\clVak$ the set of all constant Boolean functions.
We use the shorthands $\clVako := \clIntVal{\clVak}{0}{0}$ and $\clVaki := \clIntVal{\clVak}{1}{1}$.

Let $f \in \clAll^{(n)}$.
The elements of $f^{-1}(1)$ and those of $f^{-1}(0)$ are the \emph{true points} and the \emph{false points} of $f$, respectively.
The \emph{negation} $\overline{f}$, the \emph{inner negation} $f^\mathrm{n}$, and the \emph{dual} $f^\mathrm{d}$ of $f$ are the $n$\hyp{}ary Boolean functions given by the rules
$\overline{f}(\vect{a}) := \overline{f(\vect{a})}$,
$f^\mathrm{n}(\vect{a}) := f(\overline{\vect{a}})$,
and
$f^\mathrm{d}(\vect{a}) := \overline{f(\overline{\vect{a}})}$,
for all $\vect{a} \in \{0,1\}^n$.
We can write these definitions using functional composition as
$\overline{f} := \neg(f)$,
$f^\mathrm{n} := f(\neg_1^{(n)}, \dots, \neg_n^{(n)})$,
and
$f^\mathrm{d} := \neg(f(\neg_1^{(n)}, \dots, \neg_n^{(n)}))$.
A Boolean function $f \in \clAll^{(n)}$ is \emph{self\hyp{}dual} if $f = f^\mathrm{d}$.
We denote by $\clS$ the set of all self\hyp{}dual functions.
For any $C \subseteq \clAll$, let
$\overline{C} := \{ \, \overline{f} \mid f \in C \, \}$,
$C^\mathrm{n} := \{ \, f^\mathrm{n} \mid f \in C \, \}$,
$C^\mathrm{d} := \{ \, f^\mathrm{d} \mid f \in C \, \}$.

A Boolean function $f \in \clAll^{(n)}$ is \emph{monotone} if for all $\vect{a}, \vect{b} \in \{0,1\}^n$, $\vect{a} \leq \vect{b}$ implies $f(\vect{a}) \leq f(\vect{b})$.
The set of all monotone functions is denoted by $\clM$.
We use the shorthand $\clSM := \clS \cap \clM$.

For $k \in \{ \, n \in \IN \mid n \geq 2 \, \} \cup \{\infty\}$,
a function $f \in \clAll^{(n)}$ is
\emph{$1$\hyp{}separating of rank $k$} if for all $T \subseteq f^{-1}(1)$ with $\card{T} \leq k$, it holds that $\bigwedge T \neq \vect{0}$,
and
$f$ is \emph{$0$\hyp{}separating of rank $k$} if for all $F \subseteq f^{-1}(0)$ with $\card{F} \leq k$, it holds that $\bigvee F \neq \vect{1}$.
We denote by $\clUk{k}$ and $\clWk{k}$ the set of all $1$\hyp{}separating functions of rank $k$ and the set of all $0$\hyp{}separating functions of rank $k$, respectively.
We use the shorthands $\clMUk{k} := \clM \cap \clUk{k}$ and $\clMWk{k} := \clM \cap \clWk{k}$.

Let $\clL := \clonegen{\mathord{+}, \vak{1}}$, the clone of \emph{linear functions};
$\clLambda := \clonegen{\mathord{\wedge}, \vak{0}, \vak{1}}$, the clone of conjunctions and constants;
and
$\clV := \clonegen{\mathord{\vee}, \vak{0}, \vak{1}}$, the clone of disjunctions and constants.

We denote the set of all projections, negated projections, and constant functions (the essentially at most unary functions) by $\clOmegaOne$;
the set of all projections and negated projections by $\clIstar$;
the set of all projections and constant functions by $\clI$;
the set of all projections and constant functions taking value $0$ by $\clIo$;
the set of all projections and constant functions taking value $1$ by $\clIi$;
and
the set of all projections by $\clIc$.
(Thus, $\clIo = \clIntVal{\clI}{0}{}$, $\clIi = \clIntVal{\clI}{}{1}$, and $\clIc = \clIntVal{\clI}{0}{1}$.)

The clones on $\{0,1\}$ are well known; they were described by Post~\cite{Post}.
The countably infinite lattice of clones of Boolean functions, also known as \emph{Post's lattice,} is presented in Figure~\ref{fig:Post}.
The notation for these clones was defined above.
However, it should be noted that not all of the classes defined above are clones.

\begin{figure}
\begin{center}
\scalebox{0.31}{%
\tikzstyle{every node}=[circle, draw, fill=black, scale=1, font=\LARGE]
\begin{tikzpicture}[baseline, scale=1]
   \node [label = below:$\clIc$] (Ic) at (0,-1) {};
   \node [label = right:$\clIstar$] (Istar) at (0,0.5) {};
   \node [label = right:$\clIo$] (I0) at (4.5,0.5) {};
   \node [label = left:$\clIi$] (I1) at (-4.5,0.5) {};
   \node [label = below:$\clI$] (I) at (0,2) {};
   \node [label = above:$\clOmegaOne$] (Omega1) at (0,5) {};
   \node [label = below:$\clLc$] (Lc) at (0,7.5) {};
   \node [label = right:$\clLS$] (LS) at (0,9) {};
   \node [label = right:$\clLo$] (L0) at (3,9) {};
   \node [label = left:$\clLi$] (L1) at (-3,9) {};
   \node [label = above:$\clL$] (L) at (0,10.5) {};
   \node [label = below:$\clSM\,\,$] (SM) at (0,13.5) {};
   \node [label = left:$\clSc$] (Sc) at (0,15) {};
   \node [label = above:$\clS$] (S) at (0,16.5) {};
   \node [label = below:$\clMc$] (Mc) at (0,23) {};
   \node [label = left:$\clMo\,\,$] (M0) at (2,24) {};
   \node [label = right:$\,\,\clMi$] (M1) at (-2,24) {};
   \node [label = above:$\clM\,\,$] (M) at (0,25) {};
   \node [label = below:$\clLambdac$] (Lamc) at (7.2,6.7) {};
   \node [label = left:$\clLambdai$] (Lam1) at (5,7.5) {};
   \node [label = right:$\clLambdao$] (Lam0) at (8.7,7.5) {};
   \node [label = below:$\clLambda$] (Lam) at (6.5,8.3) {};
   \node [label = left:$\clMcUk{\infty}$] (McUi) at (7.2,11.5) {};
   \node [label = left:$\clMUk{\infty}$] (MUi) at (8.7,13) {};
   \node [label = right:$\clTcUk{\infty}$] (TcUi) at (10.2,12) {};
   \node [label = right:$\clUk{\infty}$] (Ui) at (11.7,13.5) {};
   \node [label = left:$\clMcUk{3}$] (McU3) at (7.2,16) {};
   \node [label = left:$\clMUk{3}$] (MU3) at (8.7,17.5) {};
   \node [label = right:$\clTcUk{3}$] (TcU3) at (10.2,16.5) {};
   \node [label = right:$\clUk{3}$] (U3) at (11.7,18) {};
   \node [label = left:$\clMcUk{2}\,$] (McU2) at (7.2,19) {};
   \node [label = left:$\clMUk{2}\,$] (MU2) at (8.7,20.5) {};
   \node [label = right:$\clTcUk{2}$] (TcU2) at (10.2,19.5) {};
   \node [label = right:$\clUk{2}$] (U2) at (11.7,21) {};
   \node [label = below:$\clVc$] (Vc) at (-7.2,6.7) {};
   \node [label = right:$\clVo$] (V0) at (-5,7.5) {};
   \node [label = left:$\clVi$] (V1) at (-8.7,7.5) {};
   \node [label = below:$\clV$] (V) at (-6.5,8.3) {};
   \node [label = right:$\clMcWk{\infty}$] (McWi) at (-7.2,11.5) {};
   \node [label = right:$\clMWk{\infty}$] (MWi) at (-8.7,13) {};
   \node [label = left:$\clTcWk{\infty}$] (TcWi) at (-10.2,12) {};
   \node [label = left:$\clWk{\infty}$] (Wi) at (-11.7,13.5) {};
   \node [label = right:$\clMcWk{3}$] (McW3) at (-7.2,16) {};
   \node [label = right:$\clMWk{3}$] (MW3) at (-8.7,17.5) {};
   \node [label = left:$\clTcWk{3}$] (TcW3) at (-10.2,16.5) {};
   \node [label = left:$\clWk{3}$] (W3) at (-11.7,18) {};
   \node [label = right:$\,\,\clMcWk{2}$] (McW2) at (-7.2,19) {};
   \node [label = right:$\clMWk{2}$] (MW2) at (-8.7,20.5) {};
   \node [label = left:$\clTcWk{2}$] (TcW2) at (-10.2,19.5) {};
   \node [label = left:$\clWk{2}$] (W2) at (-11.7,21) {};
   \node [label = above:$\clOI$] (Tc) at (0,28) {};
   \node [label = right:$\clOX$] (T0) at (5,29.5) {};
   \node [label = left:$\clXI$] (T1) at (-5,29.5) {};
   \node [label = above:$\clAll$] (Omega) at (0,31) {};
   \draw [thick] (Ic) -- (Istar) to[out=135,in=-135] (Omega1);
   \draw [thick] (I) -- (Omega1);
   \draw [thick] (Omega1) to[out=135,in=-135] (L);
   \draw [thick] (Ic) -- (I0) -- (I);
   \draw [thick] (Ic) -- (I1) -- (I);
   \draw [thick] (Ic) to[out=128,in=-134] (Lc);
   \draw [thick] (Ic) to[out=58,in=-58] (SM);
   \draw [thick] (I0) -- (L0);
   \draw [thick] (I1) -- (L1);
   \draw [thick] (Istar) to[out=60,in=-60] (LS);
   \draw [thick] (Ic) -- (Lamc);
   \draw [thick] (I0) -- (Lam0);
   \draw [thick] (I1) -- (Lam1);
   \draw [thick] (I) -- (Lam);
   \draw [thick] (Ic) -- (Vc);
   \draw [thick] (I0) -- (V0);
   \draw [thick] (I1) -- (V1);
   \draw [thick] (I) -- (V);
   \draw [thick] (Lamc) -- (Lam0) -- (Lam);
   \draw [thick] (Lamc) -- (Lam1) -- (Lam);
   \draw [thick] (Lamc) -- (McUi);
   \draw [thick] (Lam0) -- (MUi);
   \draw [thick] (Lam1) -- (M1);
   \draw [thick] (Lam) -- (M);
   \draw [thick] (Vc) -- (V0) -- (V);
   \draw [thick] (Vc) -- (V1) -- (V);
   \draw [thick] (Vc) -- (McWi);
   \draw [thick] (V0) -- (M0);
   \draw [thick] (V1) -- (MWi);
   \draw [thick] (V) -- (M);
   \draw [thick] (McUi) -- (TcUi) -- (Ui);
   \draw [thick] (McUi) -- (MUi) -- (Ui);
   \draw [thick,loosely dashed] (McUi) -- (McU3);
   \draw [thick,loosely dashed] (MUi) -- (MU3);
   \draw [thick,loosely dashed] (TcUi) -- (TcU3);
   \draw [thick,loosely dashed] (Ui) -- (U3);
   \draw [thick] (McU3) -- (TcU3) -- (U3);
   \draw [thick] (McU3) -- (MU3) -- (U3);
   \draw [thick] (McU3) -- (McU2);
   \draw [thick] (MU3) -- (MU2);
   \draw [thick] (TcU3) -- (TcU2);
   \draw [thick] (U3) -- (U2);
   \draw [thick] (McU2) -- (TcU2) -- (U2);
   \draw [thick] (McU2) -- (MU2) -- (U2);
   \draw [thick] (McU2) -- (Mc);
   \draw [thick] (MU2) -- (M0);
   \draw [thick] (TcU2) to[out=120,in=-25] (Tc);
   \draw [thick] (U2) -- (T0);
   \draw [thick] (McWi) -- (TcWi) -- (Wi);
   \draw [thick] (McWi) -- (MWi) -- (Wi);
   \draw [thick,loosely dashed] (McWi) -- (McW3);
   \draw [thick,loosely dashed] (MWi) -- (MW3);
   \draw [thick,loosely dashed] (TcWi) -- (TcW3);
   \draw [thick,loosely dashed] (Wi) -- (W3);
   \draw [thick] (McW3) -- (TcW3) -- (W3);
   \draw [thick] (McW3) -- (MW3) -- (W3);
   \draw [thick] (McW3) -- (McW2);
   \draw [thick] (MW3) -- (MW2);
   \draw [thick] (TcW3) -- (TcW2);
   \draw [thick] (W3) -- (W2);
   \draw [thick] (McW2) -- (TcW2) -- (W2);
   \draw [thick] (McW2) -- (MW2) -- (W2);
   \draw [thick] (McW2) -- (Mc);
   \draw [thick] (MW2) -- (M1);
   \draw [thick] (TcW2) to[out=60,in=-155] (Tc);
   \draw [thick] (W2) -- (T1);
   \draw [thick] (SM) -- (McU2);
   \draw [thick] (SM) -- (McW2);
   \draw [thick] (Lc) -- (LS) -- (L);
   \draw [thick] (Lc) -- (L0) -- (L);
   \draw [thick] (Lc) -- (L1) -- (L);
   \draw [thick] (Lc) to[out=120,in=-120] (Sc);
   \draw [thick] (LS) to[out=60,in=-60] (S);
   \draw [thick] (L0) -- (T0);
   \draw [thick] (L1) -- (T1);
   \draw [thick] (L) to[out=125,in=-125] (Omega);
   \draw [thick] (SM) -- (Sc) -- (S);
   \draw [thick] (Sc) to[out=142,in=-134] (Tc);
   \draw [thick] (S) to[out=42,in=-42] (Omega);
   \draw [thick] (Mc) -- (M0) -- (M);
   \draw [thick] (Mc) -- (M1) -- (M);
   \draw [thick] (Mc) to[out=120,in=-120] (Tc);
   \draw [thick] (M0) -- (T0);
   \draw [thick] (M1) -- (T1);
   \draw [thick] (M) to[out=55,in=-55] (Omega);
   \draw [thick] (Tc) -- (T0) -- (Omega);
   \draw [thick] (Tc) -- (T1) -- (Omega);
\end{tikzpicture}
}
\end{center}
\caption{Post's lattice.}
\label{fig:Post}
\end{figure}
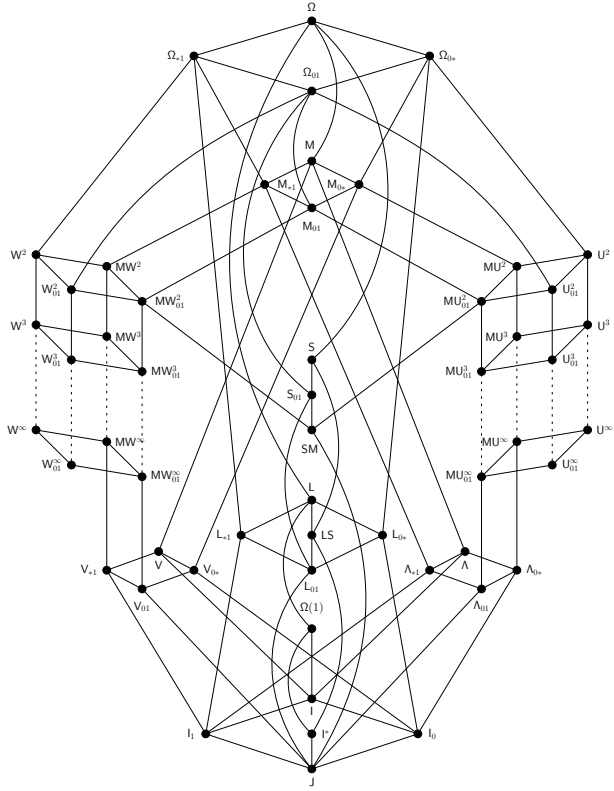

As a notational tool,
for sets $\mathcal{K}$ and $\mathcal{K}'$ of clones on $\{0,1\}$, we denote by $[\mathcal{K}, \mathcal{K}']$ the interval
\[
\{ \, C \mid \text{$C$ is a clone on $\{0,1\}$ such that $K \subseteq C \subseteq K'$ for some $K \in \mathcal{K}$ and $K' \in \mathcal{K}'$} \, \}
\]
in Post's lattice.
We usually simplify this notation when one of the sets $\mathcal{K}$ and $\mathcal{K}'$ is a singleton by dropping set brackets;
thus, for example, we may write
$[\clLc,\clL]$ and $[\{\clSM, \clMcUk{\ell}, \clMcWk{\ell} \},\clAll]$
in place of 
$[\{\clLc\}, \{\clL\}]$ and $[\{\clSM, \clMcUk{\ell}, \clMcWk{\ell} \}, \{\clAll\}]$.


\section{Helpful facts about clonoids}
\label{sec:helpful}

For the needs of subsequent sections, we begin by recalling some auxiliary results and proving a few new ones about $(C_1,C_2)$\hyp{}clonoids and the cardinality of the lattice $\closys{(C_1,C_2)}$ of $(C_1,C_2)$\hyp{}clonoids.

For $c \in B$, let $\clVakaAB{c}{AB}$ be the set of all constant functions in $\mathcal{F}_{AB}$ taking value $c$.
For a subset $S \subseteq B$, let $\clVakaAB{S}{AB} = \bigcup_{c \in S} \clVakaAB{c}{AB}$.
If $A = B$, we write simply $\clVakaAB{c}{A}$ and $\clVakaAB{S}{A}$ for $\clVakaAB{c}{AA}$ and $\clVakaAB{S}{AA}$, respectively,
or we may omit the superscripts if the sets $A$ and $B$ are clear from the context.

\begin{lemma}[{\cite[Lemma~2.10]{Lehtonen-discmono}}]
\label{lem:C1C2Vak}
Let $C_1$ and $C_2$ be clones on $A$ and $B$, respectively, and let $S \subseteq B$.
Assume that $C_2 \cup \clVakaAB{S}{B}$ is a clone on $B$.
\begin{enumerate}[label=\upshape{(\roman*)}]
\item\label{lem:C1C2Vak:i}
If $F \subseteq \mathcal{F}_{AB}$ is a $(C_1,C_2)$\hyp{}clonoid,
then
$F \cup \clVakaAB{S}{AB}$ is a $(C_1, C_2 \cup \clVakaAB{S}{B})$\hyp{}clonoid.
\item\label{lem:C1C2Vak:ii}
The nonempty $(C_1, C_2 \cup \clVakaAB{S}{B})$\hyp{}clonoids are precisely the $(C_1,C_2)$\hyp{}clonoids $K$ satisfying $\clVakaAB{S}{AB} \subseteq K$.
\end{enumerate}
\end{lemma}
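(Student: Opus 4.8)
The plan is to establish \ref{lem:C1C2Vak:i} by verifying directly the two defining closure conditions of a clonoid for $K := F \cup \clVakaAB{S}{AB}$ relative to the source clone $C_1$ and the target clone $C_2' := C_2 \cup \clVakaAB{S}{B}$, and then to derive \ref{lem:C1C2Vak:ii} from \ref{lem:C1C2Vak:i} together with the monotonicity of clonoids (Lemma~\ref{lem:clonmon}). Throughout I use that function class composition distributes over unions, so that each of the products $K C_1$ and $C_2' K$ splits as the union of the products of the respective constituents.

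First I would check stability under right composition. Here $K C_1 = F C_1 \cup \clVakaAB{S}{AB} C_1$, where $F C_1 \subseteq F$ because $F$ is a $(C_1,C_2)$\hyp{}clonoid, and $\clVakaAB{S}{AB} C_1 \subseteq \clVakaAB{S}{AB}$ because composing a constant function $A^n \to B$ of value $s \in S$ with any operations in $C_1$ on $A$ returns the constant function of the same value $s$. Hence $K C_1 \subseteq K$.

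The substantial part is stability under left composition, for which
\[
C_2' K = C_2 F \cup \clVakaAB{S}{B} F \cup C_2 \clVakaAB{S}{AB} \cup \clVakaAB{S}{B} \clVakaAB{S}{AB}.
\]
The term $C_2 F$ is contained in $F$ since $F$ is a $(C_1,C_2)$\hyp{}clonoid, and the two terms beginning with $\clVakaAB{S}{B}$ are immediate: left\hyp{}composing a constant operation on $B$ of value $s \in S$ with any tuple of functions from $A$ to $B$ yields the constant function $A^m \to B$ of value $s$, which lies in $\clVakaAB{S}{AB}$. The one term demanding care --- and the main obstacle --- is $C_2 \clVakaAB{S}{AB}$: for $f \in C_2$ of arity $n$ and constant inputs of values $s_1, \dots, s_n \in S$, the composite is the constant function $A^m \to B$ of value $b := f(s_1, \dots, s_n)$, and $b$ need not lie in $S$. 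This is precisely where the hypothesis that $C_2'$ is a clone enters: composing $f$ with the constant operations of values $s_1, \dots, s_n$ on $B$ stays inside the clone $C_2'$ and equals the constant operation of value $b$ on $B$, so that operation lies in $C_2' = C_2 \cup \clVakaAB{S}{B}$. If it lies in $\clVakaAB{S}{B}$, then $b \in S$ and the composite belongs to $\clVakaAB{S}{AB} \subseteq K$; otherwise the constant operation of value $b$ lies in $C_2$, and left\hyp{}composing it with a suitable member of $F$ places the constant function $A^m \to B$ of value $b$ into $C_2 F \subseteq F \subseteq K$. This last step uses that $F$ is nonempty --- the case of interest, and the feature made explicit in \ref{lem:C1C2Vak:ii} --- and it completes the verification that $C_2' K \subseteq K$.

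Finally, for \ref{lem:C1C2Vak:ii} I would prove the two inclusions. If $K$ is a $(C_1,C_2)$\hyp{}clonoid with $\clVakaAB{S}{AB} \subseteq K$, then $K = K \cup \clVakaAB{S}{AB}$, so applying \ref{lem:C1C2Vak:i} with $F = K$ shows that $K$ is a $(C_1,C_2')$\hyp{}clonoid, and it is nonempty since it contains $\clVakaAB{S}{AB}$. Conversely, let $K$ be a nonempty $(C_1,C_2')$\hyp{}clonoid; as $C_2 \subseteq C_2'$, Lemma~\ref{lem:clonmon} shows that $K$ is a $(C_1,C_2)$\hyp{}clonoid. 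To obtain $\clVakaAB{S}{AB} \subseteq K$, I would fix any $h \in K$ and any $s \in S$: the constant operation of value $s$ on $B$ belongs to $\clVakaAB{S}{B} \subseteq C_2'$, so composing it with $h$ yields a constant function $A \to B$ of value $s$ in $C_2' K \subseteq K$, and right\hyp{}composition with the projections in $C_1$ then supplies such constants of every arity. The two families of clonoids therefore coincide, and the only genuine subtleties are the new constant values contributed by $C_2 \clVakaAB{S}{AB}$, absorbed via the clone hypothesis, and the essential role of nonemptiness in \ref{lem:C1C2Vak:ii}.
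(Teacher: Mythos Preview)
The paper does not prove this lemma itself; it is quoted from \cite{Lehtonen-discmono} without proof, so there is no argument to compare yours against directly, and I comment only on correctness.

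There is a genuine gap in your verification of left stability for part~\ref{lem:C1C2Vak:i}. You assert that function class composition distributes over unions and thereby write
\[
C_2' K = C_2 F \,\cup\, \clVakaAB{S}{B} F \,\cup\, C_2 \clVakaAB{S}{AB} \,\cup\, \clVakaAB{S}{B} \clVakaAB{S}{AB}.
\]
But distributivity holds only on the \emph{outer} side: $(I \cup J)L = IL \cup JL$ is true, whereas $I(J \cup L) = IJ \cup IL$ is false in general, because in a composite $f(g_1,\dots,g_n)$ with each $g_i \in J \cup L$ the inner functions may be drawn from both $J$ and $L$ simultaneously. Your displayed equality is therefore only the inclusion $\supseteq$, and you have not treated the case where $f \in C_2$ and the inner tuple is a genuine mixture of members of $F$ and of $\clVakaAB{S}{AB}$.

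The repair uses the clone hypothesis in essentially the way you already found for the pure\hyp{}constant term. Given $f \in C_2$ of arity $n$ with, say, $g_1,\dots,g_k \in F$ and $g_{k+1},\dots,g_n$ constants of values $s_{k+1},\dots,s_n \in S$, form
\[
f' := f(\pr_1,\dots,\pr_k,\vak{s_{k+1}},\dots,\vak{s_n})
\]
inside the clone $C_2' = C_2 \cup \clVakaAB{S}{B}$. Then $f(g_1,\dots,g_n) = f'(g_1,\dots,g_k)$ and $f' \in C_2'$. If $f' \in \clVakaAB{S}{B}$ the composite lies in $\clVakaAB{S}{AB}$; if $f' \in C_2$ and $k \geq 1$ it lies in $C_2 F \subseteq F$. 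The case $k = 0$ is exactly your pure\hyp{}constant case.

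Your observation that the argument for~\ref{lem:C1C2Vak:i} tacitly requires $F \neq \emptyset$ (in the branch where the absorbed constant operation lands in $C_2$ rather than in $\clVakaAB{S}{B}$) is correct and worth noting; as literally stated, \ref{lem:C1C2Vak:i} can fail for $F = \emptyset$ when $C_2$ already contains a constant operation of some value outside $S$. This degenerate case does not arise in the paper's applications, and your treatment of part~\ref{lem:C1C2Vak:ii} is correct.
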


\begin{proposition}
\label{prop:C2-const}
Let $C_1$ and $C_2$ be clones on $A$ and $B$, respectively, let $S \subseteq B$, and assume that $C_2 \cup \clVakaAB{S}{B}$ is a clone on $B$.
Then the following statements hold.
\begin{enumerate}[label=\upshape{(\roman*)}]
\item $\closys{(C_1, C_2 \cup \clVakaAB{S}{B})}$ is finite if and only if $\closys{(C_1,C_2)}$ is finite.
\item $\closys{(C_1, C_2 \cup \clVakaAB{S}{B})}$ is countably infinite if and only if $\closys{(C_1,C_2)}$ is countably infinite.
\item $\closys{(C_1, C_2 \cup \clVakaAB{S}{B})}$ is uncountable if and only if $\closys{(C_1,C_2)}$ is uncountable.
\end{enumerate}
\end{proposition}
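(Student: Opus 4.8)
Write $C_2' := C_2 \cup \clVakaAB{S}{B}$. The plan is to sandwich $\closys{(C_1,C_2')}$ and $\closys{(C_1,C_2)}$ between one another by two cardinality inequalities that are loose only by a finite additive constant and a finite multiplicative factor, and then read off all three regimes at once. The description of the primed clonoids supplied by Lemma~\ref{lem:C1C2Vak} is exactly what makes this possible.

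For the easy direction I would invoke Lemma~\ref{lem:C1C2Vak}\ref{lem:C1C2Vak:ii}: every nonempty $(C_1,C_2')$\hyp{}clonoid is itself a $(C_1,C_2)$\hyp{}clonoid, and the empty set is the only remaining element of $\closys{(C_1,C_2')}$. Hence $\closys{(C_1,C_2')} \setminus \{\emptyset\} \subseteq \closys{(C_1,C_2)}$, which yields $|\closys{(C_1,C_2')}| \leq |\closys{(C_1,C_2)}| + 1$.

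For the reverse direction I would define a map $\Phi \colon \closys{(C_1,C_2)} \to \closys{(C_1,C_2')} \times \mathcal{P}(S)$ by $\Phi(F) := (F \cup \clVakaAB{S}{AB},\, U_F)$, where $U_F := \{\, c \in S \mid \vak{c} \in F \,\}$; the first coordinate is a $(C_1,C_2')$\hyp{}clonoid by Lemma~\ref{lem:C1C2Vak}\ref{lem:C1C2Vak:i}. The key observation, and the only nonformal step, is that a constant function belongs to $F$ at one arity if and only if it belongs to $F$ at every arity: if $\vak{c}^{(n)} \in F$, then right\hyp{}composing with projections $\pr_1^{(m)}, \dots, \pr_n^{(m)} \in C_1 \supseteq \clProj{A}$ gives $\vak{c}^{(m)} \in F C_1 \subseteq F$ for every $m$. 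Consequently $F \cap \clVakaAB{S}{AB} = \clVakaAB{U_F}{AB}$. From $\Phi(F)$ one therefore recovers both $F \setminus \clVakaAB{S}{AB} = (F \cup \clVakaAB{S}{AB}) \setminus \clVakaAB{S}{AB}$ and $F \cap \clVakaAB{S}{AB} = \clVakaAB{U_F}{AB}$, hence $F$ itself; so $\Phi$ is injective and $|\closys{(C_1,C_2)}| \leq |\closys{(C_1,C_2')}| \cdot 2^{|S|}$.

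Finally, since $S \subseteq B$ and $B = \{0,1\}$ in all cases of interest, the factor $2^{|S|}$ is finite, so the two inequalities bound each cardinality by the other up to a finite additive constant and a finite multiplicative factor. Such modifications preserve each of the properties finite, countably infinite, and uncountable; hence $\closys{(C_1,C_2')}$ is finite (respectively countably infinite, respectively uncountable) precisely when $\closys{(C_1,C_2)}$ is, which gives (i) and (ii) directly and (iii) by the trichotomy of cardinalities. The one point demanding care is the arity\hyp{}independence of constant membership: without it the second coordinate of $\Phi$ would range over the $2^{\aleph_0}$ subsets of the countably infinite set $\clVakaAB{S}{AB}$ rather than over the $2^{|S|}$ subsets of $S$, and the multiplicative factor would no longer be finite, so this is the step I expect to be the crux.
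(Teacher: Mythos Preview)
Your proof is correct and follows essentially the same idea as the paper's: both arguments pivot on Lemma~\ref{lem:C1C2Vak}, on the fact that a $(C_1,C_2)$\hyp{}clonoid contains a constant $\vak{c}$ at one arity iff it contains it at every arity, and on the finiteness of $\mathcal{P}(S)$. The paper packages the hard direction as a pigeonhole over the finite partition $\{\mathcal{A}_T\}_{T \subseteq S}$ with $\mathcal{A}_T = \{F : F \cap \clVakaAB{S}{AB} = \clVakaAB{T}{AB}\}$, whereas you package it as an explicit injection $\Phi$ into $\closys{(C_1,C_2')} \times \mathcal{P}(S)$; these are the same counting argument, and your explicit flagging of the arity\hyp{}independence step and the finite\hyp{}$S$ hypothesis (which the paper uses without comment) is a welcome clarification.
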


\begin{proof}
The three statements will follow by exhaustion if we show that $\closys{(C_1, C_2 \cup \clVakaAB{S}{B})}$ is finite (countably infinite, uncountably infinite, resp.)\ whenever $\closys{(C_1,C_2)}$ is so.

By the monotonicity of function class composition, we have $\card{\closys{(C_1, C_2 \cup \clVakaAB{S}{B})}} \leq \card{\closys{(C_1,C_2)}}$.
From this it follows immediately that $\closys{(C_1, C_2 \cup \clVakaAB{S}{B})}$ is finite whenever $\closys{(C_1,C_2)}$ is finite.

For each $T \subseteq S$, let
\[
\mathcal{A}_T := \{ \, F \in \closys{(C_1,C_2)} \mid F \cap \clVakaAB{S}{AB} = \clVakaAB{T}{AB} \, \}.
\]
Each $\mathcal{A}_T$ is a fragment of $\closys{(C_1,C_2)}$; the nonempty sets of this form constitute a partition of $\closys{(C_1,C_2)}$.

Assume now that $\closys{(C_1,C_2)}$ is (countably, uncountably) infinite.
Since there are only a finite number of subsets of $S$, at least one of the fragments $\mathcal{A}_T$ is (countably, uncountably) infinite, say $\mathcal{A}_D$.
Now, the map $F \mapsto F \cup \clVakaAB{S}{AB}$ is an injection from $\mathcal{A}_D$ to $\mathcal{A}_S$; therefore $\card{\mathcal{A}_D} \leq \card{\mathcal{A}_S}$.
By Lemma~\ref{lem:C1C2Vak}, every member of $\mathcal{A}_S$ is a $(C_1, C_2 \cup \clVakaAB{S}{B})$\hyp{}clonoid.
Therefore $\card{\mathcal{A}_S} \leq \card{\closys{(C_1, C_2 \cup \clVakaAB{S}{B})}} \leq \card{\closys{(C_1,C_2)}} = \card{\mathcal{A}_D} \leq \card{\mathcal{A}_S}$.
Consequently, $\closys{(C_1, C_2 \cup \clVakaAB{S}{B})}$ is (countably, uncountably) infinite whenever $\closys{(C_1,C_2)}$ is so.
\end{proof}

\begin{corollary}
\label{cor:Bfintervals}
Let $C_1$, $C_2$, and $C'_2$ be clones on $\{0,1\}$.
If both $C_2$ and $C'_2$ belong to one of the intervals $[\clIc,\clI]$, $[\clIstar,\clOmegaOne]$, $[\clVc,\clV]$, $[\clLambdac,\clLambda]$, $[\clMcUk{k},\clMUk{k}]$, $[\clMcWk{k},\clMWk{k}]$, for $k \in \{2, 3, \dots, \infty\}$, $[\clMc,\clM]$,
then both $\closys{(C_1,C_2)}$ and $\closys{(C_1,C'_2)}$ are finite, both are countably infinite, or both are uncountable.
\end{corollary}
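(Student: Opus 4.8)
The plan is to reduce the whole statement to repeated applications of Proposition~\ref{prop:C2-const}. The key observation is that each of the listed intervals $[K,K']$ has a distinguished bottom clone $K$, and that every clone lying in the interval is obtained from $K$ by adjoining a set of constant functions: it has the form $K \cup \clVakaAB{S}{B}$ for some $S \subseteq \{0,1\}$ (here $B = \{0,1\}$, and $\clVakaAB{\{0\}}{B} = \clVako$, $\clVakaAB{\{1\}}{B} = \clVaki$, $\clVakaAB{\{0,1\}}{B} = \clVak$). Granting this, the corollary follows by transitivity: it suffices to show that for every clone $C_2$ in a fixed interval the lattice $\closys{(C_1,C_2)}$ has the same cardinality type (finite, countably infinite, or uncountable) as $\closys{(C_1,K)}$; then any two clones $C_2,C_2'$ in the same interval both share the common type of $\closys{(C_1,K)}$, hence agree with each other.

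First I would verify the structural claim by inspecting Post's lattice (Figure~\ref{fig:Post}) together with the definitions in Subsection~\ref{subsec:Bf}. For the four intervals $[\clIc,\clI]$, $[\clVc,\clV]$, $[\clLambdac,\clLambda]$, and $[\clMc,\clM]$, all four clones $K$, $K\cup\clVako$, $K\cup\clVaki$, $K\cup\clVak$ occur, corresponding to $S=\emptyset,\{0\},\{1\},\{0,1\}$; for instance $\clIo=\clIc\cup\clVako$, $\clVi=\clVc\cup\clVaki$, and $\clMo=\clMc\cup\clVako$. For $[\clIstar,\clOmegaOne]$ only $S=\emptyset$ and $S=\{0,1\}$ arise, since $\clOmegaOne=\clIstar\cup\clVak$ while $\clIstar\cup\clVako$ and $\clIstar\cup\clVaki$ are not clones (composing $\neg$ with a constant yields the opposite constant). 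For $[\clMcUk{k},\clMUk{k}]$ only $S=\emptyset$ and $S=\{0\}$ arise: every $1$\hyp{}separating function of rank $k\ge 2$ sends $\vect{0}$ to $0$ (otherwise $T=\{\vect{0}\}$ would violate the separation condition), and the only monotone $1$\hyp{}separating function not preserving $1$ is $\vak{0}$, so $\clMUk{k}=\clMcUk{k}\cup\clVako$; dually $\clMWk{k}=\clMcWk{k}\cup\clVaki$, giving $S\in\{\emptyset,\{1\}\}$. In every case each clone in the interval has the stated form with $K$ the bottom clone.

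With the structural description in hand, I would fix an interval with bottom clone $K$ and apply Proposition~\ref{prop:C2-const} with $C_2:=K$. For any clone $C_2'$ in the interval, write $C_2'=K\cup\clVakaAB{S}{B}$; because $C_2'$ is itself a clone, the hypothesis of the proposition (that $K\cup\clVakaAB{S}{B}$ be a clone) is satisfied, so $\closys{(C_1,C_2')}$ is finite (resp.\ countably infinite, resp.\ uncountable) precisely when $\closys{(C_1,K)}$ is. Since this holds for every clone in the interval, any two of them yield clonoid lattices of the same cardinality type, which is exactly the assertion. The only genuine work is the structural verification in the second paragraph, and its one subtlety is that not every $S\subseteq\{0,1\}$ yields a clone: in $[\clIstar,\clOmegaOne]$, $[\clMcUk{k},\clMUk{k}]$, and $[\clMcWk{k},\clMWk{k}]$ some of the sets $K\cup\clVakaAB{S}{B}$ simply are not clones and do not appear. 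What the argument actually needs is only the direction that every clone in the interval \emph{does} arise in this form, so I would be careful to establish that direction (via the explicit identifications above) rather than attempting to realize all four values of $S$ as clones.
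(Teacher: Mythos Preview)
Your proposal is correct and follows essentially the same approach as the paper: both reduce the corollary to Proposition~\ref{prop:C2-const} by observing that every clone in each listed interval is obtained from the bottom clone by adjoining a set of constants. The paper's proof is terser---it merely records the identities $\clI = \clIc \cup \clVak$, $\clOmegaOne = \clIstar \cup \clVak$, $\clV = \clVc \cup \clVak$, $\clLambda = \clLambdac \cup \clVak$, $\clMUk{k} = \clMcUk{k} \cup \clVako$, $\clMWk{k} = \clMcWk{k} \cup \clVaki$, $\clM = \clMc \cup \clVak$ and leaves implicit that any clone sandwiched between $K$ and $K \cup \clVakaAB{S}{B}$ must itself be of the form $K \cup \clVakaAB{S'}{B}$---whereas you spell out explicitly which intermediate clones occur in each interval and why; this extra care is not strictly necessary but does no harm.
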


\begin{proof}
This follows from Proposition~\ref{prop:C2-const} by observing that
\begin{align*}
\clI &= \clIc \cup \clVak, &
\clOmegaOne &= \clIstar \cup \clVak, &
\clV &= \clVc \cup \clVak, &
\clLambda &= \clLambdac \cup \clVak, \\
\clMUk{k} &= \clMcUk{k} \cup \clVako, &
\clMWk{k} &= \clMcWk{k} \cup \clVaki, &
\clM &= \clMc \cup \clVak. &
&
\qedhere
\end{align*}
\end{proof}

\begin{lemma}
\label{lem:IJ-InJi}
For $I, J \subseteq \clAll$, we have $I J = I^\mathrm{n} \overline{J}$.
\end{lemma}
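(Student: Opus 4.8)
The plan is to reduce the set identity to a single pointwise identity on individual functions and then lift it, exploiting that both inner negation $f \mapsto f^\mathrm{n}$ and negation $g \mapsto \overline{g}$ are arity\hyp{}preserving involutions. Concretely, I would first establish that for every $f \in \clAll^{(n)}$ and all $g_1, \dots, g_n \in \clAll^{(m)}$,
\[
f(g_1, \dots, g_n) = f^\mathrm{n}(\overline{g_1}, \dots, \overline{g_n}).
\]
The intuition is that the outer negations built into each $\overline{g_i}$ feed complemented arguments into $f^\mathrm{n}$, and $f^\mathrm{n}$ re\hyp{}complements them, so the two cancel.

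Second, I would verify this identity pointwise. Fix $\vect{a} \in \{0,1\}^m$. Unravelling the definition of composition and of negation of functions gives $f^\mathrm{n}(\overline{g_1}, \dots, \overline{g_n})(\vect{a}) = f^\mathrm{n}(\overline{g_1(\vect{a})}, \dots, \overline{g_n(\vect{a})})$. Applying the definition $f^\mathrm{n}(\vect{b}) = f(\overline{\vect{b}})$ with $\vect{b} = (\overline{g_1(\vect{a})}, \dots, \overline{g_n(\vect{a})})$ and using $\overline{\overline{a}} = a$ componentwise yields $f(g_1(\vect{a}), \dots, g_n(\vect{a})) = f(g_1, \dots, g_n)(\vect{a})$. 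Since $\vect{a}$ was arbitrary, the two functions coincide.

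Third, I would lift the identity to the sets. Every element of $IJ$ has the form $f(g_1, \dots, g_n)$ with $f \in I^{(n)}$ and $g_1, \dots, g_n \in J^{(m)}$; by the identity it equals $f^\mathrm{n}(\overline{g_1}, \dots, \overline{g_n})$, and since $f^\mathrm{n} \in (I^\mathrm{n})^{(n)}$ and each $\overline{g_i} \in (\overline{J})^{(m)}$, this lies in $I^\mathrm{n}\overline{J}$; hence $IJ \subseteq I^\mathrm{n}\overline{J}$. For the reverse inclusion, a typical element of $I^\mathrm{n}\overline{J}$ is a composite $h(k_1, \dots, k_n)$ with $h \in (I^\mathrm{n})^{(n)}$ and $k_1, \dots, k_n \in (\overline{J})^{(m)}$; here $h = f^\mathrm{n}$ for a unique $f \in I^{(n)}$ and $k_i = \overline{g_i}$ for unique $g_i \in J^{(m)}$, so again by the identity it equals $f(g_1, \dots, g_n) \in IJ$.

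The only point requiring care---and it is hardly an obstacle---is the bookkeeping in the last step: one must use that inner negation and negation preserve arity and are bijective on each $\clAll^{(n)}$ (being involutions, $(f^\mathrm{n})^\mathrm{n} = f$ and $\overline{\overline{g}} = g$), so that the pairs $(h, (k_i))$ ranging over $(I^\mathrm{n})^{(n)} \times ((\overline{J})^{(m)})^n$ correspond exactly to the pairs $(f, (g_i))$ ranging over $I^{(n)} \times (J^{(m)})^n$. This guarantees that both inclusions are taken over matching index sets, and the equality follows.
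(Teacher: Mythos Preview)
Your proof is correct and follows essentially the same approach as the paper: both establish the pointwise identity $f(g_1,\dots,g_n)=f^{\mathrm{n}}(\overline{g_1},\dots,\overline{g_n})$ by inserting a double complement and invoking the definition of inner negation, then lift it to the class equality. The only cosmetic difference is that the paper obtains the reverse inclusion by applying the already\hyp{}proved inclusion with $I^{\mathrm{n}},\overline{J}$ in place of $I,J$ (using $(I^{\mathrm{n}})^{\mathrm{n}}=I$ and $\overline{\overline{J}}=J$), whereas you spell out the bijection explicitly.
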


\begin{proof}
Let $f \in I J$.
Then $f = g ( h_1, \dots, h_n )$ for $g \in I$, $h_1, \dots, h_n \in J$.
Now,
\begin{align*}
f(\vect{a})
= g ( h_1(\vect{a}), \dots, h_n(\vect{a}) )
= g ( \overline{\overline{h_1(\vect{a})}}, \dots, \overline{\overline{h_n(\vect{a})}} )
= g^\mathrm{n} ( \overline{h_1}(\vect{a}), \dots, \overline{h_n}(\vect{a}) ),
\end{align*}
and therefore $f = g^\mathrm{n} ( \overline{h_1}, \dots, \overline{h_n} ) \in I^\mathrm{n} \overline{J}$.
This shows that $I J \subseteq I^\mathrm{n} \overline{J}$.
Because $(I^\mathrm{n})^\mathrm{n} = I$ and $\overline{\overline{J}} = J$, it also follows from the above that $I^\mathrm{n} \overline{J} \subseteq I J$.
\end{proof}

\begin{proposition}
\label{prop:Knid}
Let $C_1$ and $C_2$ be clones on $\{0,1\}$, and let $K \subseteq \clAll$.
Then the following statements are equivalent:
\begin{enumerate}[label=\upshape{(\alph*)}]
\item\label{prop:Knid:K} $K$ is a $(C_1,C_2)$\hyp{}clonoid.
\item\label{prop:Knid:Kn} $K^\mathrm{n}$ is a $(C_1^\mathrm{d},C_2)$\hyp{}clonoid.
\item\label{prop:Knid:Ki} $\overline{K}$ is a $(C_1,C_2^\mathrm{d})$\hyp{}clonoid.
\item\label{prop:Knid:Kd} $K^\mathrm{d}$ is a $(C_1^\mathrm{d},C_2^\mathrm{d})$\hyp{}clonoid.
\end{enumerate}
\end{proposition}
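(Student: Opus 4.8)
The plan is to deduce the fourfold equivalence from two ``atomic'' equivalences, namely (a)$\Leftrightarrow$(b) and (a)$\Leftrightarrow$(c), and then to obtain (d) by chaining them. Everything rests on a short calculus for how the three unary operations $f \mapsto \overline{f}$, $f \mapsto f^\mathrm{n}$, $f \mapsto f^\mathrm{d}$ interact with function class composition, so I would record the needed identities first. Each operation is an involution, both on $\clAll$ and on clones; in particular $(C_i^\mathrm{d})^\mathrm{d} = C_i$, and I use the standard fact that the dual of a clone is again a clone, so that all four statements are well posed. Negation and inner negation commute, and $f^\mathrm{d} = \overline{f^\mathrm{n}} = (\overline{f})^\mathrm{n}$, whence $K^\mathrm{d} = \overline{K^\mathrm{n}} = (\overline{K})^\mathrm{n}$ at the level of sets. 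Finally, negation acts ``on the outer factor'' and inner negation ``on the inner factors'' of a composite: from $\overline{f(g_1,\dots,g_n)} = \overline{f}(g_1,\dots,g_n)$ and $(f(g_1,\dots,g_n))^\mathrm{n} = f(g_1^\mathrm{n},\dots,g_n^\mathrm{n})$ I obtain the class identities $\overline{FG} = \overline{F}\,G$ and $(FG)^\mathrm{n} = F\,G^\mathrm{n}$ for all $F, G \subseteq \clAll$. I would also note the two elementary computations $(q^\mathrm{d})^\mathrm{n} = \overline{q}$ and $\overline{p^\mathrm{d}} = p^\mathrm{n}$, which give $(C_1^\mathrm{d})^\mathrm{n} = \overline{C_1}$ and $\overline{C_2^\mathrm{d}} = C_2^\mathrm{n}$.

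For (a)$\Leftrightarrow$(b), inner negation should toggle the source clone to its dual while fixing the target. Assuming $K$ is a $(C_1,C_2)$\hyp{}clonoid, I verify the two closure conditions for $K^\mathrm{n}$ by applying $\cdot^\mathrm{n}$ and simplifying. On the target side, $(C_2 K^\mathrm{n})^\mathrm{n} = C_2 (K^\mathrm{n})^\mathrm{n} = C_2 K \subseteq K$, so $C_2 K^\mathrm{n} \subseteq K^\mathrm{n}$. On the source side, $(K^\mathrm{n} C_1^\mathrm{d})^\mathrm{n} = K^\mathrm{n} (C_1^\mathrm{d})^\mathrm{n} = K^\mathrm{n} \overline{C_1}$, which by Lemma~\ref{lem:IJ-InJi} equals $K C_1 \subseteq K$, so $K^\mathrm{n} C_1^\mathrm{d} \subseteq K^\mathrm{n}$. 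Thus (a)$\Rightarrow$(b); applying this implication to $K^\mathrm{n}$, whose source clone is $C_1^\mathrm{d}$, and using the involutions $(K^\mathrm{n})^\mathrm{n} = K$ and $(C_1^\mathrm{d})^\mathrm{d} = C_1$ yields the converse.

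The equivalence (a)$\Leftrightarrow$(c) is entirely parallel, with negation toggling the target clone instead. Here $\overline{\overline{K} C_1} = \overline{\overline{K}}\,C_1 = K C_1 \subseteq K$ handles the source condition for $\overline{K}$, while $\overline{C_2^\mathrm{d}\,\overline{K}} = \overline{C_2^\mathrm{d}}\,\overline{K} = C_2^\mathrm{n}\,\overline{K} = C_2 K \subseteq K$ (again by Lemma~\ref{lem:IJ-InJi}) handles the target condition; the converse follows as before by involution. Finally, (d) is obtained by composition: since $K^\mathrm{d} = (\overline{K})^\mathrm{n}$, statement (a) gives (c) for $\overline{K}$ as a $(C_1, C_2^\mathrm{d})$\hyp{}clonoid, and then (a)$\Leftrightarrow$(b) applied with target clone $C_2^\mathrm{d}$ turns $\overline{K}$ into $(\overline{K})^\mathrm{n} = K^\mathrm{d}$ as a $(C_1^\mathrm{d}, C_2^\mathrm{d})$\hyp{}clonoid.

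I expect the only real subtlety to be bookkeeping: keeping track of which transform toggles which clone. The guiding principle is that inner negation pairs with the inner/source side (turning $C_1$ into $C_1^\mathrm{d}$) and negation pairs with the outer/target side (turning $C_2$ into $C_2^\mathrm{d}$), while the ``cross terms'' $K^\mathrm{n} C_1^\mathrm{d}$ and $C_2^\mathrm{d}\,\overline{K}$ are precisely the places where Lemma~\ref{lem:IJ-InJi} is needed to convert them back to $K C_1$ and $C_2 K$. Once the class identities above are in place, each closure condition reduces to a one\hyp{}line calculation, so no step should present a genuine obstacle beyond this careful tracking of sides.
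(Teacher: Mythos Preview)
Your proposal is correct and follows essentially the same route as the paper: both arguments hinge on Lemma~\ref{lem:IJ-InJi} together with the class identities $\overline{FG} = \overline{F}\,G$ and $(FG)^\mathrm{n} = F\,G^\mathrm{n}$, and both close the equivalences by invoking the involutive nature of the transforms. The only cosmetic difference is that the paper packages the computation via the fixed\hyp{}point description $K = C_2(KC_1)$ and transforms that single equation, whereas you verify the two closure conditions $K^\mathrm{n} C_1^\mathrm{d} \subseteq K^\mathrm{n}$ and $C_2 K^\mathrm{n} \subseteq K^\mathrm{n}$ separately; the underlying algebra is identical.
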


\begin{proof}
\ref{prop:Knid:K} $\implies$ \ref{prop:Knid:Kn}:
Assume that $K$ is a $(C_1,C_2)$\hyp{}clonoid.
This is equivalent to $K = \gen[(C_1,C_2)]{K} = C_2 ( K C_1 )$.
By Lemma~\ref{lem:IJ-InJi} we get
\begin{align*}
& K^\mathrm{n} = ( C_2 ( K C_1 ) )^\mathrm{n} = C_2 ( K C_1 )^\mathrm{n} = C_2 ( K C_1^\mathrm{n} ) = C_2 ( K^\mathrm{n} \overline{C_1^\mathrm{n}} ) = C_2 ( K^\mathrm{n} C_1^\mathrm{d} ), \\
& \overline{K} = \overline{ C_2 ( K C_1 ) } = \overline{C_2} ( K C_1 ) = (\overline{C_2})^\mathrm{n} \overline{ ( K C_1 ) } = C_2^\mathrm{d} \overline{ ( K C_1 ) } = C_2^\mathrm{d} ( \overline{K} C_1 ), \\
& K^\mathrm{d} = \overline{ K^\mathrm{n} } = \overline{  C_2 ( K^\mathrm{n} C_1^\mathrm{d} ) } = \overline{C_2} ( K^\mathrm{n} C_1^\mathrm{d} ) = (\overline{C_2})^\mathrm{n} \overline{( K^\mathrm{n} C_1^\mathrm{d} )} = C_2^\mathrm{d} ( \overline{K^\mathrm{n}} C_1^\mathrm{d} ) = C_2^\mathrm{d} ( K^\mathrm{d} C_1^\mathrm{d} ).
\end{align*}
This shows that statement \ref{prop:Knid:K} implies the others.
To show the remaining implications, we just consider $K^\mathrm{n}$, $\overline{K}$, or $K^\mathrm{d}$ in place of $K$ and $C_1^\mathrm{d}$ and $C_2^\mathrm{d}$ in place of $C_1$ and $C_2$, respectively.
\end{proof}

\begin{corollary}
\label{cor:duals}
Let $C_1$ and $C_2$ be clones on $\{0,1\}$.
The lattices $\closys{(C_1,C_2)}$, $\closys{(C_1,C_2^\mathrm{d})}$, $\closys{(C_1^\mathrm{d},C_2)}$, and $\closys{(C_1^\mathrm{d},C_2^\mathrm{d})}$ have the same cardinality.
\end{corollary}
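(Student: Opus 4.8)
The plan is to deduce the corollary directly from the biconditionals of Proposition~\ref{prop:Knid} by observing that each of the three transformations $K \mapsto K^\mathrm{n}$, $K \mapsto \overline{K}$, and $K \mapsto K^\mathrm{d}$ is an involution on the power set of $\clAll$, and that clone dualization $C \mapsto C^\mathrm{d}$ is an involution on the clones of $\{0,1\}$. Once these involution properties are in hand, each equivalence in Proposition~\ref{prop:Knid} promotes to a cardinality\hyp{}preserving bijection between two of the four clonoid lattices, and chaining the resulting equalities gives the claim.

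First I would record the (routine) verifications that inner negation, negation, and dualization of Boolean functions are self\hyp{}inverse: for every $f$ one has $(f^\mathrm{n})^\mathrm{n} = f$, $\overline{\overline{f}} = f$, and $(f^\mathrm{d})^\mathrm{d} = f$, directly from the definitions $f^\mathrm{n}(\vect{a}) = f(\overline{\vect{a}})$, $\overline{f}(\vect{a}) = \overline{f(\vect{a})}$, and $f^\mathrm{d}(\vect{a}) = \overline{f(\overline{\vect{a}})}$, using $\overline{\overline{a}} = a$. These identities extend setwise, so the three maps on subsets of $\clAll$ are bijections equal to their own inverses; likewise $(C^\mathrm{d})^\mathrm{d} = C$ for every clone $C$.

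Next I would argue that $K \mapsto K^\mathrm{n}$ carries $\closys{(C_1,C_2)}$ bijectively onto $\closys{(C_1^\mathrm{d},C_2)}$. The implication \ref{prop:Knid:K}~$\Rightarrow$~\ref{prop:Knid:Kn} of Proposition~\ref{prop:Knid} shows that the image of a $(C_1,C_2)$\hyp{}clonoid under this map is a $(C_1^\mathrm{d},C_2)$\hyp{}clonoid. For surjectivity (and that the inverse lands in the correct lattice), I would apply the same proposition with $C_1^\mathrm{d}$ in place of $C_1$: since $(C_1^\mathrm{d})^\mathrm{d} = C_1$, it gives that $L \mapsto L^\mathrm{n}$ sends every $(C_1^\mathrm{d},C_2)$\hyp{}clonoid $L$ to a $(C_1,C_2)$\hyp{}clonoid, and by the involution property this is a two\hyp{}sided inverse of the first map. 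Hence $\card{\closys{(C_1,C_2)}} = \card{\closys{(C_1^\mathrm{d},C_2)}}$. The maps $K \mapsto \overline{K}$ and $K \mapsto K^\mathrm{d}$ are treated identically, using implications \ref{prop:Knid:K}~$\Rightarrow$~\ref{prop:Knid:Ki} and \ref{prop:Knid:K}~$\Rightarrow$~\ref{prop:Knid:Kd} together with $(C_2^\mathrm{d})^\mathrm{d} = C_2$, to yield $\card{\closys{(C_1,C_2)}} = \card{\closys{(C_1,C_2^\mathrm{d})}}$ and $\card{\closys{(C_1,C_2)}} = \card{\closys{(C_1^\mathrm{d},C_2^\mathrm{d})}}$.

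There is no genuine obstacle here; the proof is pure bookkeeping. The only point that deserves care is that a biconditional of the form ``$K$ is a $(C_1,C_2)$\hyp{}clonoid if and only if $K^\mathrm{n}$ is a $(C_1^\mathrm{d},C_2)$\hyp{}clonoid'' by itself only yields that the involution maps one lattice \emph{into} the other; to obtain an honest bijection one must explicitly invoke the involution identity $(K^\mathrm{n})^\mathrm{n} = K$ (equivalently, re\hyp{}apply Proposition~\ref{prop:Knid} to the dual source clone), so that the same map serves as its own inverse.
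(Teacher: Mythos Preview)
Your proposal is correct and takes exactly the same approach as the paper: the paper's proof consists of the single sentence ``This follows immediately from Proposition~\ref{prop:Knid},'' and what you have written is precisely the routine unpacking of that sentence via the involution properties of $K \mapsto K^\mathrm{n}$, $K \mapsto \overline{K}$, and $K \mapsto K^\mathrm{d}$.
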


\begin{proof}
This follows immediately from Proposition~\ref{prop:Knid}.
\end{proof}

\begin{lemma}[{\cite[Lemma~3.2]{Lehtonen-discmono}}]
\label{lem:DM-3.2}
Let $C$ be a clone on $\{0,1\}$, and let $K$ be a $(C,\clIc)$\hyp{}clonoid.
Then the following statements hold.
\begin{enumerate}[label=\upshape{(\alph*)}]
\item $\overline{K}$ is a $(C,\clIc)$\hyp{}clonoid.
\item $K \cup \clVako$ is a $(C,\clIo)$\hyp{}clonoid.
\item $K \cup \clVaki$ is a $(C,\clIi)$\hyp{}clonoid.
\item $K \cup \clVak$ is a $(C,\clI$\hyp{}clonoid.
\item $K \cup \overline{K}$ is a $(C,\clIstar)$\hyp{}clonoid.
\item $K \cup \overline{K} \cup \clVak$ is a $(C,\clOmegaOne)$\hyp{}clonoid.
\end{enumerate}
\end{lemma}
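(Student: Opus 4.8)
The plan is to derive all six statements from the duality result Proposition~\ref{prop:Knid} and the ``adding constants'' result Lemma~\ref{lem:C1C2Vak}, together with one short hands-on computation. The key preliminary observation is that left composition with a projection merely selects one of the argument functions, so the condition $\clIc L \subseteq L$ holds automatically for \emph{every} $L \subseteq \clAll$; hence a set $L$ is a $(C,\clIc)$-clonoid if and only if $L C \subseteq L$. I would also record the decompositions $\clIo = \clIc \cup \clVako$, $\clIi = \clIc \cup \clVaki$, $\clI = \clIc \cup \clVak$, and $\clOmegaOne = \clIstar \cup \clVak$, each right-hand side being a clone (they all occur in Post's lattice), so that the hypotheses of Lemma~\ref{lem:C1C2Vak} are satisfied.

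For statement~(a) I would note that the dual of each projection is itself, so $\clIc^\mathrm{d} = \clIc$; applying the equivalence of \ref{prop:Knid:K} and \ref{prop:Knid:Ki} in Proposition~\ref{prop:Knid} with $C_2 = \clIc$ then yields immediately that $\overline{K}$ is a $(C,\clIc)$-clonoid. (Alternatively one checks directly that $\overline{f}(g_1, \dots, g_n) = \overline{f(g_1, \dots, g_n)}$, whence $\overline{K}\,C = \overline{KC} \subseteq \overline{K}$.) For statements~(b), (c), (d) I would apply Lemma~\ref{lem:C1C2Vak}\ref{lem:C1C2Vak:i} with source clone $C_1 = C$, target clone $C_2 = \clIc$, and $S = \{0\}$, $\{1\}$, $\{0,1\}$ respectively; in the Boolean setting the indexed set of constants is then $\clVako$, $\clVaki$, $\clVak$, and $\clIc \cup \clVako = \clIo$, $\clIc \cup \clVaki = \clIi$, $\clIc \cup \clVak = \clI$ are clones, so the lemma delivers that $K \cup \clVako$, $K \cup \clVaki$, $K \cup \clVak$ are $(C,\clIo)$-, $(C,\clIi)$-, $(C,\clI)$-clonoids.

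Statement~(e) is the one spot needing a direct argument. Right composition is immediate: $(K \cup \overline{K}) C = KC \cup \overline{K}\,C \subseteq K \cup \overline{K}$, using the hypothesis on $K$ together with~(a), and the fact that composition distributes over the union in its first argument. For left composition with $\clIstar$, projections again cause no trouble, while for a negated projection $\neg_i^{(n)}$ and any $h_1, \dots, h_n \in K \cup \overline{K}$ one has $\neg_i^{(n)}(h_1, \dots, h_n) = \overline{h_i}$; since $\overline{\overline{K}} = K$, this complement lies in $K \cup \overline{K}$ whether $h_i \in K$ or $h_i \in \overline{K}$. Hence $\clIstar (K \cup \overline{K}) \subseteq K \cup \overline{K}$ and (e) follows. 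Finally, for~(f) I would feed the $(C,\clIstar)$-clonoid $K \cup \overline{K}$ from~(e) into Lemma~\ref{lem:C1C2Vak}\ref{lem:C1C2Vak:i} with $S = \{0,1\}$, using $\clOmegaOne = \clIstar \cup \clVak$, to conclude that $(K \cup \overline{K}) \cup \clVak$ is a $(C,\clOmegaOne)$-clonoid.

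The only genuine content, and so the main thing to get right, is statement~(e): one must verify both that $K \cup \overline{K}$ remains closed under right composition with $C$ (which works precisely because composition distributes over the union in its first coordinate) and that left composition with negated projections is absorbed thanks to the involution $\overline{\overline{K}} = K$. Everything else is a bookkeeping matter of exhibiting each target clone as a smaller clone together with a suitable family of constants, so that Lemma~\ref{lem:C1C2Vak} applies; I would in particular double-check that $\clIstar$ and $\clOmegaOne$ are genuinely clones closed under composition, which is exactly what underwrites the decomposition hypotheses.
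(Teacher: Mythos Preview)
Your proof is correct. The paper does not actually prove this lemma; it is quoted from \cite{Lehtonen-discmono} without proof, so there is nothing to compare against directly. Your approach---using Proposition~\ref{prop:Knid} for~(a), Lemma~\ref{lem:C1C2Vak} for~(b), (c), (d), (f), and a short direct verification for~(e)---is entirely sound, and the preliminary observation that $\clIc L \subseteq L$ for every $L$ is the right way to reduce the $(C,\clIc)$-clonoid condition to right stability alone.
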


\begin{lemma}[{\cite[Lemma~3.3]{Lehtonen-discmono}}]
\label{lem:DM-3.3}
Let $C$ be a clone on $\{0,1\}$.
The $(C,\clIstar)$\hyp{}clonoids are precisely the $(C,\clIc)$\hyp{}clonoids $K$ satisfying $K = \overline{K}$.
\end{lemma}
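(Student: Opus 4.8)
The plan is to reduce the whole statement to the single observation that
\[
\clIstar K = K \cup \overline{K} \quad \text{for every } K \subseteq \clAll.
\]
First I would record that $\clIstar$ consists exactly of the projections $\pr_i^{(n)}$ together with the negated projections $\neg_i^{(n)} = \neg(\pr_i^{(n)})$, so that $\clIstar = \clIc \cup \overline{\clIc}$. Composing a projection $\pr_i^{(n)}$ with an $n$-tuple $(h_1, \dots, h_n)$ of equal-arity members of $K$ yields $h_i \in K$, while composing the negated projection $\neg_i^{(n)}$ with the same tuple yields $\overline{h_i} \in \overline{K}$; conversely, each $h \in K$ and each $\overline{h} \in \overline{K}$ is already produced by the unary projection $\pr_1^{(1)}$ and the unary negated projection $\neg_1^{(1)}$, respectively. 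Hence $\clIc K = K$ and $\overline{\clIc} K = \overline{K}$, and therefore $\clIstar K = K \cup \overline{K}$.

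With this identity in hand, both implications become short. For the forward direction, suppose $K$ is a $(C,\clIstar)$-clonoid. Since $\clIc \subseteq \clIstar$, Lemma~\ref{lem:clonmon} immediately gives that $K$ is also a $(C,\clIc)$-clonoid. Moreover, stability under left composition forces $\overline{K} \subseteq K \cup \overline{K} = \clIstar K \subseteq K$; applying the complementation involution to $\overline{K} \subseteq K$ then yields $K \subseteq \overline{K}$, so $K = \overline{K}$. For the converse, let $K$ be a $(C,\clIc)$-clonoid with $K = \overline{K}$. The condition $K C \subseteq K$ is inherited directly from $K$ being a $(C,\clIc)$-clonoid, and $\clIstar K = K \cup \overline{K} = K \subseteq K$, so $K$ is stable under left composition with $\clIstar$ as well; hence $K$ is a $(C,\clIstar)$-clonoid.

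The only point demanding genuine care is the arity bookkeeping behind the identity $\clIstar K = K \cup \overline{K}$: I must verify that the unary projection and unary negated projection already generate all of $K$ and all of $\overline{K}$, so that function-class composition with $\clIstar$ introduces no spurious elements of differing arity beyond $K \cup \overline{K}$. Everything else is purely formal, and the empty clonoid is handled automatically since $\overline{\emptyset} = \emptyset$ satisfies $K = \overline{K}$. As an alternative to spelling out the computation, one could instead invoke Lemma~\ref{lem:DM-3.2}\,(e), which already asserts that $K \cup \overline{K}$ is a $(C,\clIstar)$-clonoid whenever $K$ is a $(C,\clIc)$-clonoid, but the direct argument above is self-contained and slightly cleaner.
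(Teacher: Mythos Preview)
Your proof is correct. The paper does not supply its own proof of this lemma; it merely quotes the statement from \cite[Lemma~3.3]{Lehtonen-discmono} and uses it as a black box. Your argument via the identity $\clIstar K = K \cup \overline{K}$ is the natural one and would serve perfectly well as a self\hyp{}contained justification; the arity bookkeeping you flag is genuine but entirely routine, since every element of $\clIstar$ is either some $\pr_i^{(n)}$ or some $\neg_i^{(n)}$ and composing either with an $n$\hyp{}tuple from $K^{(m)}$ lands in $K^{(m)} \cup \overline{K}^{(m)}$.
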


\begin{lemma}
\label{lem:a-pres}
Let $a \in \{0,1\}$, let $K \subseteq \clIntVal{\clAll}{a}{}$, and let $C_1$ be a subclone of $\clOX$.
When $a = 0$, let $C_2$ be a subclone of $\clOX$; when $a = 1$, let $C_2$ be a subclone of $\clXI$.
Then the following statements hold.
\begin{enumerate}[label=\upshape{(\roman*)}]
\item\label{lem:a-pres:1} $K C_1 \subseteq \clIntVal{\clAll}{a}{}$.
\item\label{lem:a-pres:2} $\gen[(C_1,C_2)]{K} \subseteq \clIntVal{\clAll}{a}{}$.
\end{enumerate}
\end{lemma}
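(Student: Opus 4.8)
The plan is to prove both statements by direct evaluation at the all\hyp{}zeros tuple, exploiting the hypothesis that the clones in play preserve the constant $0$ (or $1$). Throughout, I write $\vect{0}$ and $\vect{1}$ for the all\hyp{}zeros and all\hyp{}ones tuples of the appropriate arity.

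For part~\ref{lem:a-pres:1}, I would take an arbitrary composite $f(g_1, \dots, g_n)$ with $f \in K^{(n)}$ and $g_1, \dots, g_n \in C_1^{(m)}$, and evaluate it at $\vect{0} \in \{0,1\}^m$. Since $C_1 \subseteq \clOX$, each $g_i$ is $0$\hyp{}preserving, so $g_i(\vect{0}) = 0$; hence $f(g_1, \dots, g_n)(\vect{0}) = f(0, \dots, 0) = a$, where the last equality holds because $f \in K \subseteq \clIntVal{\clAll}{a}{}$. Thus every such composite again lies in $\clIntVal{\clAll}{a}{}$, giving $K C_1 \subseteq \clIntVal{\clAll}{a}{}$.

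For part~\ref{lem:a-pres:2}, I would invoke the description $\gen[(C_1,C_2)]{K} = C_2(K C_1)$ of the generated clonoid recalled earlier, so that it suffices to show $C_2(K C_1) \subseteq \clIntVal{\clAll}{a}{}$. By part~\ref{lem:a-pres:1} the inner set $K C_1$ is already contained in $\clIntVal{\clAll}{a}{}$, so I would take an arbitrary composite $g(h_1, \dots, h_n)$ with $g \in C_2^{(n)}$ and $h_1, \dots, h_n \in (K C_1)^{(m)}$ and evaluate at $\vect{0} \in \{0,1\}^m$. Each $h_i(\vect{0}) = a$, so the composite takes the value $g(a, \dots, a)$ at $\vect{0}$. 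Here the case distinction on $a$ enters: when $a = 0$, the inner tuple is $\vect{0}$ and $g \in C_2 \subseteq \clOX$ forces $g(\vect{0}) = 0 = a$; when $a = 1$, the inner tuple is $\vect{1}$ and $g \in C_2 \subseteq \clXI$ forces $g(\vect{1}) = 1 = a$. In either case the composite lies in $\clIntVal{\clAll}{a}{}$, which completes the argument.

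I expect no genuine obstacle: the whole proof is a routine evaluation at a single point. The only step that demands care is matching the hypothesis on the target clone $C_2$ to the value $a$, since one needs $C_2$ to preserve precisely the constant $a$ in order to conclude $g(a, \dots, a) = a$; this is exactly why the statement imposes $C_2 \subseteq \clOX$ when $a = 0$ and $C_2 \subseteq \clXI$ when $a = 1$.
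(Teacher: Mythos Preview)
Your proposal is correct and follows essentially the same approach as the paper's own proof: direct evaluation at $\vect{0}$, using $C_1 \subseteq \clOX$ for part~\ref{lem:a-pres:1} and the description $\gen[(C_1,C_2)]{K} = C_2(K C_1)$ together with the hypothesis on $C_2$ for part~\ref{lem:a-pres:2}. If anything, your write-up is slightly more explicit about the case distinction on $a$ in the final step.
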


\begin{proof}
\ref{lem:a-pres:1}
Let $f \in K C_1$.
Then $f = g(h_1, \dots, h_n)$ for some $g \in K$ and $h_1, \dots, h_n \in C_1$.
We have
\[
f(\vect{0}) = g(h_1(\vect{0}), \dots, h_n(\vect{0})) = g(a, \dots, a) = a,
\]
so $f \in \clIntVal{\clAll}{a}{}$.

\ref{lem:a-pres:2}
Let $f \in \gen[(C_1,C_2)]{K} = C_2 ( K C_1 )$.
Then $f = \varphi(\gamma_1, \dots, \gamma_n)$ for some $\varphi \in C_1$ and $\gamma_1, \dots, \gamma_n \in K C_1$.
By \ref{lem:a-pres:1}, $\gamma_1, \dots, \gamma_n \in \clIntVal{\clAll}{a}{}$.
We have
\[
f(\vect{0}) = \varphi(\gamma_1(\vect{0}), \dots, \gamma_n(\vect{0})) = \varphi(a, \dots, a) = a,
\]
so $f \in \clIntVal{\clAll}{a}{}$.
\end{proof}

\begin{lemma}
\label{lem:Kax}
Let $C_1$ be a subclone of $\clOX$, and let $K$ be a $(C_1,\clIc)$\hyp{}clonoid.
Then $\clIntVal{K}{0}{} = K \cap \clOX$ and $\clIntVal{K}{1}{} = K \cap \clIX$ are $(C_1,\clIc)$\hyp{}clonoids.
\end{lemma}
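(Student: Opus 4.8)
The plan is to verify directly the two defining closure properties of a $(C_1,\clIc)$\hyp{}clonoid for each of the two sets $\clIntVal{K}{0}{}$ and $\clIntVal{K}{1}{}$, namely stability under left composition with $\clIc$ and under right composition with $C_1$. The left\hyp{}composition condition is immediate and needs no hypothesis at all: for any set $L \subseteq \clAll$ one has $\clIc L \subseteq L$, since composing a projection $\pr_i^{(n)}$ with a tuple $(g_1, \dots, g_n)$ of functions from $L$ simply returns the component $g_i \in L$. Applying this with $L = \clIntVal{K}{0}{}$ and $L = \clIntVal{K}{1}{}$ disposes of the left\hyp{}composition requirement for both sets at once.

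The substance of the argument lies in the right\hyp{}composition condition, and here I would lean on Lemma~\ref{lem:a-pres}. Since $\clIntVal{K}{0}{} = K \cap \clOX \subseteq \clOX = \clIntVal{\clAll}{0}{}$ and $C_1$ is a subclone of $\clOX$ by hypothesis, Lemma~\ref{lem:a-pres}\ref{lem:a-pres:1} with $a = 0$ yields $(\clIntVal{K}{0}{}) C_1 \subseteq \clOX$; likewise $\clIntVal{K}{1}{} = K \cap \clIX \subseteq \clIX = \clIntVal{\clAll}{1}{}$ together with the same lemma for $a = 1$ gives $(\clIntVal{K}{1}{}) C_1 \subseteq \clIX$. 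On the other hand, monotonicity of function class composition and the assumption that $K$ is a $(C_1,\clIc)$\hyp{}clonoid give $(\clIntVal{K}{0}{}) C_1 \subseteq K C_1 \subseteq K$ and similarly $(\clIntVal{K}{1}{}) C_1 \subseteq K$. Intersecting the two inclusions then shows $(\clIntVal{K}{0}{}) C_1 \subseteq K \cap \clOX = \clIntVal{K}{0}{}$ and $(\clIntVal{K}{1}{}) C_1 \subseteq K \cap \clIX = \clIntVal{K}{1}{}$, which completes the verification.

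There is no genuine obstacle here; the only step that actually uses the hypotheses is the value\hyp{}at\hyp{}$\vect{0}$ bookkeeping packaged into Lemma~\ref{lem:a-pres}\ref{lem:a-pres:1}, and this is precisely where $C_1 \subseteq \clOX$ is indispensable: because every inner function $h_j \in C_1$ satisfies $h_j(\vect{0}) = 0$, the value $f(\vect{0})$ of a composite $f = g(h_1, \dots, h_n)$ equals $g(\vect{0})$, so membership in $\clOX$ or in $\clIX$ is preserved under right composition with $C_1$. Alternatively, the whole proof can be phrased through the closure operator $\gen[(C_1,\clIc)]{\cdot}$: by Lemma~\ref{lem:a-pres}\ref{lem:a-pres:2} (noting that $\clIc$ is a subclone of both $\clOX$ and $\clIntVal{\clAll}{}{1}$) the generated clonoid $\gen[(C_1,\clIc)]{\clIntVal{K}{0}{}}$ stays inside $\clOX$, while monotonicity of the closure operator keeps it inside $\gen[(C_1,\clIc)]{K} = K$, whence it equals $\clIntVal{K}{0}{}$ and the latter is a clonoid; the case of $\clIntVal{K}{1}{}$ is symmetric.
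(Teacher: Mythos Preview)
Your proof is correct and follows essentially the same approach as the paper: both use Lemma~\ref{lem:a-pres}\ref{lem:a-pres:1} together with monotonicity of class composition and the clonoid property $K C_1 \subseteq K$ to obtain $(\clIntVal{K}{a}{}) C_1 \subseteq K \cap \clIntVal{\clAll}{a}{} = \clIntVal{K}{a}{}$. The only cosmetic difference is that you spell out the trivial left\hyp{}composition step $\clIc L \subseteq L$ explicitly, whereas the paper absorbs it into the observation $K = \clIc(K C_1) = K C_1$.
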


\begin{proof}
Because $K$ is a $(C_1,\clIc)$\hyp{}clonoid, we have $K = \gen[(C_1,\clIc)]{K} = \clIc ( K C_1 ) = K C_1$.
We have
$\clIntVal{K}{0}{} C_1 \subseteq K C_1 = K$ by the monotonicity of function class composition and $\clIntVal{K}{0}{} C_1 \subseteq \clOX$ by Lemma~\ref{lem:a-pres}; therefore, $\clIntVal{K}{0}{} C_1 \subseteq K \cap \clOX = \clIntVal{K}{0}{}$.
Similarly,
$\clIntVal{K}{1}{} C_1 \subseteq K C_1 = K$ and $\clIntVal{K}{1}{} C_1 \subseteq \clIX$; therefore, $\clIntVal{K}{1}{} C_1 \subseteq K \cap \clIX = \clIntVal{K}{1}{}$.
\end{proof}

\begin{proposition}
\label{prop:card-Ic-Istar}
Let $C_1$ be a subclone of $\clOX$ or of $\clXI$.
\begin{enumerate}[label=\upshape{(\roman*)}]
\item\label{prop:card-Ic-Istar:U} $\closys{(C_1,\clIc)}$ is uncountable if and only if $\closys{(C_1,\clIstar)}$ is uncountable.
\item\label{prop:card-Ic-Istar:C} $\closys{(C_1,\clIc)}$ is countably infinite if and only if $\closys{(C_1,\clIstar)}$ is countably infinite.
\item\label{prop:card-Ic-Istar:F} $\closys{(C_1,\clIc)}$ is finite if and only if $\closys{(C_1,\clIstar)}$ is finite.
\end{enumerate}
\end{proposition}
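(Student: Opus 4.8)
The plan is to establish the three equivalences first under the assumption that $C_1$ is a subclone of $\clOX$, and then to recover the case $C_1 \subseteq \clXI$ by duality. For that reduction I would use that $\clOX^{\mathrm d} = \clXI$ and that the clones $\clIc$ and $\clIstar$ are self\hyp{}dual (every projection and every negated projection equals its own dual). Hence, if $C_1 \subseteq \clXI$, then $C_1^{\mathrm d} \subseteq \clOX$, and Corollary~\ref{cor:duals} equates the cardinality of $\closys{(C_1,\clIc)}$ with that of $\closys{(C_1^{\mathrm d},\clIc)}$ and the cardinality of $\closys{(C_1,\clIstar)}$ with that of $\closys{(C_1^{\mathrm d},\clIstar)}$, so the statement for $C_1^{\mathrm d} \subseteq \clOX$ transfers back verbatim.

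The heart of the argument is the pair of cardinality inequalities
\[
\card{\closys{(C_1,\clIstar)}} \leq \card{\closys{(C_1,\clIc)}} \leq \card{\closys{(C_1,\clIstar)}}^2,
\]
valid whenever $C_1 \subseteq \clOX$. The left inequality is immediate from Lemma~\ref{lem:DM-3.3}: every $(C_1,\clIstar)$\hyp{}clonoid is in particular a $(C_1,\clIc)$\hyp{}clonoid, so $\closys{(C_1,\clIstar)}$ embeds into $\closys{(C_1,\clIc)}$. For the right inequality I would exhibit an injection from $\closys{(C_1,\clIc)}$ into $\closys{(C_1,\clIstar)} \times \closys{(C_1,\clIstar)}$. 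Given a $(C_1,\clIc)$\hyp{}clonoid $K$, Lemma~\ref{lem:Kax} splits it into the two $(C_1,\clIc)$\hyp{}clonoids $K_0 := \clIntVal{K}{0}{} = K \cap \clOX$ and $K_1 := \clIntVal{K}{1}{} = K \cap \clIX$; by Lemma~\ref{lem:DM-3.2} together with Lemma~\ref{lem:DM-3.3}, the sets $K_0 \cup \overline{K_0}$ and $K_1 \cup \overline{K_1}$ are $(C_1,\clIstar)$\hyp{}clonoids, and I would map $K$ to the pair $(K_0 \cup \overline{K_0},\, K_1 \cup \overline{K_1})$. This map is injective: since $\overline{K_0} \subseteq \clIX$ and $\overline{K_1} \subseteq \clOX$, one recovers $K_0 = (K_0 \cup \overline{K_0}) \cap \clOX$ and $K_1 = (K_1 \cup \overline{K_1}) \cap \clIX$, and then $K = K_0 \cup K_1$.

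With this sandwich in hand, the three statements follow by a routine case analysis on cardinal arithmetic, in the same spirit as the proof of Proposition~\ref{prop:C2-const}. Writing $\kappa = \card{\closys{(C_1,\clIstar)}}$ and $\lambda = \card{\closys{(C_1,\clIc)}}$ with $\kappa \leq \lambda \leq \kappa^2$, one uses that $\kappa^2$ is finite when $\kappa$ is finite and that $\kappa^2 = \kappa$ for every infinite cardinal $\kappa$. Consequently $\lambda$ is finite exactly when $\kappa$ is finite, $\lambda = \aleph_0$ exactly when $\kappa = \aleph_0$, and $\lambda$ is uncountable exactly when $\kappa$ is uncountable, which yields statements \ref{prop:card-Ic-Istar:F}, \ref{prop:card-Ic-Istar:C}, and \ref{prop:card-Ic-Istar:U}, respectively.

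The step I expect to demand the most care is the right\hyp{}hand inequality, specifically the verification that the constructed pair genuinely lands in $\closys{(C_1,\clIstar)} \times \closys{(C_1,\clIstar)}$. This is precisely where the hypothesis $C_1 \subseteq \clOX$ is indispensable: it is Lemma~\ref{lem:Kax}, which requires $C_1 \subseteq \clOX$, that guarantees $K_0$ and $K_1$ are themselves $(C_1,\clIc)$\hyp{}clonoids and hence that $K_0 \cup \overline{K_0}$ and $K_1 \cup \overline{K_1}$ are $(C_1,\clIstar)$\hyp{}clonoids. Once that is secured, everything else is bookkeeping with complementation, using only that $\overline{\clOX} = \clIX$.
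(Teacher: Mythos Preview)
Your proof is correct and uses the same basic ingredients as the paper (Lemmata~\ref{lem:DM-3.2}, \ref{lem:DM-3.3}, \ref{lem:Kax}, and the duality reduction), but it is organized more efficiently. The paper does not go directly for the sandwich $\kappa \leq \lambda \leq \kappa^2$; instead it partitions $\closys{(C_1,\clIc)}$ into four pieces $L_1,L_2,L_3,L_4$ according to whether $\clIntVal{K}{0}{}$ and $\clIntVal{K}{1}{}$ are empty, shows $\card{L_2}=\card{L_3}\leq\card{L_1}\leq\card{L_2\times L_3}$ via the maps $K\mapsto\overline{K}$, $K\mapsto K\cup\overline{K}$, and $K\mapsto(\clIntVal{K}{0}{},\clIntVal{K}{1}{})$, and then separately compares $\card{L_2}$ with $\card{\closys{(C_1,\clIstar)}}$. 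Your single injection $K\mapsto(K_0\cup\overline{K_0},\,K_1\cup\overline{K_1})$ into $\closys{(C_1,\clIstar)}^2$ compresses these steps and avoids the case distinction on emptiness entirely; the recoverability argument $(K_0\cup\overline{K_0})\cap\clOX=K_0$ is the clean reason why the partition is unnecessary. The trade\hyp{}off is that you invoke $\kappa^2=\kappa$ for infinite cardinals, which is a slightly stronger piece of cardinal arithmetic than the paper's ``product of countable sets is countable,'' but in ZFC this is harmless.
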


\begin{proof}
We assume that $C_1$ is a subclone of $\clOX$. The claim about subclones of $\clXI$ follows by duality from Proposition~\ref{prop:Knid}.

Partition $\closys{(C_1,\clIc)}$ into four parts:
\begin{align*}
L_1 &:= \{ \, K \in \closys{(C_1,\clIc)} \mid \clIntVal{K}{0}{} \neq \clEmpty, \, \clIntVal{K}{1}{} \neq \clEmpty \, \}, \\
L_2 &:= \{ \, K \in \closys{(C_1,\clIc)} \mid \clIntVal{K}{0}{} \neq \clEmpty, \, \clIntVal{K}{1}{} = \clEmpty \, \}, \\
L_3 &:= \{ \, K \in \closys{(C_1,\clIc)} \mid \clIntVal{K}{0}{} = \clEmpty, \, \clIntVal{K}{1}{} \neq \clEmpty \, \}, \\
L_4 &:= \{ \, K \in \closys{(C_1,\clIc)} \mid \clIntVal{K}{0}{} = \clEmpty, \, \clIntVal{K}{1}{} = \clEmpty \, \}.
\end{align*}
Obviously, $L_4 = \{\clEmpty\}$.

It follows from Lemma~\ref{lem:DM-3.2} that $L_2 = \overline{L_3}$, and the map $K \mapsto \overline{K}$ is a bijection between $L_2$ and $L_3$; therefore, $\card{L_2} = \card{L_3}$.
Moreover, the map $K \mapsto K \cup \overline{K}$ is an injection from $L_2$ to $L_1$ (equivalently, from $L_3$ to $L_1$; therefore $\card{L_2} = \card{L_3} \leq \card{L_1}$.

Observe also that $\card{L_1} \leq \card{L_2 \times L_3}$, because the map $L_1 \to L_2 \times L_3$, $K \mapsto (\clIntVal{K}{0}{}, \clIntVal{K}{1}{})$ is clearly an injection; note that $\clIntVal{K}{0}{}$ and $\clIntVal{K}{1}{}$ are indeed $(C_1,\clIc)$\hyp{}clonoids by Lemma~\ref{lem:Kax}.
Thus, we have
\[
\card{L_2} = \card{L_3} \leq \card{L_1} \leq \card{L_2 \times L_3}.
\]
Because the Cartesian product of finite sets is finite, and the product of countable sets is countable,
it follows that the three parts $L_1$, $L_2$, and $L_3$ are either all finite, all three are countably infinite, or all three are uncountable.
Because a finite union of finite (countably infinite, uncountable, resp.)\ sets is finite (countably infinite, uncountable, resp.), it follows that if $\closys{(C_1,\clIc)}$ is finite (countably infinite, uncountable, resp.)\, then so are $L_1$, $L_2$, and $L_3$.

Because for each $(C_1,\clIc)$\hyp{}clonoid $K$, the class $K \cup \overline{K}$ is a $(C_1,\clIstar)$\hyp{}clonoid by Lemma~\ref{lem:DM-3.2}, it follows that $K \mapsto K \cup \overline{K}$ is an injection $L_2 \to \closys{(C_1,\clIstar)}$; therefore, $\card{L_2} \leq \card{\closys{(C_1,\clIstar)}}$.
Because $\closys{(C_1,\clIstar)} \subseteq \closys{(C_1,\clIc)}$, we have $\card{\closys{(C_1,\clIstar)}} \leq \card{\closys{(C_1,\clIc)}}$.

Now, for statement \ref{prop:card-Ic-Istar:U}, the implication ``$\Leftarrow$'' is clear, and ``$\Rightarrow$'' follows from the above observations (if $\closys{(C_1,\clIc)}$ is uncountable, then so is $L_2$ and hence also $\closys{(C_1,\clIstar)}$).

For statement \ref{prop:card-Ic-Istar:C}, if $\closys{(C_1,\clIstar)}$ is countably infinite, then $\closys{(C_1,\clIc)}$ is at least countably infinite; but $\closys{(C_1,\clIstar)}$ cannot be uncountable by \ref{prop:card-Ic-Istar:U}. Conversely, if $\closys{(C_1,\clIc)}$ is countably infinite, then so is $L_2$, and therefore $\closys{(C_1,\clIstar)}$ is at least countably infinite; $\closys{(C_1,\clIstar)}$ cannot be uncountable by \ref{prop:card-Ic-Istar:U}.

For statement \ref{prop:card-Ic-Istar:F}, the implication ``$\Rightarrow$'' is clear, and the implication ``$\Leftarrow$'' follows by contraposition from \ref{prop:card-Ic-Istar:U} and \ref{prop:card-Ic-Istar:C}.
\end{proof}


\section{Pippenger's functions and uncountable clonoid lattices}
\label{sec:uncountable}

The main result of this section is the following theorem that shows that for certain pairs $(C_1,C_2)$ of clones, the lattice $\closys{(C_1,C_2)}$ of $(C_1,C_2)$\hyp{}clonoids is uncountably infinite.

\begin{theorem}
\label{thm:uncountable}
For clones $C_1$ and $C_2$ such that $C_1 \subseteq K_1$ and $C_2 \subseteq K_2$ for some
\[
\begin{split}
(K_1, K_2) \in
\{
&
(\clOmegaOne, \clOmegaOne),
(\clOmegaOne, \clLambda),
(\clOmegaOne, \clV),
(\clIstar, \clUinf),
(\clIstar, \clWinf), \\
&
(\clIo, \clUinf),
(\clIi, \clUinf),
(\clIo, \clWinf),
(\clIi, \clWinf),
(\clL, \clOmegaOne), \\
&
(\clLambda, \clLambda),
(\clLambda, \clV),
(\clV, \clLambda),
(\clV, \clV),
(\clLambda, \clOmegaOne),
(\clV, \clOmegaOne)
\}
\end{split}
\]
there are an uncountable infinitude of $(C_1,C_2)$\hyp{}clonoids.
\end{theorem}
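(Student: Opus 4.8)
The plan is to reduce the statement to a short list of ``maximal'' pairs and then, for each, to embed the power set of a countably infinite set into the clonoid lattice.

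First I would invoke monotonicity. By Lemma~\ref{lem:clonmon}, if $C_1 \subseteq K_1$ and $C_2 \subseteq K_2$, then every $(K_1,K_2)$\hyp{}clonoid is a $(C_1,C_2)$\hyp{}clonoid, so $\closys{(K_1,K_2)} \subseteq \closys{(C_1,C_2)}$ and hence $\card{\closys{(K_1,K_2)}} \le \card{\closys{(C_1,C_2)}}$. Thus it suffices to prove that $\closys{(K_1,K_2)}$ is uncountable for each of the sixteen listed pairs $(K_1,K_2)$, and then the conclusion for arbitrary subclones $C_1 \subseteq K_1$, $C_2 \subseteq K_2$ is automatic. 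I would then thin this list using the symmetries already established. Corollary~\ref{cor:duals} lets me pass freely between a pair and its dual; since $\clOmegaOne$, $\clIstar$, and $\clL$ are self\hyp{}dual while $\clV^{\mathrm d}=\clLambda$, $(\clUinf)^{\mathrm d}=\clWinf$, and $\clIo^{\mathrm d}=\clIi$, the sixteen pairs collapse into only a handful of duality classes (for instance $(\clOmegaOne,\clLambda)$ is dual to $(\clOmegaOne,\clV)$, and $(\clLambda,\clLambda)$ is dual to $(\clV,\clV)$). Moreover, where a target differs from another only by the adjunction of constants, Corollary~\ref{cor:Bfintervals} and Proposition~\ref{prop:C2-const} identify the cardinalities, letting me replace the targets $\clOmegaOne$, $\clLambda$, $\clV$ by $\clIstar$, $\clLambdac$, $\clVc$. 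Only a few genuinely distinct representatives then remain to be treated by hand.

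The engine of the argument is a single embedding scheme. For a representative pair $(K_1,K_2)$ I would exhibit a countably infinite family $F=\{f_n : n \ge 2\}$ of pairwise distinct Boolean functions --- \emph{Pippenger's functions} --- and prove the \emph{independence property} that $f_n \notin \gen[(K_1,K_2)]{F \setminus \{f_n\}}$ for every $n$. Granting this, for every $S \subseteq F$ and every $n \notin S$ one has $S \subseteq F \setminus \{f_n\}$, so by monotonicity of the generation operator $f_n \notin \gen[(K_1,K_2)]{S}$; combined with the extensivity $S \subseteq \gen[(K_1,K_2)]{S}$ this yields $\gen[(K_1,K_2)]{S} \cap F = S$. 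Hence distinct subsets $S \ne S'$ give $\gen[(K_1,K_2)]{S} \ne \gen[(K_1,K_2)]{S'}$, so $S \mapsto \gen[(K_1,K_2)]{S}$ is an injection from the power set of $F$, and since $F$ is countably infinite this exhibits $2^{\aleph_0}$ distinct $(K_1,K_2)$\hyp{}clonoids. To verify the independence property I would use the explicit description $\gen[(K_1,K_2)]{S} = K_2\,(S\,K_1)$: a function lies in the generated clonoid exactly when it has the form $g(\gamma_1,\dots,\gamma_m)$ with $g \in K_2$ and each $\gamma_i \in S\,K_1$, i.e. each $\gamma_i$ is obtained from a \emph{single} generator $f_{s_i}\in S$ by right composition with members of the source clone $K_1$ (coordinate permutations, identifications, added dummies, and fixings for $\clIo,\clIi,\clIstar,\clOmegaOne$; affine substitutions for $\clL$; bounded conjunctions or disjunctions with constants for $\clLambda$, $\clV$). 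For the essentially unary targets the outer operation $g$ reduces to the identity, a negation, or a constant applied to one $\gamma_i$, so membership is a minor\hyp{}and\hyp{}complementation analysis of one generator at a time, and I would choose the $f_n$ to form an antichain in the resulting quasiorder.

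The delicate point, and the main obstacle, is the semilattice and $0$/$1$\hyp{}separating targets. There the outer operation $g$ may combine \emph{several} generators, so $\gen[(K_1,K_2)]{S}$ is closed under arbitrary\hyp{}arity conjunctions (for $K_2 \in [\clLambdac,\clLambda]$) or disjunctions (for $K_2 \in [\clVc,\clV]$, and analogously for $\clUinf$, $\clWinf$) of the $\gamma_i$. Consequently the generated clonoid is far larger than the union of the principal clonoids $\gen[(K_1,K_2)]{\{f_s\}}$, and the functions must be designed so that their true\hyp{}point sets (dually, false\hyp{}point sets) are \emph{irreducible}: $f_n^{-1}(1)$ should be neither an intersection nor a union of the true\hyp{}point sets of constant\hyp{}substituted minors of the other generators, and should remain so under the domain symmetries induced by $K_1$. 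I expect the bulk of the work to be an invariant argument: attach to each function a parameter reflecting the irreducible combinatorial structure of its extremal points, chosen so that it is fixed by the $K_1$\hyp{}minor operations, is not raised by the $K_2$\hyp{}combinations, and separates the members of $F$ --- thereby forcing any representation of $f_n$ to use $f_n$ itself and ruling out every way of assembling $f_n$ as a bounded meet or join of minors of the remaining generators.
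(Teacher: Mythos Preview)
Your plan is correct and matches the paper's approach: reduce via Lemma~\ref{lem:clonmon} and duality (Proposition~\ref{prop:Knid}/Corollary~\ref{cor:duals}) to a short list of representative pairs, then for each exhibit a countably infinite family with the independence property so that $S\mapsto\gen[(K_1,K_2)]{S}$ embeds the power set. One point to flag: the paper does not get by with a single family---Pippenger's functions $f_n$ (true points of weight $1$ or $n-1$) handle most cases, but for $(\clIstar,\clUinf)$ a second family $q_n$ (true points of weight $1$ or $n$) is used, because with negated projections in the source one can produce $\vak{1}\in\{f_m\}\,\clIstar$ and the ``$\varphi\neq\vak{1}$'' step of the minorant argument breaks down; so be prepared to tailor the family to the pair rather than fix it in advance.
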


The remainder of this section is devoted to the proof of this result.
The proof is based on exhibiting, for each pair $(C_1,C_2)$ of clones as prescribed in the theorem, a countably infinite family of Boolean functions with the property that distinct subsets of this family always generate distinct $(C_1,C_2)$\hyp{}clonoids.
Because the power set of a countably infinite set is uncountable, it follows that there are an uncountable infinitude of $(C_1,C_2)$\hyp{}clonoids.

We will make use of two such countably infinite families of functions.
One of these families (see Definition~\ref{def:fn}) has proved useful in proving similar results about Boolean functions.
In fact, Pippenger \cite[Proposition~3.4]{Pippenger} used exactly this family of functions and the same type of argument to show that there are an uncountable infinitude of $(\clOmegaOne,\clIc)$\hyp{}clonoids of Boolean functions.

\begin{definition}
For $S \subseteq \nset{n}$, we denote by $\vect{e}_S$ the \emph{characteristic $n$\hyp{}tuple} of $S$, i.e., the tuple $(a_1, \dots, a_n) \in \{0,1\}^n$ satisfying $a_i = 1$ if and only if $i \in S$.
We write $\vect{e}_i$ for $\vect{e}_{\{i\}}$.
(The arity $n$ is implicit in the notation but will be clear from the context.)
\end{definition}

\begin{definition}
Let $\vect{a}, \vect{b} \in \{0,1\}^n$.
The \emph{Hamming distance} betwen $\vect{a}$ and $\vect{b}$ is $d(\vect{a},\vect{b}) := \{ \, i \in \nset{n} \mid a_i \neq b_i \,\}$, the number of positions in which the two tuples are different.
The \emph{Hamming weight} of $\vect{a}$ is $w(\vect{a}) := \{ \, i \in \nset{n} \mid a_i \neq 0 \, \}$, the number of non\hyp{}zero entries of $\vect{a}$.
We clearly have $w(\vect{a}) = d(\vect{a}, \vect{0})$ and $d(\vect{a}, \vect{b}) = w(\vect{a} + \vect{b})$.
\end{definition}

\begin{definition}[{Pippenger~\cite[Proposition~3.4]{Pippenger}}]
\label{def:fn}
For $n \in \IN$ with $n \geq 3$, we define $f_n \colon \{0,1\}^n \to \{0,1\}$ by the rule
$f_n(\vect{a}) = 1$ if and only if $w(\vect{a}) \in \{1, n-1\}$.
In other words, the true points of $f_n$ are the $n$\hyp{}tuples $\vect{e}_i$ and $\overline{\vect{e}_i}$ for $i \in \nset{n}$.
For $S \subseteq \IN^{+} \setminus \nset{2}$, let $F_S := \{ \, f_n \mid n \in S \, \}$.
\end{definition}

\begin{definition}
\label{def:qn}
For $n \in \IN$ with $n \geq 3$, we define $q_n \colon \{0,1\}^n \to \{0,1\}$ by the rule
$q_n(\vect{a}) = 1$ if and only if $w(\vect{a}) \in \{1, n\}$.
In other words, the true points of $q_n$ are the $n$\hyp{}tuples $\vect{1}$ and $\vect{e}_i$ for $i \in \nset{n}$.
For $S \subseteq \IN^{+} \setminus \nset{2}$, let $Q_S := \{ \, q_n \mid n \in S \, \}$.
\end{definition}

\begin{observation}
\label{obs:distances}
The Hamming distances between true points of $f_n$ are, for $i, j \in \nset{n}$ with $i \neq j$,
\begin{align*}
& d(\vect{e}_i, \vect{e}_i) = w(\vect{0}) = 0, &
& d(\vect{e}_i, \vect{e}_j) = w(\vect{e}_{\{i,j\}}) = 2, \\
& d(\vect{e}_i, \overline{\vect{e}_i}) = w(\vect{1}) = n, &
& d(\vect{e}_i, \overline{\vect{e}_j}) = w(\vect{e}_{\nset{n} \setminus \{i,j\}}) = n-2, \\
& d(\overline{\vect{e}_i}, \overline{\vect{e}_i}) = w(\vect{0}) = 0, &
& d(\overline{\vect{e}_i}, \overline{\vect{e}_j}) = w(\vect{e}_{\{i,j\}}) = 2.
\end{align*}
\end{observation}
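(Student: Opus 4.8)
The statement is a direct computation, and the plan is to reduce every one of the six equalities to the evaluation of a Hamming weight by using the two identities recorded with the definition of Hamming distance and weight: $d(\vect{a},\vect{b}) = w(\vect{a}+\vect{b})$ and, for the complement, $\overline{\vect{a}} = \vect{1} + \vect{a}$ (since $\overline{a} = 1 - a = 1 + a$ in $\{0,1\}$ under addition modulo $2$). Once we observe this, each entry of the display becomes the weight of an explicit sum of characteristic tuples, which we read off from the support of that sum. I would also record at the outset that Hamming distance is invariant under complementing both arguments, i.e.\ $d(\overline{\vect{a}}, \overline{\vect{b}}) = d(\vect{a},\vect{b})$, which lets the third row be deduced from the first.

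First I would dispose of the three equalities in the left column, where the two points coincide: then $\vect{a} + \vect{b} = \vect{0}$, so the distance is $w(\vect{0}) = 0$. For the right column I would compute the sums. For $d(\vect{e}_i, \vect{e}_j)$ with $i \neq j$ we have $\vect{e}_i + \vect{e}_j = \vect{e}_{\{i,j\}}$, of weight $2$; and for $d(\overline{\vect{e}_i}, \overline{\vect{e}_j})$ the two occurrences of $\vect{1}$ in $(\vect{1}+\vect{e}_i) + (\vect{1}+\vect{e}_j)$ cancel, so the sum is again $\vect{e}_{\{i,j\}}$, of weight $2$ (equivalently, this follows from the complement\hyp{}invariance noted above). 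For the middle row, $d(\vect{e}_i, \overline{\vect{e}_i})$ is the distance between a tuple and its complement, whose sum is $\vect{1}$, of weight $n$; and for $d(\vect{e}_i, \overline{\vect{e}_j})$ with $i \neq j$ I would substitute $\overline{\vect{e}_j} = \vect{1}+\vect{e}_j$ to obtain $\vect{e}_i + \vect{e}_j + \vect{1} = \overline{\vect{e}_{\{i,j\}}} = \vect{e}_{\nset{n} \setminus \{i,j\}}$, of weight $n-2$.

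There is no genuine obstacle here, since the whole claim is bookkeeping. The only entry requiring a little care is the mixed case $d(\vect{e}_i, \overline{\vect{e}_j})$: one must track that the two points agree precisely in coordinates $i$ and $j$ and disagree in the remaining $n-2$ coordinates, and then confirm that this matches $w(\vect{e}_{\nset{n} \setminus \{i,j\}}) = n-2$. The purpose of the observation is presumably to feed these distances into a later separation argument for $F_S$, so I would state all six values in the normalized form $w(\vect{e}_T)$ exactly as displayed, to make the supports available for that subsequent use.
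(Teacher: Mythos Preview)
Your proposal is correct; the paper itself states this as an observation with no accompanying proof, treating the six equalities as self\hyp{}evident from the identity $d(\vect{a},\vect{b}) = w(\vect{a}+\vect{b})$. Your verification via $\overline{\vect{a}} = \vect{1} + \vect{a}$ and the cancellation of the two $\vect{1}$'s is exactly the bookkeeping one would supply if asked to justify it.
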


\begin{observation}
\label{obs:first3}
Let $n \geq 4$.
For every $\vect{u} \in \{0,1\}^3$, there exists a $\vect{v} \in \{0,1\}^{n-3}$ such that $\vect{u} \vect{v} \in f_n^{-1}(1)$ ($\vect{u} \vect{v}$ denotes the concatenation of $\vect{u}$ and $\vect{v}$), namely,
\begin{align*}
\vect{e}_4 &= \underline{000}10 \dots 0, &
\vect{e}_1 &= \underline{100}00 \dots 0, &
\vect{e}_2 &= \underline{010}00 \dots 0, &
\vect{e}_3 &= \underline{001}00 \dots 0, \\
\overline{\vect{e}_4} &= \underline{111}01 \dots 1, &
\overline{\vect{e}_1} &= \underline{011}11 \dots 1, &
\overline{\vect{e}_2} &= \underline{101}11 \dots 1, &
\overline{\vect{e}_3} &= \underline{110}11 \dots 1.
\end{align*}
Moreover, by negating the first component of $\vect{v}$, we obtain a tuple $\vect{v}' \in \{0,1\}^{n-3}$ such that $\vect{u} \vect{v}' \in f_n^{-1}(0)$.
Consequently, for every $\vect{a} \in \{0,1\}^n$, there exist $\vect{e} \in f_n^{-1}(1)$ and $\vect{e}' \in f_n^{-1}(0)$ such that the first three components of $\vect{a}$, $\vect{e}$, and $\vect{e}'$ coincide.
\end{observation}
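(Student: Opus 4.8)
The plan is to dispatch the three assertions in order, using the explicit extensions already displayed in the statement. For the first assertion I would verify that the eight tabulated tuples do realize all eight prefixes $\vect{u} \in \{0,1\}^3$ and are genuine true points. The clean way to organize this is structural rather than brute case-checking: prefixes carrying at most one $1$ are caught by the weight-$1$ tuples $\vect{e}_1, \vect{e}_2, \vect{e}_3$, with the all-zero prefix handled by $\vect{e}_4$ (which places the lone $1$ in position $4$, available because $n \geq 4$); dually, prefixes carrying at least two $1$s are caught by the weight-$(n-1)$ tuples $\overline{\vect{e}_1}, \overline{\vect{e}_2}, \overline{\vect{e}_3}$, with the all-one prefix handled by $\overline{\vect{e}_4}$. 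Since $w(\vect{e}_i) = 1$ and $w(\overline{\vect{e}_i}) = n-1$, each extension lies in $f_n^{-1}(1)$ by the definition of $f_n$.

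For the false-point claim I would argue uniformly rather than tuple-by-tuple. Because $n \geq 4$, the suffix $\vect{v}$ has length $n - 3 \geq 1$ and thus possesses a first component to negate. Negating one coordinate of a $0/1$-tuple changes its Hamming weight by exactly $\pm 1$; hence the weight-$1$ true points move to weight $0$ or $2$, and the weight-$(n-1)$ true points move to weight $n-2$ or $n$. For $n \geq 4$ none of $0, 2, n-2, n$ equals $1$ or $n-1$, so $w(\vect{u}\vect{v}') \notin \{1, n-1\}$ and therefore $\vect{u}\vect{v}' \in f_n^{-1}(0)$.

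The \emph{consequently} clause then follows at once: given an arbitrary $\vect{a} \in \{0,1\}^n$, take $\vect{u}$ to be its first three components and set $\vect{e} := \vect{u}\vect{v}$ and $\vect{e}' := \vect{u}\vect{v}'$ from the two previous paragraphs. Both agree with $\vect{a}$ in the first three coordinates, $\vect{e}$ is a true point, and $\vect{e}'$ is a false point, as required.

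This observation is essentially a verification, so there is no serious obstacle; the only points demanding a moment's care are precisely the two spots where the hypothesis $n \geq 4$ is invoked — namely that position $4$ exists (so that the all-zero and all-one prefixes admit the extensions $\vect{e}_4$ and $\overline{\vect{e}_4}$, and so that $\vect{v}$ is nonempty and has a first component to negate) and that the weight shift cannot re-enter $\{1, n-1\}$, which is exactly the collision $2 = n-1$ or $n-2 = 1$ that would occur when $n = 3$.
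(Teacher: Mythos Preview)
Your proposal is correct and matches the paper's approach: the paper states this as an Observation without a separate proof, relying on the explicit tabulation of the eight true points and the instruction to negate the first suffix coordinate, and your write-up simply supplies the routine verification (the weight-$\pm 1$ argument and the check that $\{0,2,n-2,n\} \cap \{1,n-1\} = \emptyset$ for $n \geq 4$) that the paper leaves implicit.
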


\begin{definition}
Let $f, g \in \{0,1\}^n$.
We say that $f$ is a \emph{minorant} of $g$ and that $g$ is a \emph{majorant} of $f$, and we write $f \minorant g$,
if $f(\vect{a}) \leq g(\vect{a})$ for all $\vect{a} \in \{0,1\}^n$.
\end{definition}

\begin{lemma}
\label{lem:fn-Omega1}
Let $m, n \geq 5$.
\begin{enumerate}[label=\upshape{(\roman*)}]
\item\label{lem:fn-Omega1:Omega1}
If $\varphi \in \{ f_m \} \, \clOmegaOne$ and $f_n \minorant \varphi$ or $\neg \varphi \minorant f_n$, then $\varphi = \vak{1}$ or $m = n$.

\item\label{lem:fn-Omega1:IoIi}
If $\varphi \in \{ f_m \} \, \clIo$ or $\varphi \in \{ f_m \} \, \clIi$ and $f_n \minorant \varphi$, then $m = n$.
\end{enumerate}
\end{lemma}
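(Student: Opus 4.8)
The plan is to encode every element of $\{f_m\}\,\clOmegaOne$ by a single affine counting form and then exploit that this form is a genuine count. Write $\varphi = f_m(g_1, \dots, g_m)$, where each $g_l \in \clOmegaOne$ has the common arity $n$ (forced by either minorant relation with $f_n$), so that $g_l$ is a projection $\pr_j^{(n)}$, a negated projection $\neg_j^{(n)}$, or a constant. Setting $c := \lvert \{\, l : g_l(\vect 0) = 1 \,\} \rvert$ and $d_j := \lvert \{\, l : g_l = \pr_j^{(n)} \,\} \rvert - \lvert \{\, l : g_l = \neg_j^{(n)} \,\} \rvert$, the number of arguments of $f_m$ equal to $1$ at input $\vect x$ is the affine form $W(\vect x) = c + \sum_j d_j x_j$, and $\varphi(\vect x) = 1$ exactly when $W(\vect x) \in \{1, m-1\}$. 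The single fact driving the whole argument is that $W$ is a count, hence $0 \le W(\vect x) \le m$ for every $\vect x$; combined with $n \ge 5$, this is what eventually pins down $m = n$. I also abbreviate $c' := W(\vect 1) = c + \sum_j d_j$.

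For the first hypothesis of part~(i) I would read off the conditions at the $2n$ true points of $f_n$. From $W(\vect e_i) = c + d_i \in \{1, m-1\}$ each $d_i$ lies in the pair $\{1-c,\, m-1-c\}$, whose two entries are at distance $m-2 \ge 3$; from $W(\overline{\vect e_i}) = c' - d_i \in \{1, m-1\}$ each $d_i$ lies in a second pair, again of spread $m-2$. Since $m \ge 5$, these pairs cannot be disjoint, and a short computation shows that they either coincide, giving the balanced identity $c + c' = m$, or meet in a single point, which forces all $d_i$ to be equal and hence (using $n > 2$) $\varphi = \vak 1$. Assuming $\varphi \ne \vak 1$, I am in the balanced case $c' = m - c$. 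Writing $r$ for the number of indices with $d_i = m-1-c$ (the remaining $n-r$ having $d_i = 1-c$), I would split on $c$. In the interior range $2 \le c \le m-2$ the two possible values of $d_i$ have opposite signs, so $W_{\max} = c + r(m-1-c) \le m$ and $W_{\min} = c + (n-r)(1-c) \ge 0$ give $r \le 2$ and $n - r \le 2$, contradicting $n \ge 5$; the endpoints $c \in \{1, m-1\}$ again force $\varphi = \vak 1$ (here $W \le m$ leaves $W$ only the values $1$ and $m-1$). Hence $c \in \{0, m\}$, and by the symmetry $\vect x \mapsto \overline{\vect x}$, which fixes both $f_m$ and $f_n$, it suffices to treat $c = 0$; there the balanced identity reads $r(m-2) + n = m$, forcing $r = 0$ and $m = n$.

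For the second hypothesis of part~(i) I would rewrite $\neg\varphi \minorant f_n$ as $\overline{f_n} \minorant \varphi$, so that $W(\vect x) \in \{1, m-1\}$ at every tuple of weight in $\{0, 2, 3, \dots, n-2, n\}$. Comparing a weight\hyp{}$2$ tuple with a weight\hyp{}$3$ tuple extending it (both admissible since $n \ge 5$) shows $d_k \in \{0, \pm(m-2)\}$ for every $k$, and comparing $\vect 0$ with the weight\hyp{}$2$ tuples shows that at most one $d_k$ equals $+(m-2)$ and at most one equals $-(m-2)$. This leaves only three shapes for the vector $(d_j)$: the all\hyp{}zero vector and the vector with a single nonzero entry both make $W$ take only values in $\{1, m-1\}$ (whence $\varphi = \vak 1$, using $c = W(\vect 0)$ and $c' = W(\vect 1)$ in $\{1,m-1\}$), while the shape with one $+(m-2)$ and one $-(m-2)$ is impossible because $0 \le W \le m$ then fails at a suitable weight\hyp{}$2$ tuple. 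Thus this branch always lands in the alternative $\varphi = \vak 1$.

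Finally, part~(ii) is the degenerate version of the $c \in \{0,m\}$ analysis: with no negated projections we have $d_j = \lvert \{\, l : g_l = \pr_j^{(n)} \,\}\rvert \ge 0$, and $\varphi \in \{f_m\}\,\clIo$ forces $c = 0$, whereas $\varphi \in \{f_m\}\,\clIi$ forces $c' = m$. In either case the weight\hyp{}$1$ and weight\hyp{}$(n-1)$ conditions give $d_i \in \{1, m-1\}$ for all $i$, the count bound $0 \le W \le m$ together with $n \ge 5$ forces every $d_i = 1$, and the remaining weight condition then forces $m = n$ (indeed $\varphi = f_n$). The delicate step, and the only place where $n \ge 5$ rather than merely $n \ge 3$ is needed, is the interior case of the first hypothesis of part~(i): there the counting bound $0 \le W \le m$ must be combined with the balanced identity to squeeze out every configuration except the identity one.
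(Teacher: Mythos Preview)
Your argument is correct and genuinely different from the paper's. The paper encodes $\varphi = f_m(g_1,\dots,g_m)$ via the map $\sigma\colon\nset{m}\to\nset{n}\cup\{0\}$ recording which projection (or constant) each $g_l$ uses, together with a shift vector $\vect{d}\in\{0,1\}^m$, and then argues about Hamming distances: it shows (Claims~1--4) that if $\varphi\neq\vak{1}$ then $\sigma$ must be surjective onto $\nset{n}$, then that every fibre $\sigma^{-1}(i)$ is a singleton, and finally that $\sigma^{-1}(0)=\emptyset$, each step using that images of specific pairs of points in $\{0,1\}^n$ must land at one of the admissible distances $0,2,m-2,m$ in $f_m^{-1}(1)$. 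Your approach replaces this tuple-by-tuple bookkeeping with the single integer-valued affine form $W(\vect{x})=c+\sum_j d_j x_j$ and the counting constraint $0\le W\le m$; the whole argument becomes arithmetic in the parameters $c,c',r$. This is shorter and, for the second hypothesis of part~(i), it even yields the sharper conclusion $\varphi=\vak{1}$ outright (the paper's uniform treatment only delivers the disjunction). Two small points of presentation: the non-disjointness of the two option sets for $d_i$ follows simply from the existence of $d_i$, not from $m\ge5$; and the ``short computation'' disposing of the single-intersection case deserves one line (intersection in a single value forces $c+c'\in\{2,2m-2\}$, and together with $c'=c+nd$ and $n>2$ this gives $d=0$).
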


\begin{proof}
\ref{lem:fn-Omega1:Omega1}
Assume $\varphi \in \{ f_m \} \, \clOmegaOne$.
Then $\varphi = f_m(g_1, \dots, g_m)$ for some $g_1, \dots, g_m \in \clOmegaOne$.
Taking $g \colon \{0,1\}^n \to \{0,1\}^m$, $g(\vect{a}) = (g_1(\vect{a}), \dots, g_m(\vect{a}))$, we have $\varphi = f_m \circ g$.
We can write $g_i = \gamma_{\sigma(i)} + \vak{d_i}$ for some $\sigma \colon \nset{m} \to \nset{n} \cup \{0\}$,
where $\gamma_i = \pr^{(n)}_i$ if $i \in \nset{n}$, $\gamma_0 = \vak{0}$, and $d_i \in \{0,1\}$.
Thus $g(\vect{a}) = \gamma(\vect{a}) + \vect{d}$, where $\gamma \colon \{0,1\}^n \to \{0,1\}^m$ is $\gamma(\vect{a}) = (\gamma_1(\vect{a}), \dots, \gamma_m(\vect{a}))$, and $\vect{d} = (d_1, \dots, d_m)$.

If $f_n \minorant \varphi$, then for every $\vect{e} \in f_n^{-1}(1)$, it holds that $f_m(g(\vect{e})) = 1$, i.e., $g(\vect{e}) \in f_m^{-1}(1)$.
In other words, $g$ maps true points of $f_n$ to true points of $f_m$.

If $\neg \varphi \minorant f_n$, then for every $\vect{e} \in f_n^{-1}(0)$, it holds that $\neg(\varphi(\vect{e})) = \neg(f_m(g(\vect{e}))) = 0$, i.e., $g(\vect{e}) \in f_m^{-1}(1)$.
In other words, $g$ maps false points of $f_n$ to true points of $f_m$.

\begin{claim}\label{clm:fn-Omega1:1}
Either $\varphi = \vak{1}$ or there are at least four elements $p \in \nset{n}$ such that $\sigma^{-1}(p) \neq \emptyset$.
\end{claim}

\begin{pfclaim}[Proof of Claim~\ref{clm:fn-Omega1:1}]
Assume there are fewer than four elements $p \in \nset{n}$ such that $\sigma^{-1}(p) \neq \emptyset$.
We may assume, without loss of generality, that these elements are $1, \dots, \ell$ for some $\ell \leq 3$.
Let $\vect{a} \in \{0,1\}^n$.
By Observation~\ref{obs:first3}, there exist $\vect{e} \in f_n^{-1}(1)$ and $\vect{e}' \in f_n^{-1}(0)$ such that the first $\ell$ components of $\vect{a}$, $\vect{e}$, and $\vect{e}'$ coincide.
Because $g$ does not depend on the arguments with indices greater than $\ell$, we have
$g(\vect{a}) = g(\vect{e}) = g(\vect{e}')$.
If $g$ maps true points of $f_n$ to true points of $f_m$, then
$\varphi(\vect{a}) = f_m(g(\vect{a})) = f_m(g(\vect{e})) = 1$.
If $g$ maps false points of $f_n$ to true points of $f_m$, then
$\varphi(\vect{a}) = f_m(g(\vect{a})) = f_m(g(\vect{e}')) = 1$.
In either case, we conclude that $\varphi = \vak{1}$.
\end{pfclaim}

In view of Claim~\ref{clm:fn-Omega1:1}, we assume from now on that there are at least four elements $p \in \nset{n}$ such that $\sigma^{-1}(p) \neq \emptyset$, i.e., $\card{\range \sigma \cap \nset{n}} \geq 4$.

\begin{claim}\label{clm:fn-Omega1:2}
$\range \sigma \cap \nset{n} = \nset{n}$.
\end{claim}

\begin{pfclaim}[Proof of Claim~\ref{clm:fn-Omega1:2}]
Suppose, to the contrary, that $\range \sigma \cap \nset{n} \subsetneq \nset{n}$, and let $p \in \nset{n} \setminus \range \sigma$.
Because $g$ does not depend on the $p$\hyp{}th argument, we have $g(\vect{e}_p) = g(\vect{0}) = \gamma(\vect{0}) + \vect{d} = \vect{0} + \vect{d} = \vect{d}$.
Since $\vect{0} \in f_n^{-1}(0)$ and $\vect{e}_p \in f_n^{-1}(1)$ and $g$ maps true points of $f_n$ to true points of $f_m$ or $g$ maps false points of $f_n$ to true points of $f_m$, it follows that $\vect{d} \in f_m^{-1}(1)$, so $\vect{d}$ equals $\vect{e}_q$ or $\overline{\vect{e}_q}$ for some $q \in \nset{m}$.

Now, for every $i \in \range \sigma \cap \nset{n}$ we have
$g(\vect{e}_i) = g(\vect{e}_{\{i,p\}}) = \gamma(\vect{e}_i) + \vect{d} = \vect{e}_{\sigma^{-1}(i)} + \vect{d}$.
Because $\vect{e}_i \in f_n^{-1}(1)$ and $\vect{e}_{\{i,p\}} \in f_n^{-1}(0)$ and $g(f_n^{-1}(1)) \subseteq f_m^{-1}(1)$ or $g(f_n^{-1}(0)) \subseteq f_m^{-1}(1)$, it follows that $\vect{e}_{\sigma^{-1}(i)} + \vect{d} \in f_m^{-1}(1)$, and therefore $\sigma^{-1}(i)$ equals either $\{q, k\}$ or $\nset{m} \setminus \{q, k\}$ for some $k \in \nset{m} \setminus \{q\}$.
Since $\sigma$\hyp{}preimages of distinct elements are disjoint, it follows that $\range \sigma \cap \nset{n}$ can have at most two elements.
This contradicts our assumption that $\card{\range \sigma \cap \nset{n}} \geq 4$.
\end{pfclaim}

\begin{claim}\label{clm:fn-Omega1:3}
$\card{\sigma^{-1}(i)} = 1$ for all $i \in \nset{n}$.
\end{claim}

\begin{pfclaim}[Proof of Claim~\ref{clm:fn-Omega1:3}]
Note that, by Claim~\ref{clm:fn-Omega1:2}, every element of $\nset{n}$ has a preimage under $\sigma$, i.e., $\card{\sigma^{-1}(i)} \geq 1$ for all $i \in \nset{n}$.
Suppose, to the contrary, that there is a $p \in \nset{n}$ such that $\card{\sigma^{-1}(p)} \geq 2$.
Let $q \in \nset{n}$ with $p \neq q$.
If $g$ maps true points of $f_n$ to true points of $f_m$, then $g(\vect{e}_p) = \vect{e}_{\sigma^{-1}(p)} + \vect{d}$ and $g(\vect{e}_q) = \vect{e}_{\sigma^{-1}(q)} + \vect{d}$ are in $f_m^{-1}(1)$, and $d(\vect{e}_{\sigma^{-1}(p)} + \vect{d}, \vect{e}_{\sigma^{-1}(q)} + \vect{d}) = w(\vect{e}_{\sigma^{-1}(\{p,q\})}) = \card{\sigma^{-1}(\{p,q\})}$.
If $g$ maps false points of $f_n$ to true points of $f_m$, then $g(\vect{0}) = \vect{d}$ and $g(\vect{e}_{\{p,q\}}) = \vect{e}_{\sigma^{-1}(\{p,q\}} + \vect{d}$ are in $f_m^{-1}(1)$, and $d(\vect{d}, \vect{e}_{\sigma^{-1}(\{p,q\})} + \vect{d}) = w(\vect{e}_{\sigma^{-1}(\{p,q\})}) = \card{\sigma^{-1}(\{p,q\})}$.
It follows from Observation~\ref{obs:distances} that $\card{\sigma^{-1}(\{p,q\})} \in \{0, 2, m-2, m\}$.
But cardinalities $0$ and $2$ are impossible, because $\card{\sigma^{-1}(\{p,q\})} > \card{\sigma^{-1}(p)} \geq 2$.
Cardinalities $m-2$ and $m$ are impossible, because if either of these were the case, then we would have
\begin{align*}
m & \geq
\card{\sigma^{-1}(\nset{n})}
= \card{\sigma^{-1}(\{p,q\})} + \card{\sigma^{-1}(\nset{n} \setminus \{p,q\})}
\\ &
\geq (m-2) + (n-2)
\geq (m-2) + 3
\geq m + 1,
\end{align*}
a contradiction.
(Note that we used the assumption $n \geq 5$ for the second last inequality above.)
\end{pfclaim}

It follows from Claims~\ref{clm:fn-Omega1:2} and \ref{clm:fn-Omega1:3} that for $R := \sigma^{-1}(\nset{n})$, $\sigma|_R \colon R \to \nset{n}$ is a bijection, and hence $m \geq n$.
Without loss of generality, we may assume that $\sigma(i) = i$ for all $i \in \nset{n}$ and $\sigma(j) = 0$ for $j \in \nset{m} \setminus \nset{n}$.
Thus, $g(\vect{a}) = \vect{a} \vect{0} + \vect{d}$, where $\vect{0}$ is an $(m-n)$\hyp{}tuple of $0$'s.
Observe that for all $\vect{a}, \vect{b} \in \{0,1\}^n$,
\begin{equation}
\begin{split}
d( g(\vect{a}), g(\vect{b}) )
&
= w( g(\vect{a}) + g(\vect{b}) )
= w( \vect{a} \vect{0} + \vect{d} + \vect{b} \vect{0} + \vect{d} )
\\ &
= w( \vect{a} \vect{0} + \vect{b} \vect{0} )
= w( \vect{a} + \vect{b} )
= d( \vect{a}, \vect{b} ).
\end{split}
\label{eq:dw}
\end{equation}

\begin{claim}\label{clm:fn-Omega1:4}
$\sigma^{-1}(0) = \emptyset$, i.e., $m = n$.
\end{claim}

\begin{pfclaim}[Proof of Claim~\ref{clm:fn-Omega1:4}]
Suppose, to the contrary, that $\sigma^{-1}(0) \neq \emptyset$, i.e., $m > n$.
By \eqref{eq:dw},
$d( g(\vect{e}_1), g(\overline{\vect{e}_2}) )
= d( \vect{e}_1, \overline{\vect{e}_2} )
= n - 2$
and
$d( g(\vect{0}), g(\overline{\vect{e}_{\{1,2\}}}) )
= d( \vect{0}, \overline{\vect{e}_{\{1,2\}}} )
= n - 2$.
If $g$ maps true points of $f_n$ to true points of $f_m$, then $g(\vect{e}_1)$ and $g(\overline{\vect{e}_2} )$ are true points of $f_m$ with Hamming distance $n - 2$.
If $g$ maps false points of $f_n$ to true points of $f_m$, then $g(\vect{0})$ and $g(\overline{\vect{e}_{\{1,2\}}} )$ are true points of $f_m$ with Hamming distance $n - 2$.
By Observation~\ref{obs:distances}, the Hamming distance between true points of $f_m$ must be $0$, $2$, $m - 2$, or $m$.
Because $n \geq 5$, we have $3 \leq n - 2 < m - 2$.
We have reached a contradiction.
\end{pfclaim}

Claim~\ref{clm:fn-Omega1:4} now gives us the desired conclusion that $m = n$.

\ref{lem:fn-Omega1:IoIi}
Since every function in $\clIo$ preserves $0$, $\varphi \in \{ f_m \} \, \clIo$ implies $\varphi(\vect{0}) = f_m(\vect{0}) = 0$, and therefore $\varphi \neq \vak{1}$.
Similarly, every function in $\clIi$ preserves $1$, so $\varphi \in \{ f_m \} \, \clIi$ implies $\varphi(\vect{1}) = f_m(\vect{1}) = 0$, and therefore $\varphi \neq \vak{1}$.
Part \ref{lem:fn-Omega1:Omega1} then gives $m = n$.
\end{proof}

\begin{lemma}
\label{lem:qn-Istar}
Let $m, n \geq 3$.
If $\varphi \in \{ q_m \} \, \clIstar$ and $q_n \minorant \varphi$, then $m = n$.
\end{lemma}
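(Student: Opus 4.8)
The plan is to mirror the structure of the proof of Lemma~\ref{lem:fn-Omega1}\ref{lem:fn-Omega1:Omega1}, but to exploit the simpler geometry of $q_n$ so that a direct computation of Hamming weights replaces the case analysis through pairwise distances. First I would write $\varphi = q_m(g_1, \dots, g_m)$ with each $g_i \in \clIstar$. Since $\clIstar$ contains no constants, each $g_i$ genuinely depends on a single variable, so $g_i = \pr_{\sigma(i)}^{(n)}$ or $g_i = \neg_{\sigma(i)}^{(n)}$ for some map $\sigma \colon \nset{m} \to \nset{n}$; hence, setting $g(\vect{a}) := (g_1(\vect{a}), \dots, g_m(\vect{a}))$, we have $g(\vect{a}) = \gamma(\vect{a}) + \vect{d}$, where $\gamma(\vect{a})_i = a_{\sigma(i)}$ and $\vect{d} \in \{0,1\}^m$ records which coordinates are negated. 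The hypothesis $q_n \minorant \varphi$ says exactly that $g$ maps the true points of $q_n$, namely $\vect{e}_1, \dots, \vect{e}_n$ and $\vect{1}$, into $q_m^{-1}(1)$, i.e.\ into tuples of Hamming weight $1$ or $m$.

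The two computations I would set up are $g(\vect{1}) = \gamma(\vect{1}) + \vect{d} = \overline{\vect{d}}$ and, writing $c_i := \card{\sigma^{-1}(i)}$, the identity $g(\vect{e}_i) = \vect{e}_{\sigma^{-1}(i)} + \vect{d}$ (the second summand being the characteristic tuple of $\sigma^{-1}(i) \subseteq \nset{m}$). From $\vect{1} \in q_n^{-1}(1)$ and $g(\vect{1}) = \overline{\vect{d}}$ I get $w(\overline{\vect{d}}) = m - w(\vect{d}) \in \{1, m\}$, hence $w(\vect{d}) \in \{0, m-1\}$, which splits the argument into two cases.

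In the case $\vect{d} = \vect{0}$ one has $w(g(\vect{e}_i)) = c_i$, so each $c_i \in \{1, m\}$; since $\sum_i c_i = m$ over $n \geq 3$ summands, no $c_i$ can equal $m$ (that would force the remaining $c_j = 0 \notin \{1,m\}$), whence every $c_i = 1$ and $m = \sum_i c_i = n$. In the case $w(\vect{d}) = m-1$, i.e.\ $\vect{d} = \overline{\vect{e}_r}$ for a unique coordinate $r$, I would compute $w(g(\vect{e}_i)) = (m-1) - c_i + 2\,[\sigma(r) = i]$, where $[\,\cdot\,]$ is $1$ if the condition holds and $0$ otherwise; requiring this to lie in $\{1, m\}$ forces $c_i = m-2$ for every $i \neq \sigma(r)$ and $c_{\sigma(r)} \in \{1, m\}$. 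Summing, $c_{\sigma(r)} + (n-1)(m-2) = m$: here $c_{\sigma(r)} = m$ is impossible for $m, n \geq 3$, while $c_{\sigma(r)} = 1$ yields $(n-1)(m-2) = m-1$, and since $n - 1 \geq 2$ gives $2m - 4 \leq m - 1$, this forces $m = 3$ and then $n = 3$. Either way $m = n$.

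I expect the main obstacle to be the case $w(\vect{d}) = m-1$: Hamming-distance constraints alone (of the kind recorded in Observation~\ref{obs:distances}) are only necessary and fail to exclude spurious profiles, such as $n = 3$ with one $c_i = m-2$ and the others equal to $1$. The extra leverage needed comes precisely from tracking the exact weights $w(g(\vect{e}_i))$ rather than just pairwise distances, together with the single distinguished coordinate $r$ (where $\vect{d}$ vanishes), whose target $\sigma(r)$ carries the $+2$ correction in the weight formula. The remaining steps are elementary integer arithmetic.
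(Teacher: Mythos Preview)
Your argument is correct and follows the same overall architecture as the paper's proof: write $g(\vect{a}) = \vect{a}\sigma + \vect{d}$, use $g(\vect{1}) = \overline{\vect{d}} \in q_m^{-1}(1)$ to split into the cases $\vect{d} = \vect{0}$ and $\vect{d} = \overline{\vect{e}_r}$, and in the first case deduce that every $c_i = \card{\sigma^{-1}(i)}$ equals $1$, whence $m = n$.

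The difference lies in the second case. The paper enumerates the three possible shapes of $\sigma^{-1}(j)$ (namely $\{p\}$, $\nset{m}$, or $\nset{m} \setminus \{p,q\}$), asserts that no three such sets are pairwise disjoint, concludes $\card{\range \sigma} \leq 2 < n$, and derives a contradiction from an index outside the range. Your approach instead computes $w(g(\vect{e}_i)) = (m-1) - c_i + 2[\sigma(r)=i]$ directly and combines the constraint $w(g(\vect{e}_i)) \in \{1,m\}$ with the global identity $\sum_i c_i = m$. This yields either a contradiction or $m = n = 3$. Your route is arguably cleaner, and it is also more careful at the boundary: for $m = 3$ the three singletons $\{p\}$, $\nset{3}\setminus\{p,q_1\}$, $\nset{3}\setminus\{p,q_2\}$ \emph{are} pairwise disjoint, so the paper's disjointness claim is not literally true there; your arithmetic correctly identifies that this edge case forces $m = n = 3$ rather than an outright impossibility.
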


\begin{proof}
Assume $\varphi \in \{ q_m \} \, \clIstar$.
Then $\varphi = q_m (g_1, \dots, g_m)$ for some $g_1, \dots, g_m \in \clIstar$.
Taking $g \colon \{0,1\}^n \to \{0,1\}^m$, $g(\vect{a}) = (g_1(\vect{a}), \dots, g_m(\vect{a})$, we have $\varphi = q_m \circ g$.
We can write $g_i = \pr^{(n)}_{\sigma(i)} + \vak{d_i}$ for some $\sigma \colon \nset{m} \to \nset{n}$ and $d_i \in \{0,1\}$.
Thus, $g(\vect{a}) = \gamma(\vect{a}) + \vect{d}$,
where $\gamma \colon \{0,1\}^n \mapsto \{0,1\}^m$ is $\gamma(\vect{a}) = \vect{a} \sigma + \vect{d}$, where $\vect{a} \sigma = (a_{\sigma(1)}, \dots, a_{\sigma(m)})$ and $\vect{d} = (d_1, \dots, d_m)$.

Now, because $q_n \minorant \varphi$, for every $\vect{a} \in q_n^{-1}(1)$, it holds that $q_m(g(\vect{a})) = 1$, i.e., $g(\vect{a}) \in q_m^{-1}(1)$; in other words, $g$ maps true points of $q_n$ to true points of $q_m$.
We will consider two cases according to the image of $\vect{1}$, a true point of $q_n$, under $g$.
We must have that $g(\vect{q})$ equals either $\vect{1}$ or $\vect{e}_p$ for some $p \in \nset{m}$.

Assume first that $g(\vect{1}) = \vect{e}_p$ for some $p \in \nset{m}$.
Then $\vect{e}_p = \vect{1} \sigma + \vect{d} = \vect{1} + \vect{d}$, so $\vect{d} = \overline{\vect{e}_p}$.
Consequently, $g(\vect{a}) = \vect{a} \sigma + \overline{\vect{e}_p}$ for all $\vect{a}$.
In particular, for each $j \in \nset{n}$, we have $g(\vect{e}_j) = \vect{e}_{\sigma^{-1}(j)} + \overline{\vect{e}_p}$.
Because $g$ maps true points of $q_m$ to true points of $q_n$, we must have that for all $j \in \nset{n}$,
$\sigma^{-1}(j) = \{p\}$ (in which case $g(\vect{e}_j) = \vect{1}$),
$\sigma^{-1}(j) = \nset{m}$ (in which case $g(\vect{e}_j) = \vect{e}_p$,
or
$\sigma^{-1}(j) = \nset{m} \setminus \{p,q\}$ for some $q \in \nset{m}$ with $p \neq q$ (in which case $g(\vect{e}_j) = \vect{e}_q$).
But any collection of three sets of this form are not pairwise disjoint, which implies that $\range \sigma$ has at most two elements because $\sigma$\hyp{}preimages of distinct elements are disjoint.
Because $n \geq 3$, there is an element $r \in \nset{n} \setminus \range \sigma$, and we have
$g(\vect{e}_r) = \vect{e}_r \sigma + \overline{\vect{e}_p} = \vect{0} + \overline{\vect{e}_p} = \overline{\vect{e}_p} \notin q_m^{-1}(1)$.
We have reached a contradiction, which shows that this case is impossible.

Assume now that $g(\vect{1}) = \vect{1}$.
Then $\vect{1} = \vect{1} \sigma + \vect{d} = \vect{1} + \vect{d}$, so $\vect{d} = \vect{0}$.
Consequently, $g(\vect{a}) = \vect{a} \sigma$ for all $\vect{a}$.
In particular, for each $j \in \nset{n}$, we have $g(\vect{e}_j) = \vect{e}_{\sigma^{-1}(j)}$.
Because $g$ maps true points of $q_m$ to true points of $q_n$, we must have that for all $j \in \nset{n}$, $\card{\sigma^{-1}(j)} \in \{1, m\}$; thus $\sigma$ is surjective.
It is not possible that $\card{\sigma^{-1}(j)} = m$ for some $j$, because $n \geq 3$ and $\sigma$\hyp{}preimages of distinct elements are disjoint. 
Consequently, $\card{\sigma^{-1}(j)} = 1$ for all $j \in \nset{n}$, so $\sigma$ is injective.
We conclude that $\sigma$ is bijective, and therefore $m = n$.
\end{proof}

\begin{proposition}
\label{prop:Omega1Omega1}
For $n \geq 5$, $S \subseteq \IN^{+} \setminus \nset{4}$, we have $f_n \in \gen[(\clOmegaOne,\clOmegaOne)]{F_S}$ if and only if $n \in S$.
\end{proposition}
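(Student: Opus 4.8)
The plan is to prove the two directions separately, confining all the real work to the ``only if'' direction, which will reduce cleanly to Lemma~\ref{lem:fn-Omega1}\ref{lem:fn-Omega1:Omega1}. The ``if'' direction is immediate: if $n \in S$, then $f_n \in F_S \subseteq \gen[(\clOmegaOne,\clOmegaOne)]{F_S}$, since the generated clonoid always contains its generating set (the identity being a projection in $\clOmegaOne$).

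For the ``only if'' direction, I would first rewrite the generated clonoid using the formula $\gen[(C_1,C_2)]{F} = C_2(F C_1)$, giving $\gen[(\clOmegaOne,\clOmegaOne)]{F_S} = \clOmegaOne(F_S\,\clOmegaOne)$. Assuming $f_n \in \clOmegaOne(F_S\,\clOmegaOne)$, I would write $f_n = \psi(\chi_1, \dots, \chi_k)$ with $\psi \in \clOmegaOne$ and each $\chi_j \in F_S\,\clOmegaOne$. The key structural observation is that every member of $\clOmegaOne$ is essentially at most unary, so $\psi$ is a constant, a projection, or a negated projection. The constant case is excluded because $f_n$ is not constant (indeed $f_n(\vect{0}) = 0$ while $f_n(\vect{e}_1) = 1$). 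In the projection case one has $f_n = \chi_j$, and in the negated-projection case $\overline{f_n} = \chi_j$; in either case the relevant function lies in $F_S\,\clOmegaOne = \bigcup_{m \in S} \{f_m\}\,\clOmegaOne$. Thus there exist $m \in S$ and $\varphi \in \{f_m\}\,\clOmegaOne$ with $\varphi = f_n$ or $\varphi = \overline{f_n}$.

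It then remains to invoke Lemma~\ref{lem:fn-Omega1}\ref{lem:fn-Omega1:Omega1}, noting that $m, n \geq 5$ because $S \subseteq \IN^{+}\setminus\nset{4}$ and $n \geq 5$ by hypothesis. If $\varphi = f_n$, then $f_n \minorant \varphi$ holds trivially (as an equality), so the lemma yields $\varphi = \vak{1}$ or $m = n$; since $f_n \neq \vak{1}$, we obtain $m = n$. If instead $\varphi = \overline{f_n}$, then $\neg\varphi = f_n$, so $\neg\varphi \minorant f_n$ holds trivially, and the lemma again gives $\varphi = \vak{1}$ or $m = n$; here $\varphi = \overline{f_n} = \vak{1}$ would force $f_n = \vak{0}$, which is false, so once more $m = n$. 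In both cases $n = m \in S$, as required.

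I expect the only delicate step to be the reduction in the second paragraph, namely correctly translating the membership ``$f_n \in \gen[(\clOmegaOne,\clOmegaOne)]{F_S}$'' into the statement ``$f_n$ or $\overline{f_n}$ lies in $\{f_m\}\,\clOmegaOne$ for some $m \in S$'' by exploiting that the outer operation is essentially unary. Once this bookkeeping is in place, the substantive work is carried entirely by Lemma~\ref{lem:fn-Omega1}, and the conclusion follows by observing that equality is a (degenerate) instance of the minorant relation and that $f_n$ is non-constant.
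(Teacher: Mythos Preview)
Your proof is correct and follows essentially the same approach as the paper: reduce via $\gen[(\clOmegaOne,\clOmegaOne)]{F_S} = \clOmegaOne(F_S\,\clOmegaOne)$, use that the outer $\clOmegaOne$ function is essentially unary and $f_n$ is nonconstant to obtain $f_n = \varphi$ or $f_n = \neg\varphi$ with $\varphi \in \{f_m\}\,\clOmegaOne$, and then apply Lemma~\ref{lem:fn-Omega1}\ref{lem:fn-Omega1:Omega1}. Your version is slightly more explicit in ruling out the $\varphi = \vak{1}$ alternative from the lemma, which the paper leaves implicit.
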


\begin{proof}
If $n \in S$, then obviously $f_n \in \gen[(\clOmegaOne,\clOmegaOne)]{F_S}$.

Assume now that $f_n \in \gen[(\clOmegaOne,\clOmegaOne)]{F_S} = \clOmegaOne ( F_S \, \clOmegaOne )$.
Because $f_n$ is not a constant function, we have that $f_n = \varphi$ or $f_n = \neg \varphi$, where $\varphi = f_m(h_1, \dots, h_m)$ for some $m \in S$ and $h_1, \dots, h_m \in \clOmegaOne$.
It follows from Lemma~\ref{lem:fn-Omega1}\ref{lem:fn-Omega1:Omega1} that $m = n$, and therefore $n \in S$.
\end{proof}

\begin{proposition}
\label{prop:Omega1Lambda}
For $n \geq 5$, $S \subseteq \IN^{+} \setminus \nset{4}$, we have $f_n \in \gen[(\clOmegaOne,\clLambda)]{F_S}$ if and only if $n \in S$.
\end{proposition}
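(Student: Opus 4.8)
The plan is to mirror the argument of Proposition~\ref{prop:Omega1Omega1}, replacing the analysis of the outer essentially-unary function by an analysis of the outer conjunction coming from the target clone $\clLambda$. By the generation formula we have $\gen[(\clOmegaOne,\clLambda)]{F_S} = \clLambda(F_S\,\clOmegaOne)$, and the ``if'' direction is immediate: if $n \in S$ then $f_n \in F_S \subseteq \gen[(\clOmegaOne,\clLambda)]{F_S}$.

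For the ``only if'' direction, I would first record the explicit shape of the members of $\clLambda = \clonegen{\wedge, \vak{0}, \vak{1}}$: every such function is either a constant ($\vak{0}$ or $\vak{1}$) or a pure conjunction $\bigwedge_{j \in T} x_j$ over a nonempty set $T$ of its arguments. Assuming $f_n \in \clLambda(F_S\,\clOmegaOne)$, I write $f_n = \psi(\varphi_1, \dots, \varphi_k)$ with $\psi \in \clLambda$ and each $\varphi_j \in F_S\,\clOmegaOne$, so that $\varphi_j \in \{f_{m_j}\}\,\clOmegaOne$ for some $m_j \in S$ and each $\varphi_j$ is $n$-ary. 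Since $f_n$ is non-constant for $n \geq 3$, the outer $\psi$ can be neither $\vak{0}$ nor $\vak{1}$, and hence $f_n = \bigwedge_{j \in T} \varphi_j$ for some nonempty $T$.

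The decisive step is the order-theoretic observation that a conjunction lies pointwise below each of its conjuncts, so $f_n \minorant \varphi_j$ for every $j \in T$. This is precisely the hypothesis of Lemma~\ref{lem:fn-Omega1}\ref{lem:fn-Omega1:Omega1}, which then forces, for each $j \in T$, either $\varphi_j = \vak{1}$ or $m_j = n$. The conjuncts cannot all equal $\vak{1}$ (otherwise $f_n = \vak{1}$), so some conjunct satisfies $\varphi_j \neq \vak{1}$, whence $m_j = n$ and therefore $n = m_j \in S$, as desired.

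I do not expect a serious obstacle here: the heavy combinatorial work has already been done inside Lemma~\ref{lem:fn-Omega1}, and the only genuinely new ingredient relative to Proposition~\ref{prop:Omega1Omega1} is recognizing that the conjunctive structure of $\clLambda$ produces the majorant relation $f_n \minorant \varphi_j$ that couples to that lemma. The one point that requires care is the arity bookkeeping: I must confirm that every relevant index satisfies $\geq 5$ so that Lemma~\ref{lem:fn-Omega1} applies, which is guaranteed by the standing hypotheses $n \geq 5$ and $S \subseteq \IN^{+} \setminus \nset{4}$.
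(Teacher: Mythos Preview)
Your proposal is correct and follows essentially the same route as the paper's proof: write $f_n$ as a conjunction of functions in $F_S\,\clOmegaOne$, observe that $f_n \minorant \varphi_j$ for each conjunct, and apply Lemma~\ref{lem:fn-Omega1}\ref{lem:fn-Omega1:Omega1} to a conjunct that is not $\vak{1}$. The paper is slightly terser in that it jumps directly to the conjunctive form without discussing the constant cases for $\psi$, but the argument is the same.
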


\begin{proof}
If $n \in S$, then obviously $f_n \in \gen[(\clOmegaOne,\clLambda)]{F_S}$.

Assume now that $f_n \in \gen[(\clOmegaOne,\clLambda)]{F_S} = \clLambda ( F_S \, \clOmegaOne )$.
Then $f_n = \bigwedge_{i = 1}^\ell g_i$ for some $g_i \in F_S \, \clOmegaOne$, i.e.,
$g_i = f_{m_i}(h_{i1}, \dots, h_{i m_i})$, where $m_i \in S$ and the inner functions $h_{ij}$ come from $\clOmegaOne$.
Then $f_n \minorant g_i$ for all $i \in \nset{\ell}$.
Because $f_n$ is not the constant function $\vak{1}$, there must exist a $j \in \nset{\ell}$ such that $g_j \neq \vak{1}$.
It follows from Lemma~\ref{lem:fn-Omega1}\ref{lem:fn-Omega1:Omega1} that $m_j = n$, and therefore $n \in S$.
\end{proof}

\begin{proposition}
\label{prop:I0Uinf}
Let $n \geq 5$, $S \subseteq \IN^{+} \setminus \nset{4}$, and $a \in \{0,1\}$.
We have $f_n \in \gen[(\clIa{a},\clUinf)]{F_S}$ if and only if $n \in S$.
\end{proposition}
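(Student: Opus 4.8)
The plan is to use the identity $\gen[(C_1,C_2)]{F} = C_2(F C_1)$, which here reads $\gen[(\clIa{a},\clUinf)]{F_S} = \clUinf(F_S\,\clIa{a})$. The forward implication is immediate: if $n \in S$, then $f_n \in F_S \subseteq \gen[(\clIa{a},\clUinf)]{F_S}$, since every clonoid contains its generating set. So the work is all in the converse.

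For the converse, I would first isolate a structural property of the functions in $\clUinf$. A function $\psi$ of arity $k$ with $\psi^{-1}(1) \neq \emptyset$ lies in $\clUk{\infty}$ precisely when $\bigwedge \psi^{-1}(1) \neq \vect{0}$: the binding instance of the defining condition is $T = \psi^{-1}(1)$, since meets over smaller subsets only grow. This is equivalent to saying that there is a coordinate $c \in \nset{k}$ on which every true point of $\psi$ takes the value $1$, i.e.\ $\psi \minorant \pr_c^{(k)}$. I would record this as a short observation before the main argument.

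Now suppose $f_n \in \clUinf(F_S\,\clIa{a})$. Then $f_n = \psi(\gamma_1, \dots, \gamma_k)$ for some $\psi \in \clUinf$ and $\gamma_1, \dots, \gamma_k \in F_S\,\clIa{a}$, where each $\gamma_i \in \{f_{m_i}\}\,\clIa{a}$ with $m_i \in S$. Since $f_n$ is not identically $0$, neither is $\psi$, so the previous step supplies a coordinate $c$ with $\psi \minorant \pr_c^{(k)}$. Hence, for all $\vect{a}$,
\[
f_n(\vect{a}) = \psi(\gamma_1(\vect{a}), \dots, \gamma_k(\vect{a})) \leq \gamma_c(\vect{a}),
\]
so $f_n \minorant \gamma_c$. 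Because $\gamma_c \in \{f_{m_c}\}\,\clIa{a}$, which equals $\{f_{m_c}\}\,\clIo$ or $\{f_{m_c}\}\,\clIi$ according to whether $a = 0$ or $a = 1$, and because both $n \geq 5$ and $m_c \geq 5$ (as $S \subseteq \IN^{+} \setminus \nset{4}$), Lemma~\ref{lem:fn-Omega1}\ref{lem:fn-Omega1:IoIi} applies and yields $m_c = n$. Therefore $n = m_c \in S$.

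The main obstacle---indeed essentially the only nontrivial point---is the first reduction: recognizing that membership in $\clUinf$ forces $\psi$ to be dominated by a single projection, which collapses the entire composition $\psi(\gamma_1, \dots, \gamma_k)$ to the minorant relation $f_n \minorant \gamma_c$ involving just one outer occurrence of some $f_{m_c}$. Once this is in place, the already-established Lemma~\ref{lem:fn-Omega1}\ref{lem:fn-Omega1:IoIi} carries out the rest, and no genuine case split on $a$ is required beyond observing that $\clIa{a} \in \{\clIo, \clIi\}$.
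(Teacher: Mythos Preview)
Your proof is correct and follows essentially the same route as the paper's: both arguments use that any $\psi \in \clUinf$ with a true point has $\bigwedge \psi^{-1}(1) \neq \vect{0}$, whence some coordinate $c$ satisfies $f_n \minorant \gamma_c$, and then Lemma~\ref{lem:fn-Omega1}\ref{lem:fn-Omega1:IoIi} finishes. Your explicit observation that $f_n$ is not identically $0$ (so $\psi$ has a true point) is a small clarification the paper leaves implicit.
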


\begin{proof}
If $n \in S$, then obviously $f_n \in \gen[(\clIa{a},\clUinf)]{F_S}$.

Assume now that $f_n \in \gen[(\clIa{a},\clUinf)]{F_S} = \clUinf ( F_S \, \clIa{a} )$.
Then $f_n = \mu ( g_1, \dots, g_\ell )$ for some $\mu \in \clUinf$ and $g_1, \dots, g_\ell \in F_S \, \clIa{a}$, so for $i \in \nset{\ell}$, $g_i = f_{m_i}( h_{i1}, \dots, h_{i m_i} )$, where $m_i \in S$ and the inner functions $h_{ij}$ come from $\clIa{a}$.
Because $\mu \in \clUinf$, we have $\bigwedge \mu^{-1}(1) \neq \vect{0}$, so there is a $j \in \nset{\ell}$ such that $f_n^{-1}(1) \subseteq g_j^{-1}(1)$, i.e., $f_n \minorant g_j$.
It follows from Lemma~\ref{lem:fn-Omega1}\ref{lem:fn-Omega1:IoIi} that $m_j = n$, and therefore $n \in S$.
\end{proof}

\begin{proposition}
\label{prop:IstarUinf}
Let $n \geq 3$, $S \subseteq \IN^{+} \setminus \nset{2}$.
We have $q_n \in \gen[(\clIstar,\clUinf)]{Q_S}$ if and only if $n \in S$.
\end{proposition}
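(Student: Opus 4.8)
The plan is to mirror the structure of Proposition~\ref{prop:I0Uinf} almost verbatim, substituting $q_n$ for $f_n$ and invoking Lemma~\ref{lem:qn-Istar} in the role that Lemma~\ref{lem:fn-Omega1}\ref{lem:fn-Omega1:IoIi} played there. The forward implication is immediate: if $n \in S$ then $q_n \in Q_S \subseteq \gen[(\clIstar,\clUinf)]{Q_S}$. For the converse, I would first rewrite the generated clonoid using the identity $\gen[(C_1,C_2)]{F} = C_2(F C_1)$, so that $q_n \in \gen[(\clIstar,\clUinf)]{Q_S} = \clUinf(Q_S\,\clIstar)$ means $q_n = \mu(g_1, \dots, g_\ell)$ for some $\mu \in \clUinf$ and $g_1, \dots, g_\ell \in Q_S\,\clIstar$. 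Each such $g_i$ has the form $g_i = q_{m_i}(h_{i1}, \dots, h_{i m_i})$ with $m_i \in S$ and inner functions $h_{ij} \in \clIstar$, i.e., $g_i \in \{q_{m_i}\}\,\clIstar$.

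The crux is to extract a single index $j$ for which $q_n \minorant g_j$. This is exactly where the defining property of $\clUinf$ is used: since $\mu$ is $1$\hyp{}separating of rank $\infty$, applying the defining condition to $T = \mu^{-1}(1)$ gives $\bigwedge \mu^{-1}(1) \neq \vect{0}$, so there is a coordinate $j \in \nset{\ell}$ with $b_j = 1$ for every $\vect{b} \in \mu^{-1}(1)$. Now for every true point $\vect{a}$ of $q_n$ we have $(g_1(\vect{a}), \dots, g_\ell(\vect{a})) = \mu^{-1}$-preimage lying in $\mu^{-1}(1)$, whence $g_j(\vect{a}) = 1$; that is, $q_n^{-1}(1) \subseteq g_j^{-1}(1)$, which is precisely $q_n \minorant g_j$.

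With $g_j \in \{q_{m_j}\}\,\clIstar$ and $q_n \minorant g_j$ in hand, Lemma~\ref{lem:qn-Istar} (with $m = m_j$) yields $m_j = n$, and since $m_j \in S$ we conclude $n \in S$, completing the proof. I do not expect a genuine obstacle here, as the hard analytic content has already been isolated into Lemma~\ref{lem:qn-Istar}; the only point that needs care is the passage from ``$\mu$ is $1$\hyp{}separating of rank $\infty$'' to the existence of a common coordinate $j$, and then checking that this coordinate indeed forces the minorant relation for $q_n$ rather than, say, an inclusion in the wrong direction. Everything else is bookkeeping identical to the preceding propositions.
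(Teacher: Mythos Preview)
Your proposal is correct and follows essentially the same approach as the paper's own proof: both unpack $q_n \in \clUinf(Q_S\,\clIstar)$ as $q_n = \mu(g_1,\dots,g_\ell)$, use the $1$\hyp{}separating property of $\mu$ to obtain a coordinate $j$ with $q_n \minorant g_j$, and then invoke Lemma~\ref{lem:qn-Istar} to conclude $m_j = n \in S$. The only cosmetic difference is that you spell out the passage from $\bigwedge \mu^{-1}(1) \neq \vect{0}$ to $q_n^{-1}(1) \subseteq g_j^{-1}(1)$ in slightly more detail than the paper does.
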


\begin{proof}
If $n \in S$, then obviously $q_n \in \gen[(\clIstar,\clUinf)]{Q_S}$.

Assume now that $q_n \in \gen[(\clIstar,\clUinf)]{Q_S} = \clUinf ( Q_S \, \clIstar )$.
Then $q_n = \mu ( g_1, \dots, g_\ell )$ for some $\mu \in \clUinf$ and $g_1, \dots, g_\ell \in Q_S \, \clIstar$, so for $i \in \nset{\ell}$, $g_i = q_{m_i}( h_{i1}, \dots, h_{i m_i} )$, where $m_i \in S$ and the inner functions $h_{ij}$ come from $\clIstar$.
Because $\mu \in \clUinf$, we have $\bigwedge \mu^{-1}(1) \neq \vect{0}$, so there is a $j \in \nset{\ell}$ such that $q_n^{-1}(1) \subseteq g_j^{-1}(1)$, i.e., $q_n \minorant g_j$.
It follows from Lemma~\ref{lem:qn-Istar} that $m_j = n$, and therefore $n \in S$.
\end{proof}

\begin{lemma}
\label{lem:fmL}
Let $m$ and $n$ be even integers greater than or equal to $6$.
If $f_n \in \{ f_m \} \clL$ or $\neg f_n \in \{ f_m \} \clL$, then $m = n$.
\end{lemma}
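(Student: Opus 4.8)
The plan is to analyze the composition $f_m(g_1,\dots,g_m)$ with all inner functions $g_i$ drawn from the clone $\clL$ of linear functions, exploiting that every linear function has the form $g_i = \pr_{\sigma(i)}^{(n)} + \vak{d_i}$ or is an affine combination of several coordinates. The cleanest route mirrors the structure of the proof of Lemma~\ref{lem:fn-Omega1}, but now the inner functions can genuinely depend on several arguments simultaneously via $+$. So first I would write each $g_i$ as an affine form $g_i(\vect{a}) = \bigl(\sum_{k \in T_i} a_k\bigr) + d_i$ over $\mathbb{F}_2$, for some $T_i \subseteq \nset{n}$ and $d_i \in \{0,1\}$, and collect these into an affine map $g(\vect{a}) = M\vect{a} + \vect{d}$ where $M$ is an $m \times n$ matrix over $\mathbb{F}_2$. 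The hypothesis $f_n \in \{f_m\}\clL$ (or $\neg f_n \in \{f_m\}\clL$) then says that $f_n = f_m \circ g$ (respectively $\neg f_n = f_m \circ g$) as Boolean functions.

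The key structural fact to exploit is that $f_n$ is invariant under the Hamming geometry: its true points are exactly the weight-$1$ and weight-$(n-1)$ tuples, so the distances between true points take only the values $0,2,n-2,n$ recorded in Observation~\ref{obs:distances}, and the same holds for $f_m$ with $m$ in place of $n$. The crucial observation is that an \emph{affine} map over $\mathbb{F}_2$ distorts Hamming distances in a controlled way: $d(g(\vect{a}),g(\vect{b})) = w(M(\vect{a}+\vect{b}))$, which depends only on $\vect{a}+\vect{b}$. The next step is therefore to track how $g$ transports the true (or false) points of $f_n$ into true points of $f_m$, and to use the distance constraints to force the matrix $M$ to be, up to permutation and after accounting for the constant shift $\vect{d}$, essentially a permutation matrix. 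Concretely, I would argue that $M$ must have exactly one nonzero entry in each row that is actually ``seen'' (rows that are constant can be absorbed into $\vect{d}$), and that distinct columns of $\nset{n}$ must map to distinct columns of $\nset{m}$; the parity of $n$ and $m$ being even is used precisely to rule out the ``complementation'' true points $\overline{\vect{e}_i}$ from collapsing onto $\vect{e}_j$'s in a way that would allow $m \ne n$.

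The main obstacle will be that $\clL$, unlike $\clOmegaOne$, allows inner functions depending on an arbitrary number of coordinates, so the clean argument of Lemma~\ref{lem:fn-Omega1} — where each $g_i$ reads at most one coordinate — no longer applies directly, and I cannot immediately speak of ``the index $\sigma(i)$.'' The hard part is to show that despite this freedom, the affine map $g$ cannot genuinely mix coordinates without destroying the delicate weight spectrum $\{1,n-1\}$ of $f_n$'s true points. The plan to overcome this is to feed $g$ the pairs of true points of $f_n$ at the three admissible distances ($2$, $n-2$, $n$) and read off, via $w(M(\vect{a}+\vect{b}))$, the possible column-sum weights of $M$; since $f_m$'s true points live only at distances $0,2,m-2,m$, one gets rigid arithmetic constraints on the weights of $M$'s columns (and of sums of pairs of columns). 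The evenness of $n$ is what guarantees $\overline{\vect{e}_i}$ is the complement of an odd-weight vector, keeping the two ``bands'' of true points (near $\vect{0}$ and near $\vect{1}$) separated under the affine shift; combining these constraints, I expect to conclude that $M$ is a permutation matrix padded with zero rows, whence $m \ge n$, and a final distance argument (as in Claim~\ref{clm:fn-Omega1:4}, using $d(\vect{e}_1,\overline{\vect{e}_2}) = n-2 < m-2$ when $m > n$) forces $m = n$.
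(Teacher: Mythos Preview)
Your plan is on the same track as the paper's proof: both exploit that every $g_i \in \clL$ is affine over $\mathbb{F}_2$, so the inner map is $\vect{a} \mapsto M\vect{a} + \vect{d}$, and both then pin down the structure of $M$ by feeding in the true points of $f_n$ and reading off constraints coming from the weight spectrum $\{1,m-1\}$ of $f_m^{-1}(1)$. The paper phrases the affineness as ``$\mathbf{g}$ is a homomorphism of the ternary group $(\{0,1\},x+y+z)$,'' which yields the odd\hyp{}sum identity $\mathbf{g}(\sum_{i=1}^{2k+1}\vect{u}_i)=\sum\mathbf{g}(\vect{u}_i)$; this is exactly your matrix formulation in disguise, and the evenness of $n$ is used in the same place you anticipate (to write $\overline{\vect{e}_i}=\sum_{j\neq i}\vect{e}_j$ as an odd sum).

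Two points of divergence are worth flagging. First, your target statement ``$M$ is a permutation matrix padded with zero rows'' is not literally what one obtains: the paper shows instead that each $\mathbf{g}(\vect{e}_i)=M\vect{e}_i+\vect{d}$ lies in $\{\vect{e}_{\sigma(i)},\overline{\vect{e}_{\sigma(i)}}\}$ for some injective $\sigma$, so the $i$\hyp{}th column of $M$ is $\vect{e}_{\sigma(i)}+\vect{d}$ or $\overline{\vect{e}_{\sigma(i)}}+\vect{d}$, which need not have weight $1$. The paper therefore does \emph{not} rely on pairwise distance constraints alone; it uses directly that $\mathbf{g}(\vect{e}_i)\in f_m^{-1}(1)$, then separately proves (via a pigeonhole on the decomposition of $\vect{d}:=\sum_i\mathbf{g}(\vect{e}_i)$ as a sum of two true points) that $\vect{d}\in\{\vect{0},\vect{1}\}$ before the final weight count rules out $m>n$. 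Your distance\hyp{}only plan would need extra work at this step.

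Second, and more importantly, your outline treats the two cases $f_n=f_m\circ g$ and $\neg f_n=f_m\circ g$ uniformly (``transport true or false points to true points''), but the paper does \emph{not}: in the negated case it shows outright impossibility by computing $\mathbf{g}(\vect{e}_{\{1,2,3,4\}})=\sum_{i=1}^3\mathbf{g}(\vect{e}_{\{i,4\}})$, observing the summands are pairwise distinct true points of $f_m$ (hence the sum has weight $3$ or $m-3$ up to complement) while $\vect{e}_{\{1,2,3,4\}}$ is a false point of $f_n$ that must map to a true point of $f_m$. Your permutation\hyp{}matrix conclusion is not the endpoint here, and your plan as written does not cover this asymmetry.
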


\begin{proof}
Assume that $\varphi \in \{ f_m \} \clL$.
Then $\varphi = f_m ( g_1, \dots, g_m )$ for some $g_1, \dots, g_m \in \clL$.

\begin{claim}
\label{lem:fmL:clm:g-prop}
The map $\mathbf{g} = (g_1, \dots, g_n) \colon \{0,1\}^n \to \{0,1\}^m$ has the following properties:
\begin{enumerate}[label=\upshape{(\roman*)}]
\item\label{lem:fmL:clm:g-prop:hom}
For the ternary group, $\mathbf{B} = (\{0,1\}, \mathord{+_3})$ with $\mathord{+_3}(x,y,z) = x + y + z$, $\mathbf{g}$ is a homomorphism from $\mathbf{B}^n$ to $\mathbf{B}^m$, and hence for every odd natural number $2k + 1$,
\begin{equation}
\displaybump
\mathbf{g} \left( \sum_{i = 1}^{2k + 1} \vect{u}_i \right) = \sum_{i = 1}^{2k + 1} \mathbf{g}(\vect{u}_i).
\label{eq:g-odd}
\end{equation}

\item\label{lem:fmL:clm:g-prop:ei}
If $i \in \nset{n}$, then
\[
\displaybump
\mathbf{g}(\overline{\vect{e}_i}) = \sum_{j \in \nset{n} \setminus \{i\}} \mathbf{g}(\vect{e}_j)
\qquad\text{and}\qquad
\mathbf{g}(\vect{e}_i) = \sum_{j \in \nset{n} \setminus \{i\}} \mathbf{g}(\overline{\vect{e}_j}).
\]

\item\label{lem:fmL:clm:g-prop:im}
$\mathbf{g}$ maps $\varphi^{-1}(1)$ into $f_m^{-1}(1)$ and $\varphi^{-1}(0)$ into $f_m^{-1}(0)$.
\end{enumerate}
\end{claim}

\begin{pfclaim}[Proof of Claim~\ref{lem:fmL:clm:g-prop}]
\ref{lem:fmL:clm:g-prop:hom}
The clone $\clL$ is generated by $+$ and $1$.
Since $+$ is a homomorphism $\mathbf{B}^2 \to \mathbf{B}$ and $1$ is a homomorphism $\mathbf{B} \to \mathbf{B}$, it follows that $\mathbf{g}$ is a homomorphism $\mathbf{B}^n \to \mathbf{B}^m$.
This means that $\mathbf{g}(\vect{u} + \vect{v} + \vect{w}) = \mathbf{g}(\vect{u}) + \mathbf{g}(\vect{v}) + \mathbf{g}(\vect{w})$.
Repeated application of this equality yields \eqref{eq:g-odd}.

\ref{lem:fmL:clm:g-prop:ei}
Because $n$ is even, we have $\overline{\vect{e}_i} = \sum_{j \in \nset{n} \setminus \{i\}} \vect{e}_j$ and $\vect{e}_i = \sum_{j \in \nset{n} \setminus \{i\}} \overline{\vect{e}_j}$, and we get the claimed equalities by \ref{lem:fmL:clm:g-prop:hom}.

\ref{lem:fmL:clm:g-prop:im}
For any $\vect{a} \in \varphi^{-1}(1)$ we have $(f_m \circ \mathbf{g})(\vect{a}) = 1$; hence $\mathbf{g}(\vect{a}) \in f_m^{-1}(1)$.
Similarly, for any $\vect{a} \in \varphi^{-1}(0)$ we have $\mathbf{g}(\vect{a}) \in f_m^{-1}(0)$.
\end{pfclaim}

Assume from now on that $\varphi \in \{f_n, \overline{f_n}\}$.

\begin{claim}
\label{lem:fmL:clm:e}
\leavevmode
\begin{enumerate}[label=\upshape{(\roman*)}]
\item\label{lem:fmL:clm:e:e}
For all $i, j \in \nset{n}$ with $i \neq j$, we have $\mathbf{g}(\vect{e}_i) \neq \mathbf{g}(\vect{e}_j)$.
\item\label{lem:fmL:clm:e:e-compl}
For all $i, j \in \nset{n}$ with $i \neq j$, we have $\mathbf{g}(\overline{\vect{e}_i}) \neq \mathbf{g}(\overline{\vect{e}_j})$.
\end{enumerate}
\end{claim}

\begin{pfclaim}[{Proof of Claim~\ref{lem:fn-V:e}}]
\ref{lem:fmL:clm:e:e}
Suppose, to the contrary, that there are $i, j \in \nset{n}$ with $i \neq j$ such that $\mathbf{g}(\vect{e}_i) = \mathbf{g}(\vect{e}_j)$.
Let $k \in \nset{n} \setminus \{i,j\}$.
Then $\vect{e}_{ijk} = \vect{e}_i + \vect{e}_j + \vect{e}_k$, and by \eqref{eq:g-odd},
\begin{align*}
\mathbf{g}(\vect{e}_{ijk})
= \mathbf{g}(\vect{e}_i) + \mathbf{g}(\vect{e}_j) + \mathbf{g}(\vect{e}_k)
= \mathbf{g}(\vect{e}_k),
\end{align*}
which contradicts Claim~\ref{lem:fmL:clm:g-prop}\ref{lem:fmL:clm:g-prop:im} because $f_n(\vect{e}_{ijk}) = 0$ and $f_n(\vect{e}_k) = 1$ and hence $\varphi(\vect{e}_{ijk}) \neq \varphi(\vect{e}_k)$.

\ref{lem:fmL:clm:e:e-compl}
The proof is similar to \ref{lem:fmL:clm:e:e}. We just need to replace each $\vect{e}_I$ with $\overline{\vect{e}_I}$.
\end{pfclaim}

\begin{claim}
\label{lem:fmL:clm:eneg}
There are no $i, j \in \nset{n}$ such that $\mathbf{g}(\vect{e}_i) = \overline{\mathbf{g}(\vect{e}_j)}$ or $\mathbf{g}(\overline{\vect{e}_i}) = \overline{\mathbf{g}(\overline{\vect{e}_j})}$.
\end{claim}

\begin{pfclaim}[Proof of Claim~\ref{lem:fmL:clm:eneg}]
Suppose, to the contrary, that $\mathbf{g}(\vect{e}_i) = \overline{\mathbf{g}(\vect{e}_j)}$.
Then $i \neq j$ and $\mathbf{g}(\vect{e}_i) + \overline{\mathbf{g}(\vect{e}_j)} = \vect{1}$.
Let $k \in \nset{n} \setminus \{i, j\}$.
We get
\[
\mathbf{g}(\vect{e}_{ijk})
= \mathbf{g}(\vect{e}_i) + \mathbf{g}(\vect{e}_j) + \mathbf{g}(\vect{e}_k)
= \vect{1} + \mathbf{g}(\vect{e}_k)
= \overline{\mathbf{g}(\vect{e}_k)}.
\]
Because $f_m$ is reflexive, i.e., $f_m(\vect{a}) = f_m(\overline{\vect{a}})$ for all $\vect{a} \in \{0,1\}^m$, this implies that $f_m(\mathbf{g}(\vect{e}_{ijk})) = f_m(\mathbf{g}(\vect{e}_k))$.
This contradicts Claim~\ref{lem:fmL:clm:g-prop}\ref{lem:fmL:clm:g-prop:im} because $f_n(\vect{e}_{ijk}) = 0$ and $f_n(\vect{e}_k) = 1$ and hence $\varphi(\vect{e}_{ijk}) \neq \varphi(\vect{e}_k)$.

The claim about $\overline{\vect{e}_i}$ and $\overline{\vect{e}_j}$ is proved in a similar way.
\end{pfclaim}

\begin{claim}
\label{lem:fmL:clm:g-inj1}
There are no $\vect{u}, \vect{v} \in \{0,1\}^n$ such that $\vect{u} \neq \vect{v}$, $\vect{u} \neq \overline{\vect{v}}$, $w(\vect{u}) \notin \{1, n-1\}$, $w(\vect{v}) \notin \{1, n-1\}$, and $\mathbf{g}(\vect{u}) = \mathbf{g}(\vect{v})$ or $\mathbf{g}(\vect{u}) = \overline{\mathbf{g}(\vect{v})}$.
\end{claim}

\begin{pfclaim}[Proof of Claim~\ref{lem:fmL:clm:g-inj1}]
Suppose, to the contrary, that $\vect{u}, \vect{v} \in \{0,1\}^n$ are tuples such that $\vect{u} \neq \vect{v}$, $\vect{u} \neq \overline{\vect{v}}$, $w(\vect{u}) \notin \{1, n-1\}$, $w(\vect{v}) \notin \{1, n-1\}$, and $\mathbf{g}(\vect{u}) = \mathbf{g}(\vect{v}) + \vect{c}$ for some $\vect{c} \in \{\vect{0}, \vect{1}\}$.
Let $I := \{ \, i \in \nset{n} \mid u_i = 1 \,\}$ and $J := \{ \, i \in \nset{n} \mid v_i = 1 \, \}$; then $\vect{u} = \vect{e}_I$, $\vect{v} = \vect{e}_J$, $I \neq J$ and $I \neq \nset{n} \setminus J$.
The following sets are pairwise disjoint and their union is $\nset{n}$:
\begin{align*}
A &:= I \cap J, &
B &:= I \setminus J, &
C &:= J \setminus I, &
D &:= \nset{n} \setminus (I \cup J).
\end{align*}
Let $K: = B \cup C = I \triangle J$ (the symmetric difference of $I$ and $J$).
Because $I \neq J$, we have $K \neq \emptyset$.
Because $I \neq \nset{n} \setminus J$, we have $A \cup D = \nset{n} \setminus K \neq \emptyset$.
Therefore, $1 \leq \card{K} \leq n - 1$.

Choose an element $k \in \nset{n}$ as follows.
If $1 \leq \card{K} \leq 2$, then take $k$ from $A \cup D = \nset{n} \setminus K$.
If $n - 2 \leq \card{K} \leq n - 1$, then take $k$ from $K$.
Otherwise $k$ is arbitrary.
Let $K' := K \triangle \{k\}$.
The choice of $k$ ensures that $2 \leq \card{K'} \leq n - 2$; hence, $\vect{e}_K + \vect{e}_k = \vect{e}_{K \triangle \{k\}} = \vect{e}_{K'} \in f_n^{-1}(0)$.
Moreover, $\vect{e}_k \in f_n^{-1}(1)$, so $\varphi(\vect{e}_{K'}) \neq \varphi(\vect{e}_k)$.
However,
\[
\mathbf{g}(\vect{e}_k)
= \mathbf{g}(\vect{e}_I + \vect{e}_J + \vect{e}_{K'})
= \mathbf{g}(\vect{e}_I) + \mathbf{g}(\vect{e}_J) + \mathbf{g}(\vect{e}_{K'})
= \mathbf{g}(\vect{e}_{K'}) + \vect{c}.
\]
Because $f_m$ is reflexive, this implies $f_m(\mathbf{g}(\vect{e}_k)) = f_m(\mathbf{g}(\vect{e}_{K'}) + \vect{c}) = f_m(\mathbf{g}(\vect{e}_{K'}))$,
which contradicts Claim~\ref{lem:fmL:clm:g-prop}\ref{lem:fmL:clm:g-prop:im}.
\end{pfclaim}

The proof now proceeds in different ways according to whether $\varphi = f_n$ or $\varphi = \overline{f_n}$.
We assume first that $\varphi = f_n$.
By Claim~\ref{lem:fmL:clm:g-prop}\ref{lem:fmL:clm:g-prop:im}, there exists a map $\sigma \colon \nset{n} \to \nset{m}$ such that $\sigma(i) = j$ if and only if $\mathbf{g}(\vect{e}_i) \in \{\vect{e}_j, \overline{\vect{e}_j}\}$.
Thus, for every $i \in \nset{n}$, there exists a $\vect{c}_i \in \{\vect{0}, \vect{1}\}$ such that $\mathbf{g}(\vect{e}_i) = \vect{e}_{\sigma(i)} + \vect{c}_i$.
By Claims \ref{lem:fmL:clm:e} and \ref{lem:fmL:clm:eneg}, $\sigma$ is injective, so $m \geq n$.
Let now
\begin{equation}
\vect{d} := \sum_{i = 1}^n \mathbf{g}(\vect{e}_i).
\label{eq:d}
\end{equation}

\begin{claim}
\label{lem:fmL:clm:d}
For the tuple $\vect{d}$ defined above in \eqref{eq:d}, $\vect{d} = \vect{0}$ or $\vect{d} = \vect{1}$.
\end{claim}

\begin{pfclaim}[Proof of Claim~\ref{lem:fmL:clm:d}]
Note that
\[
\vect{d} = \mathbf{g}(\vect{e}_i) + \mathbf{g}(\overline{\vect{e}_i})
\qquad
\text{for all $i \in \nset{n}$.}
\]
Now, fix a $p \in \nset{n}$, and let $\vect{u} := \mathbf{g}(\vect{e}_p)$ and $\vect{v} := \mathbf{g}(\overline{\vect{e}_p})$.
Then $\vect{d} = \vect{u} + \vect{v}$.

By Claim~\ref{lem:fmL:clm:g-prop}\ref{lem:fmL:clm:g-prop:im}, there exist $r, s \in \nset{m}$ such that $\vect{u} \in \{\vect{e}_r, \overline{\vect{e}_r}\}$ and $\vect{v} \in \{\vect{e}_s, \overline{\vect{e}_s}\}$.
Suppose, to the contrary, that $\vect{d} \neq \vect{0}$ and $\vect{d} \neq \vect{1}$, that is, $\vect{u} \neq \vect{v}$ and $\vect{u} \neq \overline{\vect{v}}$, i.e., $r \neq s$.
Then $\vect{d} \in \{\vect{e}_{rs}, \overline{e}_{rs}\}$.
In $\vect{d} = \vect{e}_{rs}$, then the only possible decompositions of $\vect{d}$ into elements of $f_m^{-1}(1)$ are $\vect{e}_r + \vect{e}_s$ and $\overline{\vect{e}_r} + \overline{\vect{e}_s}$.
In $\vect{d} = \overline{\vect{e}_{rs}}$, then the only possible decompositions of $\vect{d}$ into elements of $f_m^{-1}(1)$ are $\vect{e}_r + \overline{\vect{e}_s}$ and $\overline{\vect{e}_r} + \vect{e}_s$.
Consequently, either for all $i \in \nset{n}$, $\{\mathbf{g}(\vect{e}_i), \mathbf{g}(\overline{\vect{e}_i})\} \in \{ \{ \vect{e}_r, \vect{e}_s\}, \{ \overline{\vect{e}_r}, \overline{\vect{e}_s} \} \}$,
or for all $i \in \nset{n}$, $\{\mathbf{g}(\vect{e}_i), \mathbf{g}(\overline{\vect{e}_i})\} \in \{ \{ \vect{e}_r, \overline{\vect{e}_s}\}, \{ \overline{\vect{e}_r}, \vect{e}_s \} \}$.
Because $n \geq 6$, by the pigeonhole principle, there exist $i, j \in \nset{n}$ with $i \neq j$ such that $\mathbf{g}(\vect{e}_i) = \mathbf{}(\vect{e}_j)$, which contradicts the injectivity of $\sigma$.
\end{pfclaim}

By Claim~\ref{lem:fmL:clm:d}, for all $i \in \nset{n}$, $\{ \mathbf{g}(\vect{e}_i), \mathbf{g}(\overline{\vect{e}_i}) \} \subseteq \{ \vect{e}_{\sigma(i)}, \overline{\vect{e}_{\sigma(i)}} \}$.

Suppose now, to the contrary, that $m > n$.
Let $i \in \nset{n}$.
By Claim~\ref{lem:fmL:clm:g-prop}\ref{lem:fmL:clm:g-prop:ei},
\[
\mathbf{g}(\overline{\vect{e}_i})
= \sum_{j \in \nset{n} \setminus \{j\}} \mathbf{g}(\vect{e}_j)
= \sum_{j \in \nset{n} \setminus \{j\}} (\vect{e}_{\sigma(j)} + \vect{c}_j)
= \bigl( \underbrace{\sum_{j \in \nset{n} \setminus \{j\}} \vect{e}_{\sigma(j)}}_{\vect{t}} \bigr) + \vect{c},
\]
for some $\vect{c} \in \{\vect{0}, \vect{1}\}$.
We have $w(\vect{t}) = n - 1$, so $w(\vect{t} + \vect{c}) \in \{n - 1, m - n + 1 \}$.
Because $m > n$, we have $2 \leq w(\vect{t} + \vect{c}) \leq n - 2$,
so $\mathbf{g}(\overline{\vect{e}_i}) \in f_m^{-1}(0)$, a contradiction.
We conclude that $n = m$ whenever $f_n \in \{f_m\} \clL$.

Assume now that $\varphi = \overline{f_n}$.
By Claim~\ref{lem:fmL:clm:g-prop}\ref{lem:fmL:clm:g-prop:im}, $\mathbf{g}$ maps $f_n^{-1}(1)$ into $f_m^{-1}(0)$ and $f_n^{-1}(0)$ into $f_m^{-1}(1)$.
For $i \in \{1, 2, 3\}$, we have $f_n(\vect{e}_i + \vect{e}_4) = 0$, so $\mathbf{g}(\vect{e}_i + \vect{e}_4) = \vect{e}_{j_i} + \vect{c}_i$ for some $j_i \in \nset{m}$ and $\vect{c}_i \in \{ \vect{0}, \vect{1} \}$.
By Claim~\ref{lem:fmL:clm:g-inj1}, $j_1$, $j_2$, and $j_3$ are pairwise distinct.
Now,
\begin{align*}
&
\sum_{i=1}^3 (\vect{e}_i + \vect{e}_4)
= \vect{e}_{\{1,2,3,4,\}} \in f_n^{-1}(0),
\\ &
\mathbf{g} \bigl( \sum_{i=1}^3 (\vect{e}_i + \vect{e}_4) \bigr)
= \sum_{i=1}^3 \mathbf{g}(\vect{e}_i + \vect{e}_4)
= \vect{e}_{\{j_1,j_2,j_3\}} \in f_m^{-1}(0),
\end{align*}
which contradicts Claim~\ref{lem:fmL:clm:g-prop}\ref{lem:fmL:clm:g-prop:im}.
\end{proof}

\begin{proposition}
\label{prop:LOmega1}
Let $M := \{ \, n \in \IN \mid n \geq 6, \, \text{$n$ even} \, \}$. 
Let $n \in M$ and $S \subseteq M$.
We have $f_n \in \gen[(\clL,\clOmegaOne)]{F_S}$ if and only if $n \in S$.
\end{proposition}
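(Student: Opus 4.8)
The plan is to follow the same template used for Propositions~\ref{prop:Omega1Omega1}--\ref{prop:IstarUinf}, with the essential input now being Lemma~\ref{lem:fmL} in place of Lemma~\ref{lem:fn-Omega1}. The ``if'' direction is immediate: if $n \in S$, then $f_n \in F_S \subseteq \gen[(\clL,\clOmegaOne)]{F_S}$.

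For the ``only if'' direction, I would first rewrite the generated clonoid via the identity $\gen[(C_1,C_2)]{F} = C_2(F C_1)$, obtaining $\gen[(\clL,\clOmegaOne)]{F_S} = \clOmegaOne(F_S \clL)$. The key structural observation is that $\clOmegaOne$ consists only of projections, negated projections, and constants, so a left composition with a function from $\clOmegaOne$ returns one of its arguments, the negation of one of its arguments, or a constant. Since $f_n$ is not a constant function, it follows that $f_n = \psi$ or $f_n = \neg\psi$ for some $\psi \in F_S \clL$; that is, $\psi = f_m(g_1, \dots, g_m)$ with $m \in S$ and $g_1, \dots, g_m \in \clL$, whence $\psi \in \{f_m\} \clL$.

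This places us exactly in the hypothesis of Lemma~\ref{lem:fmL}: we have $f_n \in \{f_m\} \clL$ or $\neg f_n \in \{f_m\} \clL$, and since $m, n \in M$, both $m$ and $n$ are even and at least $6$. The lemma then forces $m = n$, so that $n \in S$, which completes the argument.

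The conceptual work has already been front-loaded into Lemma~\ref{lem:fmL}, so I expect no substantial obstacle at this stage. The only point requiring care is the reduction step, namely verifying that an outer application of an essentially unary function cannot mix several distinct members of $F_S \clL$ but must reproduce a single one, possibly negated (the alternative of collapsing to a constant being excluded because $f_n$ takes both values $0$ and $1$). Note finally that the evenness condition and the bound ${}\geq 6$ built into the definition of $M$ are precisely the hypotheses under which Lemma~\ref{lem:fmL} applies, so restricting $n$ and $S$ to $M$ is exactly what makes the closing appeal to that lemma legitimate.
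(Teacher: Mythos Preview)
Your proposal is correct and follows essentially the same approach as the paper: unwind the generated clonoid as $\clOmegaOne(F_S\clL)$, use that $f_n$ is nonconstant to reduce to $f_n = g_i$ or $f_n = \overline{g_i}$ for some $g_i \in \{f_m\}\clL$ with $m \in S$, and then invoke Lemma~\ref{lem:fmL} to conclude $m = n$.
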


\begin{proof}
If $n \in S$, then obviously $f_n \in \gen[(\clL,\clOmegaOne)]{F_S}$.

Assume now that $f_n \in \gen[(\clL,\clOmegaOne)]{F_S} = \clOmegaOne ( F_S \, \clL )$.
Then $f_n = u ( g_1, \dots, g_\ell )$ for some $u \in \clOmegaOne$ and $g_1, \dots, g_\ell \in F_S \, \clL$.
Clearly, $u$ cannot be a constant function, so $u$ is either a projection or a negated projection.
Therefore, for some $i \in \nset{\ell}$,
$f_n = g_i$ or $f_n = \overline{g_i}$, where $g_i = f_m(h_1, \dots, h_m)$ for some $m \in S$ and $h_1, \dots, h_m \in \clL$.
It follows from Lemma~\ref{lem:fmL} that $n = m \in S$.
\end{proof}

\begin{lemma}
\label{lem:fn-V}
Let $m, n \geq 4$.
If $\varphi \in \{ f_m \} \, \clV$, $\varphi(\vect{1}) = 0$, and $f_n \minorant \varphi$, then $m = n$.
\end{lemma}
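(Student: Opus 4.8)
The plan is to write the composition out explicitly and reduce everything to a weight count on the coordinates of the inner map. Write $\varphi = f_m(g_1, \dots, g_m)$ with $g_1, \dots, g_m \in \clV$ of arity $n$, and set $\mathbf{g} = (g_1, \dots, g_m) \colon \{0,1\}^n \to \{0,1\}^m$, so that $\varphi = f_m \circ \mathbf{g}$. Since $\clV = \clonegen{\vee, \vak{0}, \vak{1}}$, each $g_j$ is either the constant $\vak{1}$ or a disjunction $\bigvee_{k \in T_j} x_k$ for some $T_j \subseteq \nset{n}$ (with $T_j = \emptyset$ giving $\vak{0}$). I would record the index sets $O := \{\, j \mid g_j = \vak{1} \,\}$ and $Z := \{\, j \mid g_j = \vak{0} \,\}$ together with, for each $i \in \nset{n}$, the counts $c_i := \card{\{\, j \mid i \in T_j \,\}}$ and $s_i := \card{\{\, j \mid T_j = \{i\} \,\}}$. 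Evaluating the disjunctions directly yields the three weight formulas $w(\mathbf{g}(\vect{e}_i)) = \card{O} + c_i$, $w(\mathbf{g}(\overline{\vect{e}_i})) = m - \card{Z} - s_i$, and $w(\mathbf{g}(\vect{1})) = m - \card{Z}$, which are the only facts about $\mathbf{g}$ I will need.

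Next I would translate the three hypotheses into constraints on these numbers. Since $f_m(\vect{a}) = 1$ exactly when $w(\vect{a}) \in \{1, m-1\}$, the condition $\varphi(\vect{1}) = 0$ becomes $\card{Z} \notin \{1, m-1\}$. The true points of $f_n$ are the tuples $\vect{e}_i$ and $\overline{\vect{e}_i}$, so $f_n \minorant \varphi$ says precisely that $\mathbf{g}$ sends each of them to a true point of $f_m$; that is, $\card{O} + c_i \in \{1, m-1\}$ and $m - \card{Z} - s_i \in \{1, m-1\}$ for every $i$. The first payoff is a lower bound: comparing $m - \card{Z} - s_i \in \{1, m-1\}$ with $m - \card{Z} \notin \{1, m-1\}$ forces $s_i \geq 1$ for every $i$. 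Hence every variable occurs as a standalone disjunct in at least one coordinate of $\mathbf{g}$, these coordinates are distinct for distinct variables, and so $m \geq \sum_i s_i \geq n$.

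The substantive direction is the reverse inequality $m \leq n$, and this is where I expect the real work. I would first dispose of $\card{Z} \geq 2$: then the $\overline{\vect{e}_i}$-constraint forces the common value $s := m - \card{Z} - 1 = s_i \geq 1$ for all $i$, and bounding $\sum_i s_i = ns$ by the number $m - \card{O} - \card{Z} \leq s+1$ of disjunction coordinates yields $(n-1)s \leq 1$, which is impossible for $n \geq 4$. Together with $\card{Z} \notin \{1, m-1\}$ this leaves $\card{Z} = 0$, whence $s_i \in \{1, m-1\}$; a similar count excludes $s_i = m-1$, so $s_i = 1$ for every $i$ and there are exactly $n$ single-variable coordinates. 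It then remains to split on $\card{O}$ using $\card{O} + c_i \in \{1, m-1\}$: if $\card{O} \geq 1$ then $c_i = m - 1 - \card{O}$ is constant, so each variable is absent from exactly one disjunction coordinate, contradicting the fact that variable $i$ is absent from all $n-1$ single-variable coordinates $x_{i'}$ with $i' \neq i$; and if $\card{O} = 0$ then $c_i \in \{1, m-1\}$ with $c_i = 1 + r_i$, where $r_i$ counts the multi-variable coordinates containing $i$, so the value $m-1$ is excluded (there are only $m - n$ multi-variable coordinates), forcing $r_i = 0$ for all $i$ and hence no multi-variable coordinates at all, so $m = n$.

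The main obstacle is precisely this upper bound $m \leq n$: one must keep the two families of constraints — those from the weight-$1$ points $\vect{e}_i$, governed by $c_i$ and $\card{O}$, and those from the weight-$(n-1)$ points $\overline{\vect{e}_i}$, governed by $s_i$ and $\card{Z}$ — simultaneously under control, and the stray constant and multi-variable coordinates must be excluded by global counting rather than pointwise reasoning. The hypothesis $\varphi(\vect{1}) = 0$ is the essential lever throughout, as it is exactly what forbids $\card{Z} \in \{1, m-1\}$ and thereby forces $s_i \geq 1$; without it the estimates collapse.
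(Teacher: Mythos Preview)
Your argument is correct and complete; the weight formulas, the derivation of $s_i \geq 1$ from $\varphi(\vect{1}) = 0$, and the subsequent case splits on $\card{Z}$ and $\card{O}$ all check out.

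The route, however, is genuinely different from the paper's. The paper never writes down the index sets $T_j$ or the counts $c_i$, $s_i$; instead it treats $\mathbf{g}$ abstractly as a $\vee$-semilattice homomorphism $\{0,1\}^n \to \{0,1\}^m$ and argues qualitatively. It first proves the injectivity statements $\mathbf{g}(\overline{\vect{e}_i}) \neq \mathbf{g}(\overline{\vect{e}_j})$ and $\mathbf{g}(\vect{e}_i) \neq \mathbf{g}(\vect{e}_j)$ for $i \neq j$ (the first directly from $\varphi(\vect{1}) = 0$ via $\mathbf{g}(\overline{\vect{e}_i}) \vee \mathbf{g}(\overline{\vect{e}_j}) = \mathbf{g}(\vect{1})$, the second reduced to the first using $\mathbf{g}(\overline{\vect{e}_i}) = \bigvee_{k \neq i} \mathbf{g}(\vect{e}_k)$), then shows $\mathbf{g}(\vect{e}_i)$ can never equal some $\overline{\vect{e}_j}$, so that there is an injective $\sigma \colon \nset{n} \to \nset{m}$ with $\mathbf{g}(\vect{e}_i) = \vect{e}_{\sigma(i)}$; finally $w(\mathbf{g}(\overline{\vect{e}_i})) = n-1$ together with $\mathbf{g}(\overline{\vect{e}_i}) \in f_m^{-1}(1)$ forces $n-1 \in \{1, m-1\}$, hence $m = n$. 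Your approach replaces these structural claims with a direct parametrisation of the elements of $\clV$ and global counting inequalities; this is more elementary and self-contained, at the cost of a slightly longer case analysis on $\card{Z}$ and $\card{O}$. The paper's approach has the advantage that the same semilattice-homomorphism template is reused (mutatis mutandis) in the companion lemma for $\clL$, whereas your counting argument is tailored specifically to $\clV$.
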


\begin{proof}
Assume $\varphi \in \{ f_m \} \, \clV$ and $f_n \minorant \varphi$.
Then $\varphi = f_m \circ \mathbf{g}$, where $\mathbf{g} \in (\clV^{(n)})^m$.

Let $\mathbf{B} = (\{0,1\}, \mathord{\vee})$ be the semigroup with neutral element $0$.

\begin{claim}
\label{lem:fn-V:g-prop}
The function $\mathbf{g}$ has the following properties.
\begin{enumerate}[label=\upshape{(\roman*)}]
\item\label{lem:fn-V:g-prop:hom}
$\mathbf{g}$ is a homomorphism from $\mathbf{B}^n$ to $\mathbf{B}^m$, i.e., $\mathbf{g}(\vect{u} \vee \vect{v}) = \mathbf{g}(\vect{u}) \vee \mathbf{g}(\vect{v})$ for all $\vect{u}, \vect{v} \in \mathbf{B}^n$.

\item
\label{lem:fn-V:g-prop:sum}
If $i \in \nset{n}$, then
\[
\displaybump
\mathbf{g}(\overline{\vect{e}_i}) = \bigvee_{j \in \nset{n} \setminus \{i\}} \mathbf{g}(\vect{e}_j).
\]

\item
\label{lem:fn-V:g-prop:im}
$\mathbf{g}$ maps $f_n^{-1}(1)$ into $f_m^{-1}(1)$.
\end{enumerate}
\end{claim}

\begin{pfclaim}[{Proof of Claim~\ref{lem:fn-V:g-prop}}]
\ref{lem:fn-V:g-prop:hom}
The clone $\clV$ is generated by $\vee$ and the constants $0$ and $1$.
The claim follows from the fact that $\vee$ is a homomorphism $\mathbf{B}^2 \to \mathbf{B}$ and $0$ and $1$ are homomorphisms $\mathbf{B} \to \mathbf{B}$.

\ref{lem:fn-V:g-prop:sum}
This follows from the homomorphism property \ref{lem:fn-V:g-prop:hom} and from the fact that $\overline{\vect{e}_i} = \bigvee_{j \in \nset{n} \setminus \{i\}} \mathbf{e}_j$.

\ref{lem:fn-V:g-prop:im}
Because $f_n \minorant f_m \circ \mathbf{g}$, we have that $f_n^{-1}(1) \subseteq (f_m \circ \mathbf{g})^{-1}(1)$.
Consequently, $\mathbf{g}$ maps $f_n^{-1}(1)$ into $f_m^{-1}(1)$.
\end{pfclaim}

\begin{claim}
\label{lem:fn-V:e}
\leavevmode
\begin{enumerate}[label=\upshape{(\roman*)}]
\item\label{lem:fn-V:e:e-compl}
For all $i, j \in \nset{n}$ with $i \neq j$, we have $\mathbf{g}(\overline{\vect{e}_i}) \neq \mathbf{g}(\overline{\vect{e}_j})$.
\item\label{lem:fn-V:e:e}
For all $i, j \in \nset{n}$ with $i \neq j$, we have $\mathbf{g}(\vect{e}_i) \neq \mathbf{g}(\vect{e}_j)$.
\end{enumerate}
\end{claim}

\begin{pfclaim}[{Proof of Claim~\ref{lem:fn-V:e}}]
\ref{lem:fn-V:e:e-compl}
Suppose, to the contrary, that there are $i, j \in \nset{n}$ with $i \neq j$ such that $\mathbf{g}(\overline{\vect{e}_i}) = \mathbf{g}(\overline{\vect{e}_j}) =: \vect{d}$.
We have $\vect{d} \in f_m^{-1}(1)$ by Claim~\ref{lem:fn-V:g-prop}\ref{lem:fn-V:g-prop:im}.
Moreover, $\overline{\vect{e}_i} \vee \overline{\vect{e}_j} = \vect{1}$, so
\[
\mathbf{g}(\vect{1}) =
\mathbf{g}(\overline{\vect{e}_i} \vee \overline{\vect{e}_j}) =
\mathbf{g}(\overline{\vect{e}_i}) \vee \mathbf{g}(\overline{\vect{e}_j}) =
\vect{d} \vee \vect{d} = \vect{d}.
\]
Therefore, $\varphi(\vect{1}) = f_m(\mathbf{g}(\vect{1})) = f_m(\vect{d}) = 1$,
which contradicts our assumption that $\varphi(\vect{1}) = 0$.

\ref{lem:fn-V:e:e}
Suppose, to the contrary, that there are $i, j \in \nset{n}$ with $i \neq j$ such that $\mathbf{g}(\vect{e}_i) = \mathbf{g}(\vect{e}_j)$.
Then, by Claim~\ref{lem:fn-V:g-prop}\ref{lem:fn-V:g-prop:sum},
\begin{align*}
\mathbf{g}(\overline{\vect{e}_i})
= \bigvee_{k \in \nset{n} \setminus \{i\}} \mathbf{g}(\vect{e}_k)
& = \mathbf{g}(\vect{e}_j) \vee \bigvee_{k \in \nset{n} \setminus \{i,j\}} \mathbf{g}(\vect{e}_k)
\\ &= \mathbf{g}(\vect{e}_i) \vee \bigvee_{k \in \nset{n} \setminus \{i,j\}} \mathbf{g}(\vect{e}_k)
= \bigvee_{k \in \nset{n} \setminus \{j\}} \mathbf{g}(\vect{e}_k)
= \mathbf{g}(\overline{\vect{e}_j}),
\end{align*}
which is a contradiction to part \ref{lem:fn-V:e:e-compl}.
\end{pfclaim}

\begin{claim}
\label{lem:fn-V:e-e-compl}
For all $i \in \nset{n}$ and for all $j \in \nset{m}$, $\mathbf{g}(\vect{e}_i) \neq \overline{\vect{e}_j}$.
\end{claim}

\begin{pfclaim}[{Proof of Claim~\ref{lem:fn-V:e-e-compl}}]
Suppose, to the contrary, that there are $i \in \nset{n}$ and $j \in \nset{m}$ such that $\mathbf{g}(\vect{e}_i) = \overline{\vect{e}_j}$.
Then, by Claim~\ref{lem:fn-V:g-prop}\ref{lem:fn-V:g-prop:sum}, we get for all $\ell \in \nset{n} \setminus \{i\}$ that
\begin{align*}
\mathbf{g}(\overline{\vect{e}_\ell})
= \bigvee_{k \in \nset{n} \setminus \{\ell\}} \mathbf{g}(\vect{e}_k)
= \mathbf{g}(\vect{e}_i) \vee \bigvee_{k \in \nset{n} \setminus \{\ell, i\}} \mathbf{g}(\vect{e}_k)
= \overline{\vect{e}_j} \vee \bigvee_{k \in \nset{n} \setminus \{\ell, i\}} \mathbf{g}(\vect{e}_k)
\geq \overline{\vect{e}_j},
\end{align*}
where $\leq$ denotes the natural ordering of the semilattice $\mathbf{B}^m$ induced by the ordering $0 < 1$ of $\mathbf{B}$.
Since the only tuples that are greater than or equal to $\overline{\vect{e}_j}$ are $\overline{\vect{e}_j} \in f_m^{-1}(1)$ and $\vect{1} \in f_m^{-1}(0)$, it follows from Claim~\ref{lem:fn-V:g-prop}\ref{lem:fn-V:g-prop:im} that $\mathbf{g}(\overline{\vect{e}_\ell}) = \overline{\vect{e}_j}$ for all $\ell \in \nset{n} \setminus \{i\}$.
This contradicts Claim~\ref{lem:fn-V:e}, because $n \geq 4$.
\end{pfclaim}

It follows from Claims \ref{lem:fn-V:g-prop}\ref{lem:fn-V:g-prop:im} and \ref{lem:fn-V:e-e-compl} that there is a map $\sigma \colon \nset{n} \to \nset{m}$ such that $\mathbf{g}(\vect{e}_i) = \mathbf{e}_{\sigma(i)}$ for all $i \in \nset{n}$.
By Claim~\ref{lem:fn-V:e}\ref{lem:fn-V:e:e}, $\sigma$ is injective; hence $m \geq n$.
Let now $i \in \nset{n}$, and let $\vect{b} := \mathbf{g}(\overline{\vect{e}_i})$.
Since $\overline{\vect{e}_i} \in f_n^{-1}(1)$, Claim~\ref{lem:fn-V:g-prop}\ref{lem:fn-V:g-prop:im} implies that $\vect{b} \in f_m^{-1}(1)$.
By Claim~\ref{lem:fn-V:g-prop}\ref{lem:fn-V:g-prop:sum} we have
\[
\vect{b}
= \mathbf{g}(\overline{\vect{e}_i})
= \bigvee_{j \in \nset{n} \setminus \{i\}} \mathbf{g}(\vect{e}_j)
= \bigvee_{j \in \nset{n} \setminus \{i\}} \vect{e}_{\sigma(j)}.
\]
Because $\sigma$ is injective, exactly $n - 1$ entries of $\vect{b}$ are equal to $1$.
In order for $\vect{b}$ to be in $f_m^{-1}(1)$, we need to have $w(\vect{b}) \in \{1, m-1\}$.
Thus $n \in \{2, m\}$.
Because we have assumed that $n \geq 4$,
we conclude that $n = m$.
\end{proof}

\begin{proposition}
\label{prop:VLambda}
For $n \geq 4$, $S \subseteq \IN^{+} \setminus \nset{3}$, we have $f_n \in \gen[(\clV,\clLambda)]{F_S}$ if and only if $n \in S$.
\end{proposition}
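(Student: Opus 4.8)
The forward implication is immediate: if $n \in S$, then $f_n \in F_S \subseteq \gen[(\clV,\clLambda)]{F_S}$. For the converse, the plan is to unfold the generated clonoid via the identity $\gen[(\clV,\clLambda)]{F_S} = \clLambda ( F_S \, \clV )$ and then reduce everything to a single application of Lemma~\ref{lem:fn-V}. Assuming $f_n \in \clLambda ( F_S \, \clV )$, I would first use the fact that every member of $\clLambda$ is either a constant function or a conjunction of some of its arguments; since $f_n$ is not constant, the outer operation cannot be constant, and hence $f_n = \bigwedge_{j=1}^\ell g_j$ for some $g_1, \dots, g_\ell \in F_S \, \clV$. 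Each such factor has the form $g_j = f_{m_j}(h_{j1}, \dots, h_{j m_j})$ with $m_j \in S$ and inner functions $h_{ji} \in \clV$, so that $g_j \in \{ f_{m_j} \} \, \clV$.

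Next I would extract, for one well-chosen factor, the two hypotheses that Lemma~\ref{lem:fn-V} demands. Since $f_n = \bigwedge_j g_j$ is the pointwise meet of the $g_j$, we have $f_n \minorant g_j$ for every $j$. The remaining hypothesis, that the factor vanishes at $\vect{1}$, is the crux: I would observe that $f_n(\vect{1}) = 0$, because $w(\vect{1}) = n \notin \{1, n-1\}$ for $n \geq 4$. As the conjunction $\bigwedge_j g_j$ takes the value $0$ at $\vect{1}$, at least one factor $g_{j_0}$ must satisfy $g_{j_0}(\vect{1}) = 0$. For this factor we have $g_{j_0} \in \{ f_{m_{j_0}} \} \, \clV$ with $m_{j_0} \in S \subseteq \IN^{+} \setminus \nset{3}$ (so $m_{j_0} \geq 4$), together with $g_{j_0}(\vect{1}) = 0$ and $f_n \minorant g_{j_0}$. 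Since also $n \geq 4$, Lemma~\ref{lem:fn-V} applies and yields $m_{j_0} = n$, whence $n = m_{j_0} \in S$.

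The main obstacle is precisely securing the hypothesis $g_{j_0}(\vect{1}) = 0$ needed to invoke Lemma~\ref{lem:fn-V}; this is what distinguishes the present argument from the analogous Proposition~\ref{prop:Omega1Lambda}, where it sufficed to locate a factor that is not the constant $\vak{1}$. Here the semilattice source clone $\clV$ forces us to control the value at $\vect{1}$ rather than merely to exclude a constant factor, but the elementary identity $f_n(\vect{1}) = 0$ combined with the conjunctive decomposition delivers this for free. All of the genuine work --- the analysis of how a $\clV$-substitution instance of $f_m$ can majorize $f_n$ while vanishing at $\vect{1}$ --- is already packaged inside Lemma~\ref{lem:fn-V}, so no further case distinctions should be required.
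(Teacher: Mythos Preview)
Your proof is correct and follows essentially the same route as the paper: decompose $f_n$ as a conjunction of functions from $F_S\,\clV$, use $f_n(\vect{1})=0$ to locate a factor $g_{j_0}$ with $g_{j_0}(\vect{1})=0$, and then invoke Lemma~\ref{lem:fn-V} to conclude $m_{j_0}=n\in S$. Your write-up is slightly more explicit about why $f_n(\vect{1})=0$ and why $m_{j_0}\geq 4$, but the argument is otherwise identical.
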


\begin{proof}
If $n \in S$, then obviously $f_n \in \gen[(\clV,\clLambda)]{F_S}$.

Assume now that $f_n \in \gen[(\clV,\clLambda)]{F_S} = \clLambda ( F_S \clV )$.
Then $f_n = \lambda (\varphi_1, \dots, \varphi_p)$ for some $\lambda \in \clLambda$ and $\varphi_1, \dots, \varphi_p \in F_S \clV$.
Because $f_n$ is not a constant function, $\lambda$ is not constant.
Without loss of generality, we may assume that $\lambda = x_1 \wedge x_2 \wedge \dots \wedge x_p$, and so $f_n = \bigwedge_{i \in \nset{p}} \varphi_i$.
Consequently, $f_n$ is a minorant of each $\varphi_i$ ($i \in \nset{p}$).
Moreover, for each $\vect{a} \in f_n^{-1}(0)$, there is a $j \in \nset{p}$ such that $\varphi_j(\vect{a}) = 0$.
In particular, there is a $q \in \nset{p}$ such that $\varphi_q(\vect{1}) = 0$.
Because $\varphi_q \in \{ f_m \} \clV$ for some $m \in S$, it follows from Lemma~\ref{lem:fn-V} that $n = m \in S$.
\end{proof}

\begin{definition}
Let $f \in \clAll^{(n)}$.
A chain $\vect{u}_0 < \vect{u}_1 < \dots < \vect{u}_\ell$ of tuples in $\{0,1\}^n$ is \emph{alternating} in $f$ if for all $i \in \{0, \dots, \ell-1\}$, $f(\vect{u}_i) \neq f(\vect{u}_{i+1})$.
The number $\ell$ is the \emph{length} of the chain.
The \emph{alternation number} of $f$, denoted $\Alt(f)$, is the length of the longest alternating chain in $f$.
\end{definition}

\begin{proposition}
\label{prop:fi-VJ}
Let $n \geq 4$ and $S \subseteq \IN^{+} \setminus \nset{3}$.
\begin{enumerate}[label=\upshape{(\roman*)}]
\item\label{prop:fi-VJ:VJ}
$\overline{f_n} \notin \gen[(\clV,\clIc)]{F_S}$.
\item\label{prop:fi-VJ:VIstar-neg}
$\overline{f_n} \in \gen[(\clV,\clIstar)]{F_S}$ if and only if $n \in S$.
\item\label{prop:fi-VJ:VOmega1}
$f_n \in \gen[(\clV,\clOmegaOne)]{F_S}$ if and only if $n \in S$.
\end{enumerate}
\end{proposition}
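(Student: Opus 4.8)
The plan is to obtain parts~\ref{prop:fi-VJ:VIstar-neg} and~\ref{prop:fi-VJ:VOmega1} as consequences of part~\ref{prop:fi-VJ:VJ} together with Lemma~\ref{lem:fn-V}, and to prove part~\ref{prop:fi-VJ:VJ} directly by a join-semilattice argument in the spirit of the proof of Lemma~\ref{lem:fn-V}. Recall that $\gen[(C_1,C_2)]{F} = C_2(F C_1)$, so $\gen[(\clV,\clIc)]{F_S} = F_S \clV$, $\gen[(\clV,\clIstar)]{F_S} = \clIstar(F_S\clV)$, and $\gen[(\clV,\clOmegaOne)]{F_S} = \clOmegaOne(F_S\clV)$. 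In all three cases the members are of the form $u(g_1,\dots,g_\ell)$ with $g_i \in F_S\clV$, i.e.\ $g_i \in \{f_m\}\clV$ for some $m \in S$; since $f_n$ and $\overline{f_n}$ are not constant, the outer function $u$ (essentially at most unary) can only be a projection or a negated projection, giving two alternatives: the target function equals $g_i$ or $\overline{g_i}$.

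For part~\ref{prop:fi-VJ:VOmega1}, $f_n \in \clOmegaOne(F_S\clV)$ yields $f_n = g_i$ or $f_n = \overline{g_i}$ with $g_i \in \{f_m\}\clV$, $m \in S$. The case $f_n = \overline{g_i}$ forces $\overline{f_n} = g_i \in \{f_m\}\clV \subseteq \gen[(\clV,\clIc)]{F_S}$, which is impossible by part~\ref{prop:fi-VJ:VJ}; in the remaining case $f_n = g_i \in \{f_m\}\clV$ I apply Lemma~\ref{lem:fn-V} with $\varphi = f_n$, using $\varphi(\vect{1}) = f_n(\vect{1}) = 0$ (as $n \geq 4$) and $f_n \minorant \varphi$ trivially, to conclude $m = n$, hence $n \in S$. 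Part~\ref{prop:fi-VJ:VIstar-neg} is identical after writing $\overline{f_n} = g_i$ or $\overline{f_n} = \overline{g_i}$: the first alternative is excluded by part~\ref{prop:fi-VJ:VJ}, and the second gives $f_n = g_i \in \{f_m\}\clV$, to which Lemma~\ref{lem:fn-V} again applies. The ``if'' directions are immediate: if $n \in S$ then $f_n \in F_S \subseteq F_S\clV$ (take projections as inner functions), so $f_n \in \gen[(\clV,\clOmegaOne)]{F_S}$ and $\overline{f_n} = \neg(f_n) \in \clIstar(F_S\clV)$.

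The substance is part~\ref{prop:fi-VJ:VJ}. Suppose, for a contradiction, that $\overline{f_n} = f_m \circ \mathbf{g}$ with $m \in S$ and $\mathbf{g} = (g_1,\dots,g_m) \in (\clV^{(n)})^m$. Since $\clV = \clonegen{\mathord{\vee}, \vak{0}, \vak{1}}$, each component preserves $\vee$, so $\mathbf{g}$ is a homomorphism of join-semilattices; in particular it is monotone and $\mathbf{g}(\vect{0})$ is the least element of its image, so $\mathbf{g}(\vect{0}) \leq \mathbf{g}(\vect{a})$ for all $\vect{a}$. The identity $\overline{f_n} = f_m \circ \mathbf{g}$ means $\mathbf{g}$ maps false points of $f_n$ into $f_m^{-1}(1)$ and true points of $f_n$ into $f_m^{-1}(0)$, while the true points of $f_m$ are exactly the tuples of Hamming weight $1$ or $m-1$.

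Now track a few images. Since $\vect{0}$ is a false point of $f_n$, $\mathbf{g}(\vect{0}) \in f_m^{-1}(1)$ has weight $\geq 1$; hence every $\mathbf{g}(\vect{e}_i) \geq \mathbf{g}(\vect{0})$ has weight $\geq 1$, and as $\vect{e}_i$ is a true point of $f_n$, $\mathbf{g}(\vect{e}_i) \in f_m^{-1}(0)$ forces weight $\geq 2$. For $i \neq j$ the weight-$2$ tuple $\vect{e}_{\{i,j\}}$ is a false point of $f_n$ (this is where $n \geq 4$ enters), so $\mathbf{g}(\vect{e}_{\{i,j\}}) = \mathbf{g}(\vect{e}_i) \vee \mathbf{g}(\vect{e}_j) \in f_m^{-1}(1)$; dominating $\mathbf{g}(\vect{e}_i)$ of weight $\geq 2$, it must have weight $m-1$. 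Likewise $\vect{1}$ is a false point of $f_n$, so $\mathbf{g}(\vect{1}) = \bigvee_i \mathbf{g}(\vect{e}_i) \in f_m^{-1}(1)$; since it dominates the weight-$(m-1)$ tuple $\mathbf{g}(\vect{e}_1) \vee \mathbf{g}(\vect{e}_2)$, it equals it, and writing $\overline{\vect{e}_r} := \mathbf{g}(\vect{1})$ every pairwise join $\mathbf{g}(\vect{e}_i) \vee \mathbf{g}(\vect{e}_j)$ (weight $m-1$, below $\overline{\vect{e}_r}$) collapses to $\overline{\vect{e}_r}$. Finally each $\overline{\vect{e}_k}$ is a true point of $f_n$, so $\mathbf{g}(\overline{\vect{e}_k}) \in f_m^{-1}(0)$; but $\mathbf{g}(\overline{\vect{e}_k}) = \bigvee_{j \neq k} \mathbf{g}(\vect{e}_j)$ is squeezed between $\overline{\vect{e}_r}$ (the join of any two of the $\geq 2$ summands with $j \neq k$) and $\mathbf{g}(\vect{1}) = \overline{\vect{e}_r}$, hence equals $\overline{\vect{e}_r} \in f_m^{-1}(1)$, a contradiction. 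The main obstacle is exactly this step: pinning $\mathbf{g}(\vect{1})$ to weight $m-1$ so that all pairwise joins collapse to a single $\overline{\vect{e}_r}$, which then traps $\mathbf{g}(\overline{\vect{e}_k})$ on a true point of $f_m$ where a false point was required.
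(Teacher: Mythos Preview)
Your argument is correct. For parts~\ref{prop:fi-VJ:VIstar-neg} and~\ref{prop:fi-VJ:VOmega1} you proceed essentially as the paper does: reduce to the alternative ``$f_n \in \{f_m\}\clV$ or $\overline{f_n} \in \{f_m\}\clV$'', eliminate the second via part~\ref{prop:fi-VJ:VJ}, and conclude $m=n$ in the first. The only cosmetic difference is that the paper routes the first alternative through Proposition~\ref{prop:VLambda}, which is itself just a thin wrapper around Lemma~\ref{lem:fn-V}; you invoke Lemma~\ref{lem:fn-V} directly, which is slightly cleaner.

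Part~\ref{prop:fi-VJ:VJ} is where your approach genuinely diverges. The paper appeals to an external result (\cite[Proposition~6.4]{Lehtonen-discmono}) characterising the $\clM$-minor quasiorder via the alternation number $\Alt$ and the value at $\vect{0}$: since $\Alt(\overline{f_n}) = \Alt(f_m) = 4$ but $\overline{f_n}(\vect{0}) = 1 \neq 0 = f_m(\vect{0})$, one gets $\overline{f_n} \notin \{f_m\}\clM \supseteq \{f_m\}\clV$. Your argument, by contrast, is entirely self-contained: you track weights along the join-semilattice homomorphism $\mathbf{g}$, pin $\mathbf{g}(\vect{1})$ to a single coatom $\overline{\vect{e}_r}$, and then squeeze $\mathbf{g}(\overline{\vect{e}_k})$ between $\overline{\vect{e}_r}$ and itself to force a contradiction. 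Both are valid; the paper's route is a one-line citation, while yours avoids any dependence on the $\clM$-minor machinery and keeps the proof internal to the present paper's toolkit.
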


\begin{proof}
\ref{prop:fi-VJ:VJ}
Observe that $\Alt(f_m) = \Alt(\overline{f_m}) = 4$ for all $m \geq 4$.
By \cite[Proposition~6.4]{Lehtonen-discmono}, $f \minor_\clM g$ if and only if $\Alt(f) < \Alt(g)$ or $(\Alt(f), f(\vect{0})) = (\Alt(g), g(\vect{0}))$.
Consequently, for all $n \in \IN$ and for all $S \subseteq \IN$, $\overline{f_n} \notin F_S \clM \supseteq F_S \clV = \gen[(\clV,\clIc)]{F_S}$.

\ref{prop:fi-VJ:VIstar-neg}
Because
\[
\gen[(\clV,\clIstar)]{F_S}
= \clIstar ( F_S \clV )
= ( \clIc \cup \overline{\clIc} ) ( F_S \clV )
= \clIc ( F_S \clV ) \cup \overline{\clIc} ( F_S \clV )
= \gen[(\clV,\clIc)]{F_S} \cup \overline{\gen[(\clV,\clIc)]{F_S}},
\]
it follows from part \ref{prop:fi-VJ:VJ} that $\overline{f_n} \in \gen[(\clV,\clIstar)]{F_S}$ if and only if $f_n \in \gen[(\clV,\clIc)]{F_S}$.
It follows from Proposition~\ref{prop:VLambda} that this holds if and only if $n \in S$.

\ref{prop:fi-VJ:VOmega1}
It is clear that if $n \in S$, then $f_n \in \gen[(\clV,\clOmegaOne)]{F_S}$.
Conversely, if
$f_n \in \gen[(\clV,\clOmegaOne)]{F_S}$,
then $f_n = u (g_1, \dots, g_m)$ for some $u \in \clOmegaOne$ and $g_1, \dots, g_m \in F_S \clV$.
Clearly, $u$ cannot be a constant function, so $u \in \clIstar$, and we have
\[
f_n \in
\clIstar ( F_S \clV )
= ( \clIc \cup \overline{\clIc} ) ( F_S \clV )
= \clIc ( F_S \clV ) \cup \overline{\clIc} ( F_S \clV )
= \gen[(\clV,\clIc)]{F_S} \cup \overline{\gen[(\clV,\clIc)]{F_S}}.
\]
Then either $f_n$ of $\overline{f_n}$ is in $\gen[(\clV,\clIc)]{F_S}$
It follows from Proposition~\ref{prop:VLambda} and part \ref{prop:fi-VJ:VIstar-neg} that $n \in S$.
\end{proof}

\begin{proof}[Proof of Theorem~\ref{thm:uncountable}]
It suffices to prove the statement for $(C_1,C_2) = (K_1,K_2)$, where
$(K_1,K_2)$ is one of the pairs
$(\clOmegaOne,\clOmegaOne)$,
$(\clOmegaOne,\clLambda)$,
$(\clIstar,\clUinf)$,
$(\clIo,\clUinf)$,
$(\clIi,\clUinf)$,
$(\clL,\clOmegaOne)$,
$(\clV,\clLambda)$,
$(\clV,\clOmegaOne)$.
By Lemma~\ref{lem:clonmon}, it holds that for all clones $C_1$ and $C_2$ such that $C_1 \subseteq K_1$ and $C_2 \subseteq K_2$, there are an uncountable infinitude of $(C_1,C_2)$\hyp{}clonoids.
For the remaining pairs $(K_1,K_2)$ listed in the statement of the theorem, the result follows from Proposition~\ref{prop:Knid}.

By Propositions
\ref{prop:Omega1Omega1},
\ref{prop:Omega1Lambda},
\ref{prop:I0Uinf},
\ref{prop:IstarUinf},
\ref{prop:LOmega1},
\ref{prop:VLambda},
\ref{prop:fi-VJ},
there exists a countably infinite set $F$ of functions
with the property that for all subsets $S \subseteq F$ and for all $f \in F$,
we have $f \in \gen[(K_1,K_2)]{S}$ if and only if $f \in S$.
Consequently, for all $S, T \subseteq F$,
we have $\gen[(K_1,K_2)]{S} = \gen[(K_1,K_2)]{T}$ if and only if $S = T$.
Because the power set of $F$ is uncountable, it follows that there are an uncountable infinitude of $(K_1,K_2)$\hyp{}clonoids.
\end{proof}


\section{Clonoids with source and target clones of $0$- or $1$-separating functions}

In this section, we will show that there are an uncountable infinitude of $(C_1,C_2)$\hyp{}clonoids when $C_1$ and $C_2$ are subclones of clones of $0$- or $1$\hyp{}separating functions of rank $2$ and $\infty$, respectively.
We refer here to the definitions and results presented in an earlier paper by Ne\v{s}et\v{r}il and the present author \cite{LehNes-clique}.
In this earlier paper, the main notion of study was the so\hyp{}called $C$\hyp{}minor relation on $\mathcal{F}_{AB}$, where $C$ is a clone on $A$.
We will first briefly explain the connection between $C$\hyp{}minors and $(C_1,C_2)$\hyp{}clonoids in order to translate the earlier results into our current setting.

\begin{definition}
Let $f, g \in \mathcal{F}_{AB}$, and let $C$ be a clone on $A$.
We say that $f$ is a \emph{$C$\hyp{}minor} of $g$, and we write $f \leq_C g$, if $f \in \{g\} C$.
This condition is equivalent to $f \in \gen[(C,\clProj{B})]{g}$.
\end{definition}

For a fixed clone $C$ on $A$, the $C$\hyp{}minor relation $\leq_C$ is a quasiorder (a reflexive and transitive relation) on $\mathcal{F}_{AB}$,
and it induces an equivalence relation $\equiv_C$ on $\mathcal{F}_{AB}$ ($f \equiv_C g$ if and only if $f \leq_C g$ and $g \leq_C f$)
and a partial order, also denoted by $\leq_C$, on $\mathcal{F}_{AB} / \mathord{\equiv_C}$ ($f / \mathord{\equiv_C} \leq_C g / \mathord{\equiv_C}$ if and only if $f \leq_C g$)
in the usual way.

\begin{lemma}[{\cite[Lemma~4.2]{Lehtonen-discmono}}]
\label{lem:Cminor}
Let $C$ be a clone on $A$, and let $F \subseteq \mathcal{F}_{AB}$.
The following statements are equivalent.
\begin{enumerate}[label={\textup{(\roman*)}}]
\item $F C = F$.
\item $F$ is a downset of the $C$\hyp{}minor quasiorder $(\mathcal{F}_{AB}, \mathord{\leq_C})$.
\item $F$ is a $(C,\clProj{B})$\hyp{}clonoid.
\item $F = \bigcup D$ for some downset $D$ of the $C$\hyp{}minor partial order $(\mathcal{F}_{AB} / \mathord{\equiv_C}, \mathord{\leq_C})$.
\end{enumerate}
\end{lemma}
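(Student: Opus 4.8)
The plan is to prove all four statements equivalent by first collapsing the clonoid condition (iii) to a single containment, and then reading the minor quasiorder and its quotient partial order directly off the class-composition operation. Throughout, the two facts that do the real work are the presence of projections in $\clProj{B}$ and in $C$.

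First I would settle (i) $\Leftrightarrow$ (iii). Since every function in $\clProj{B}$ is a projection, for $p = \pr_i^{(n)} \in \clProj{B}$ and $g_1, \dots, g_n \in F$ of a common arity we have $p(g_1, \dots, g_n) = g_i \in F$; hence $\clProj{B} F \subseteq F$, and in fact $\clProj{B} F = F$, for \emph{every} $F \subseteq \mathcal{F}_{AB}$. Thus the second defining condition of a $(C,\clProj{B})$\hyp{}clonoid is automatic, and $F$ is a $(C,\clProj{B})$\hyp{}clonoid if and only if $FC \subseteq F$. On the other hand, because $C$ contains all projections, substituting them as inner functions gives $f = f(\pr_1^{(n)}, \dots, \pr_n^{(n)}) \in FC$ for every $f \in F^{(n)}$, so $F \subseteq FC$ always holds. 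Consequently $FC \subseteq F$, $FC = F$, and ``$F$ is a $(C,\clProj{B})$\hyp{}clonoid'' are all equivalent, which proves (i) $\Leftrightarrow$ (iii).

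Next I would treat (i) $\Leftrightarrow$ (ii). The key observation is that $FC = \bigcup_{g \in F} \{g\} C$, together with the definition $f \leq_C g \Leftrightarrow f \in \{g\} C$. By definition, $F$ is a downset of $(\mathcal{F}_{AB}, \leq_C)$ precisely when $\{g\} C \subseteq F$ for every $g \in F$, which via the displayed union is exactly $FC \subseteq F$; since $F \subseteq FC$ always holds, this is equivalent to $FC = F$. Finally, (ii) $\Leftrightarrow$ (iv) is the standard correspondence between downsets of a quasiorder and downsets of its induced partial order. If $F$ is a downset of $\leq_C$, then $F$ is saturated under $\equiv_C$ (any $g \equiv_C f \in F$ satisfies $g \leq_C f$, hence $g \in F$), the set $D$ of $\equiv_C$\hyp{}classes meeting $F$ is a downset of $(\mathcal{F}_{AB}/\mathord{\equiv_C}, \leq_C)$, and $F = \bigcup D$. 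Conversely, if $F = \bigcup D$ for a downset $D$ of the quotient order and $f \in F$, $g \leq_C f$, then $f/\mathord{\equiv_C} \in D$ and $g/\mathord{\equiv_C} \leq_C f/\mathord{\equiv_C}$, so $g/\mathord{\equiv_C} \in D$ and $g \in F$; thus $F$ is a downset.

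None of these steps is a genuine obstacle; the lemma is essentially a bookkeeping statement once the definitions are unwound. The only points requiring care are the two ``automatic'' inclusions $\clProj{B} F = F$ and $F \subseteq FC$, both of which rest on projections being available, and the verification that a downset of the quasiorder is automatically a union of $\equiv_C$\hyp{}classes, which is what legitimizes passing to the quotient poset in (iv).
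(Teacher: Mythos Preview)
Your proof is correct. Note, however, that the paper does not itself prove this lemma: it is quoted from \cite[Lemma~4.2]{Lehtonen-discmono} and stated without proof. Your argument---reducing (iii) to the single containment $FC \subseteq F$ via the automatic inclusions $\clProj{B} F = F$ and $F \subseteq FC$, then reading the downset condition as $\{g\}C \subseteq F$ for all $g \in F$, and finally invoking the standard quasiorder/quotient-poset correspondence---is exactly the routine verification one expects for a statement of this kind, and there are no gaps.
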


For $k \in \IN \cup \{\infty\}$, the \emph{disjointness hypergraph of rank $k$} of a Boolean function $f$ is the hypergraph $G(f,k)$ whose set of vertices is $f^{-1}(1)$ and a set $S \subseteq f^{-1}(1)$ is a hyperedge if and only if $2 \leq \card{S} \leq k$ and $\bigwedge S = \vect{0}$.
In particular, $G(f,2)$ and $G(f,\infty)$ are called the \emph{disjointness graph} and the \emph{disjointness hypergraph} of $f$, respectively.

\begin{proposition}[{\cite[Proposition~8]{LehNes-clique}}]
\label{prop:Uk-hom}
Let $f, g \in \clOX$ and let $k \in \{2, 3, \dots, \infty\}$.
Then
$f \minor_{\clUk{k}} g$, i.e.,
$f \in \gen[(\clUk{k},\clIc)]{g}$,
if and only if $G(f,k) \to G(g,k)$.
\end{proposition}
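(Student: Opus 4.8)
The statement is a biconditional, so the plan is to treat the two implications separately, in both cases passing between a substitution witnessing $f \in \{g\}\clUk{k}$ and a vertex map witnessing $G(f,k) \to G(g,k)$. Throughout I would write $g$ as an $n$-ary function and $f$ as an $m$-ary function, and keep in mind that $f,g \in \clOX$ means $f(\vect{0}) = g(\vect{0}) = 0$, while $h \in \clUk{k}$ means $\bigwedge T \neq \vect{0}$ for every $T \subseteq h^{-1}(1)$ with $\card{T} \leq k$; equivalently, in contrapositive form, whenever $\card{T} \leq k$ and $\bigwedge T = \vect{0}$, some member of $T$ lies in $h^{-1}(0)$. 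This contrapositive is the form actually used in the forward direction.

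For the forward implication, suppose $f = g(h_1, \dots, h_n)$ with $h_1, \dots, h_n \in \clUk{k}$, and set $\mathbf{h}(\vect{a}) := (h_1(\vect{a}), \dots, h_n(\vect{a}))$. First I would observe that $\mathbf{h}$ restricts to a map $f^{-1}(1) \to g^{-1}(1)$, since $g(\mathbf{h}(\vect{a})) = f(\vect{a})$; this restriction is the candidate vertex map $\phi$. To check that $\phi$ is a hypergraph homomorphism, take a hyperedge $S$ of $G(f,k)$, so $2 \leq \card{S} \leq k$ and $\bigwedge S = \vect{0}$. For each coordinate $j \in \nset{n}$, the contrapositive separation property of $h_j$ applied to $S$ yields some $\vect{a} \in S$ with $h_j(\vect{a}) = 0$; hence $\bigwedge_{\vect{a} \in S} h_j(\vect{a}) = 0$ for every $j$, i.e.\ $\bigwedge \phi(S) = \vect{0}$. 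Finally $\phi(S)$ has between $2$ and $k$ elements: at most $\card{S} \leq k$ since it is an image, and at least two, because if all images coincided with a single $\vect{b} \in g^{-1}(1)$ we would get $\vect{b} = \bigwedge \phi(S) = \vect{0}$, contradicting $g(\vect{0}) = 0$. Thus $\phi(S)$ is a hyperedge of $G(g,k)$.

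For the converse, given a homomorphism $\phi \colon G(f,k) \to G(g,k)$, I would build the substitution by extending $\phi$ to all of $\{0,1\}^m$: set $\mathbf{h}(\vect{a}) := \phi(\vect{a})$ for $\vect{a} \in f^{-1}(1)$ and $\mathbf{h}(\vect{a}) := \vect{0}$ for $\vect{a} \in f^{-1}(0)$, and let $h_j$ be the $j$-th component of $\mathbf{h}$. Then $g(\mathbf{h}(\vect{a})) = g(\phi(\vect{a})) = 1 = f(\vect{a})$ on true points (as $\phi(\vect{a}) \in g^{-1}(1)$) and $g(\mathbf{h}(\vect{a})) = g(\vect{0}) = 0 = f(\vect{a})$ on false points, so $f = g(h_1, \dots, h_n)$. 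It then remains to verify each $h_j \in \clUk{k}$. Note $h_j^{-1}(1) \subseteq f^{-1}(1)$, because false points map to $\vect{0}$. Suppose $T \subseteq h_j^{-1}(1)$ with $\card{T} \leq k$ and $\bigwedge T = \vect{0}$, aiming for a contradiction. The cases $\card{T} \leq 1$ cannot satisfy $\bigwedge T = \vect{0}$: the empty meet is $\vect{1}$, and a singleton $\{\vect{a}\}$ with $\vect{a} = \vect{0}$ would force $\vect{0} \in f^{-1}(1)$, against $f(\vect{0}) = 0$. Hence $\card{T} \geq 2$ and $T$ is a hyperedge of $G(f,k)$, so $\phi(T)$ is a hyperedge of $G(g,k)$ and $\bigwedge \phi(T) = \vect{0}$. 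But every $\vect{a} \in T$ lies in $h_j^{-1}(1)$, i.e.\ the $j$-th coordinate of $\phi(\vect{a}) = \mathbf{h}(\vect{a})$ is $1$, so the $j$-th coordinate of $\bigwedge \phi(T)$ is $1$---a contradiction. Thus no such $T$ exists and $h_j \in \clUk{k}$.

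The main obstacle I anticipate is the converse, specifically keeping the rank parameter $k$ synchronized on the two sides: the separation bound $\card{T} \leq k$ for the inner functions must line up exactly with the hyperedge size bound $\card{S} \leq k$ in the disjointness hypergraphs, which is precisely what lets the single argument cover all $k \in \{2, 3, \dots, \infty\}$ uniformly (with the empty meet read as $\vect{1}$ so that singleton and empty $T$ cause no trouble). A secondary point to pin down is the precise convention for hypergraph homomorphisms; the proof only uses that the image of an edge is again an edge (so that its meet is $\vect{0}$), and the non-collapsing of edge images needed in the forward direction is forced by $g \in \clOX$, so the argument should be robust to the exact formulation.
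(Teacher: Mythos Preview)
Your argument is correct in both directions. Note, however, that the paper does not give its own proof of this proposition: it is quoted verbatim from \cite[Proposition~8]{LehNes-clique} and used as a black box in the proof of Proposition~\ref{prop:Uk}. So there is no in-paper proof to compare against; your write-up is essentially the standard proof one finds for this characterization, and it matches the way the result is applied here (only the fact that a homomorphism sends hyperedges to hyperedges with zero meet is used downstream).

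One small remark on presentation: in the forward direction you invoke ``the contrapositive separation property of $h_j$ applied to $S$''. Strictly speaking, the defining condition of $\clUk{k}$ quantifies over subsets $T \subseteq h_j^{-1}(1)$, whereas your $S$ is a subset of $f^{-1}(1)$, not of $h_j^{-1}(1)$. The inference is still valid---if all members of $S$ were in $h_j^{-1}(1)$ then $S$ would violate $1$-separation of rank $k$---but you may want to say this explicitly rather than call it a contrapositive. Everything else (the non-collapsing argument using $g(\vect{0})=0$, the handling of $\card{T}\leq 1$ in the converse via the empty meet and $f(\vect{0})=0$) is clean and exactly what is needed.
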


\begin{proposition}
\label{prop:Uk}
For any $K \subseteq \clOX$, $\gen[(\clUk{2},\clUk{\infty})]{K} = \gen[(\clUk{2},\clIc)]{K}$.
\end{proposition}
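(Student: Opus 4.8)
The plan is to prove the two inclusions separately. Write $L := \gen[(\clUk{2},\clIc)]{K} = \clIc(K\clUk{2}) = K\clUk{2}$; by Lemma~\ref{lem:Cminor} this set is a downset of the $\clUk{2}$-minor quasiorder $(\clAll, \minor_{\clUk{2}})$. The inclusion $L \subseteq \gen[(\clUk{2},\clUinf)]{K}$ is the easy one: since $\clIc \subseteq \clUinf$, Lemma~\ref{lem:clonmon} shows that the $(\clUk{2},\clUinf)$\hyp{}clonoid $\gen[(\clUk{2},\clUinf)]{K}$ is also a $(\clUk{2},\clIc)$\hyp{}clonoid, and as it contains $K$ it must contain $L$. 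The real work is the reverse inclusion $\gen[(\clUk{2},\clUinf)]{K} = \clUinf(K\clUk{2}) \subseteq L$, i.e.\ showing that $L$ is already closed under left composition with $\clUinf$.

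For that, I would take an arbitrary $h \in \clUinf(K\clUk{2})$ and write $h = \mu(g_1, \dots, g_\ell)$ with $\mu \in \clUinf$ and $g_1, \dots, g_\ell \in L$. First I exploit the rank\hyp{}$\infty$ separating property of the outer function: because $\mu \in \clUinf$, we have $\bigwedge \mu^{-1}(1) \neq \vect{0}$, so some coordinate $p \in \nset{\ell}$ satisfies $b_p = 1$ for every $\vect{b} \in \mu^{-1}(1)$. Hence $h(\vect{a}) = 1$ forces $g_p(\vect{a}) = 1$, that is, $h \minorant g_p$ with $g_p \in L$. (The degenerate case $\mu^{-1}(1) = \emptyset$, where $h$ is the constant $\vak{0}$, is covered by the same conclusion since then $h^{-1}(1) = \emptyset$.) Since $g_p \in K\clUk{2} \subseteq \clOX$ by Lemma~\ref{lem:a-pres} and $h \minorant g_p$, we also get $h(\vect{0}) \leq g_p(\vect{0}) = 0$, so $h \in \clOX$.

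The crux is to upgrade the minorant relation $h \minorant g_p$ to the $\clUk{2}$\hyp{}minor relation $h \minor_{\clUk{2}} g_p$. Here I invoke Proposition~\ref{prop:Uk-hom}: for the $0$\hyp{}preserving functions $h$ and $g_p$, we have $h \minor_{\clUk{2}} g_p$ if and only if there is a homomorphism $G(h,2) \to G(g_p,2)$ of disjointness graphs. But $h \minorant g_p$ means exactly $h^{-1}(1) \subseteq g_p^{-1}(1)$, and I claim the identity inclusion of vertex sets is such a homomorphism: any edge $\{\vect{u}, \vect{v}\}$ of $G(h,2)$ is a pair of true points of $h$ with $\vect{u} \wedge \vect{v} = \vect{0}$, hence a pair of true points of $g_p$ with meet $\vect{0}$, i.e.\ an edge of $G(g_p,2)$. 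Thus $h \minor_{\clUk{2}} g_p$, and since $L$ is a downset of $\minor_{\clUk{2}}$ containing $g_p$, we conclude $h \in L$, which finishes the reverse inclusion.

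The one place where the ranks $2$ and $\infty$ genuinely interact---and the point I expect to require the most care---is the passage from the outer function to the target: the rank\hyp{}$\infty$ hypothesis on $\mu$ is precisely what produces a single dominating coordinate and hence the minorant relation, while the rank\hyp{}$2$ characterization of $\minor_{\clUk{2}}$ via disjointness graphs is precisely what makes a bare inclusion of true\hyp{}point sets suffice, since disjoint pairs remain disjoint pairs. Beyond that, the only routine checks are that the identity inclusion really is a legitimate hypergraph homomorphism and that both $h$ and $g_p$ lie in $\clOX$, so that Proposition~\ref{prop:Uk-hom} applies.
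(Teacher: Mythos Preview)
Your proof is correct and follows essentially the same approach as the paper: extract a dominating coordinate from $\mu \in \clUinf$ to obtain a minorant relation, then use that a minorant induces an inclusion of disjointness graphs and apply Proposition~\ref{prop:Uk-hom}. Your packaging is slightly more economical---you invoke the downset property of $L$ once, whereas the paper unpacks $g_p$ as a $\clUk{2}$-minor of some $g \in K$ and composes two graph homomorphisms explicitly---and you are also more careful to verify $h \in \clOX$ before applying Proposition~\ref{prop:Uk-hom}, a check the paper leaves implicit.
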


\begin{proof}
The inclusion $\gen[(\clUk{2},\clIc)]{K} \subseteq \gen[(\clUk{2},\clUk{\infty})]{K}$ is clear.

In order to prove the converse inclusion, let $f \in \gen[(\clUk{2},\clUk{\infty})]{K} = \clUk{\infty} ( K \clUk{2} )$.
Then $f = \eta ( \varphi_1, \dots, \varphi_\ell )$ for some $\eta \in \clUk{\infty}$, $\varphi_1, \dots, \varphi_\ell \in K \clUk{2}$.
Because $\varphi = (\varphi_1, \dots, \varphi_\ell)$ maps $f^{-1}(1)$ into $\eta^{-1}(1)$ and because $\eta \in \clUk{\infty}$, there exists a $j \in \nset{\ell}$ such that $\varphi_j(\vect{a}) = 1$ for all $\vect{a} \in f^{-1}(1)$.
In other words, $f \minorant \varphi_j$.

Now, $\varphi_j = g ( h_1, \dots, h_m )$ for some $g \in K$, $h_1, \dots, h_m \in \clUk{k}$, i.e.,
$\varphi_j \in \gen[(\clUk{k},\clIc)]{g}$.
By Proposition~\ref{prop:Uk-hom}, $G(\varphi_j,k) \to G(g,k)$.
Because $f$ is a minorant of $\varphi_j$, it follows that $G(f,k)$ is an induced subgraph of $G(\varphi_j,k)$, and therefore the inclusion map $G(f,k) \to G(\varphi_j,k)$ is a homomorphism.
Consequently, $G(f,k) \to G(g,k)$, which implies, by Proposition~\ref{prop:Uk-hom}, that
$f \in \gen[(\clUk{k},\clIc)]{g} \subseteq \gen[(\clUk{k},\clIc)]{K}$.
\end{proof}

\begin{theorem}
\label{thm:U2Uinf}
For all clones $C_1$ and $C_2$ such that $C_1 \subseteq K_1$ and $C_2 \subseteq K_2$ for some $K_1 \in \{\clUk{2}, \clWk{2}\}$ and $K_2 \in \{\clUk{\infty}, \clWk{\infty}\}$,
there are an uncountably infinitude of $(C_1,C_2)$\hyp{}clonoids.
\end{theorem}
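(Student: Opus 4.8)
The plan is to collapse the whole statement to a single clone pair and then count clonoids there through the homomorphism order of disjointness graphs. First, by Lemma~\ref{lem:clonmon}, whenever $C_1 \subseteq K_1$ and $C_2 \subseteq K_2$ every $(K_1,K_2)$\hyp{}clonoid is also a $(C_1,C_2)$\hyp{}clonoid, so $\card{\closys{(C_1,C_2)}} \geq \card{\closys{(K_1,K_2)}}$; hence it suffices to prove that $\closys{(K_1,K_2)}$ is uncountable for each of the four pairs $(K_1,K_2)$ with $K_1 \in \{\clUk{2},\clWk{2}\}$ and $K_2 \in \{\clUk{\infty},\clWk{\infty}\}$. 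Next I would note that, because $(\clUk{k})^\mathrm{d} = \clWk{k}$ for every rank $k$, these four pairs are exactly the pairs obtained from $(\clUk{2},\clUk{\infty})$ by dualizing the source and/or the target. Therefore, by Corollary~\ref{cor:duals}, the four lattices $\closys{(\clUk{2},\clUk{\infty})}$, $\closys{(\clUk{2},\clWk{\infty})}$, $\closys{(\clWk{2},\clUk{\infty})}$, $\closys{(\clWk{2},\clWk{\infty})}$ all have the same cardinality, and the theorem reduces to showing that $\closys{(\clUk{2},\clUk{\infty})}$ is uncountable.

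For that one pair I would work inside $\clOX$. By Proposition~\ref{prop:Uk}, for every $K \subseteq \clOX$ we have $\gen[(\clUk{2},\clUk{\infty})]{K} = \gen[(\clUk{2},\clIc)]{K}$, and since $\clUk{2} \subseteq \clOX$ this generated class is again contained in $\clOX$ and, by Lemma~\ref{lem:Cminor}, is precisely the downset of $K$ in the $\clUk{2}$\hyp{}minor quasiorder. Thus the $(\clUk{2},\clUk{\infty})$\hyp{}clonoids generated by subsets of $\clOX$ are exactly the downsets generated, in the $\clUk{2}$\hyp{}minor quasiorder, by families of functions from $\clOX$; and by Proposition~\ref{prop:Uk-hom} this quasiorder coincides on $\clOX$ with the homomorphism quasiorder of the disjointness graphs $G(f,2)$.

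The decisive input is then the universality of the homomorphism order of disjointness (hyper)graphs furnished by \cite{LehNes-clique}: the homomorphism quasiorder of the graphs $G(f,2)$ with $f \in \clOX$ is universal and in particular contains an infinite antichain. Concretely, I would take a countably infinite family of finite loopless graphs forming an antichain in the graph homomorphism order, realize each as $G(f_i,2)$ for some $f_i \in \clOX$, and invoke Proposition~\ref{prop:Uk-hom} to conclude that $\{ f_i : i \in \IN \} \subseteq \clOX$ is an antichain in the $\clUk{2}$\hyp{}minor quasiorder. To finish, for $S \subseteq \IN$ put $K_S := \gen[(\clUk{2},\clUk{\infty})]{\{ f_i : i \in S \}}$; if $S \neq T$, choose $i \in S \setminus T$, so that $f_i \in K_S$, whereas $f_i \notin K_T$ because (again by Proposition~\ref{prop:Uk}) $K_T$ equals the $\clUk{2}$\hyp{}minor downset of $\{ f_j : j \in T \}$ and $f_i \not\leq_{\clUk{2}} f_j$ for all $j \in T$ by the antichain property. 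Hence $S \mapsto K_S$ is injective, and since $\IN$ has uncountably many subsets, $\closys{(\clUk{2},\clUk{\infty})}$ is uncountable.

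I expect the main obstacle to be the antichain/universality step: identifying the precise realizability statement in \cite{LehNes-clique} that guarantees an infinite antichain of disjointness graphs already at rank~$2$ (rather than only at rank~$\infty$), and verifying that the functions realizing it lie in $\clOX$ so that Propositions~\ref{prop:Uk} and~\ref{prop:Uk-hom} apply verbatim. Everything else is bookkeeping with the duality and monotonicity results already established.
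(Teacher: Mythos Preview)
Your proposal is correct and follows essentially the same route as the paper: reduce to $(\clUk{2},\clUk{\infty})$ via monotonicity and duality, then use Proposition~\ref{prop:Uk} to identify $(\clUk{2},\clUk{\infty})$-clonoids generated inside $\clOX$ with $\clUk{2}$-minor downsets, and finally invoke the universality result of \cite{LehNes-clique} to obtain an infinite antichain and hence uncountably many downsets. Regarding your stated obstacle, the paper confirms that \cite{LehNes-clique} establishes universality of the $\clUk{k}$-minor order for every rank $k$ (in particular $k=2$), even when restricted to $\clOX / \mathord{\equiv_{\clUk{k}}}$ (indeed, to $\clMc / \mathord{\equiv_{\clUk{k}}}$), so the realizing functions do lie in $\clOX$ and your application of Propositions~\ref{prop:Uk} and~\ref{prop:Uk-hom} goes through verbatim.
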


\begin{proof}
It suffices to prove the statement for $C_1 = \clUk{2}$ and $C_2 = \clUk{\infty}$.
It holds for the remaining pairs $(C_1,C_2)$ of clones by Proposition \ref{prop:Knid} and Lemma~\ref{lem:clonmon}.

By the results of \cite{LehNes-clique}, the $\clUk{k}$\hyp{}minor partial order $(\mathcal{F}_{AB} / \mathord{\equiv_{\clUk{k}}}, \mathord{\leq_{\clUk{k}}})$ is universal in the class of countable posets, even when restricted to $\clOX / \mathord{\equiv_{\clUk{k}}}$ (or even to $\clMc / \mathord{\equiv_{\clUk{k}}}$).
Thus, there exists a countably infinite antichain $Q$ in $\mathbf{P} = (\clOX / \mathord{\equiv_{\clUk{k}}}, \mathord{\minor_{\clUk{k}}})$.
Distinct subsets of $Q$ generate distinct downsets of $\mathbf{P}$.
Because the power set of $Q$ is uncountable, it follows that there are an uncountable infinitude of downsets of $\mathbf{P}$.

Because $\clUk{k} \subseteq \clOX$, it follows from
Lemmata~\ref{lem:a-pres} and \ref{lem:Cminor} that
the downsets of $\mathbf{P}$ are also downsets of $(\mathcal{F}_{AB} / \mathord{\equiv_{\clUk{k}}}, \mathord{\leq_{\clUk{k}}})$.
Moreover, again by Lemma~\ref{lem:Cminor}, the downsets of the $\clUk{k}$\hyp{}minor poset $(\mathcal{F}_{AB} / \mathord{\equiv_{\clUk{k}}}, \mathord{\leq_{\clUk{k}}})$ are essentially the same thing as $(\clUk{k},\clIc)$\hyp{}clonoids.
Consequently, there are an uncountably infinitude of $(\clUk{k},\clIc)$\hyp{}clonoids that are subsets of $\clOX$.
By Proposition~\ref{prop:Uk}, $\gen[(\clUk{k},\clIc)]{K} = \gen[(\clUk{k},\clUk{\infty})]{K}$ for all $K \subseteq \clOX$, and it follows that there are an uncountable infinitude of $(\clUk{k},\clUk{\infty})$\hyp{}clonoids as well.
This holds in particular when $k = 2$.
\end{proof}


\section{Finite clonoid lattices}

In this section, we show that for certain pairs $(C_1,C_2)$ of clones, there are only a finite number of $(C_1,C_2)$\hyp{}clonoids, and we explicitly describe such $(C_1,C_2)$\hyp{}clonoids.

\begin{theorem}
\label{thm:finite}
For all clones $C_1$ and $C_2$ such that $C_1 \supseteq K_1$ and $C_2 \supseteq K_2$ for some
\[
K_1 \in \{\clI, \clVo, \clLambdai\},
\quad
K_2 \in \{\clMcUk{\infty}, \clMcWk{\infty} \},
\]
there are only finitely many $(C_1,C_2)$\hyp{}clonoids.
\end{theorem}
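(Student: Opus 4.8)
The plan is to peel off all but finitely many cases using the general lemmas of Section~\ref{sec:helpful}, and then, in each surviving case, to prove that the induced principal quasiorder on $\clAll$ has only finitely many equivalence classes.

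First I would reduce the number of pairs. By Lemma~\ref{lem:clonmon}, if $C_1 \supseteq K_1$ and $C_2 \supseteq K_2$, then every $(C_1,C_2)$\hyp{}clonoid is a $(K_1,K_2)$\hyp{}clonoid, so $\closys{(C_1,C_2)} \subseteq \closys{(K_1,K_2)}$; hence it suffices to show that $\closys{(K_1,K_2)}$ is finite for the six pairs with $K_1 \in \{\clI,\clVo,\clLambdai\}$ and $K_2 \in \{\clMcUk{\infty},\clMcWk{\infty}\}$. These can be cut down with Corollary~\ref{cor:duals}: since $\vee$ and $\wedge$ and the constants $0$ and $1$ are interchanged by dualization, while monotonicity and $0,1$\hyp{}preservation are self\hyp{}dual and $1$- and $0$\hyp{}separation are mutually dual, we have $\clI^{\mathrm d}=\clI$, $\clVo^{\mathrm d}=\clLambdai$, and $(\clMcUk{\infty})^{\mathrm d}=\clMcWk{\infty}$. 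Thus the six pairs form three dual classes, and it is enough to treat the representatives $(\clI,\clMcUk{\infty})$, $(\clVo,\clMcUk{\infty})$, and $(\clVo,\clMcWk{\infty})$.

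Fixing such a pair $(K_1,K_2)$, I would define the quasiorder $f \preceq g$ on $\clAll$ by $f \preceq g \iff f \in \gen[(K_1,K_2)]{g}$, with associated equivalence $\equiv$ and quotient poset $\clAll/\mathord{\equiv}$. The crucial and easy reduction is that every $(K_1,K_2)$\hyp{}clonoid $K$ is a downset of $\preceq$: if $g \in K$ and $f \preceq g$, then $\gen[(K_1,K_2)]{g} \subseteq K$, so $f \in K$. Consequently the number of $(K_1,K_2)$\hyp{}clonoids is bounded by the number of downsets of $\clAll/\mathord{\equiv}$, and it remains only to prove that $\clAll/\mathord{\equiv}$ is \emph{finite} (a poset has finitely many downsets exactly when it is finite, since distinct elements have distinct principal downsets). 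Note that I need no converse — that every downset be a clonoid — only this one\hyp{}sided bound. To bound the classes I would use the identity $\gen[(K_1,K_2)]{F}=K_2(FK_1)$ together with the structure of the target. Every $\eta \in \clMcUk{\infty}$ is monotone, preserves $0$ and $1$, and has a common true coordinate $j$ (so $\eta(\vect{a})=1$ forces $a_j=1$); hence left\hyp{}composition with $\clMcUk{\infty}$ yields a minorant of one inner argument, and dually $\clMcWk{\infty}$ yields a majorant. The constant functions give finitely many classes (e.g.\ $\gen[(\clI,\clMcUk{\infty})]{\vak{0}}=\{\vak{0}\}$ and $\gen[(\clI,\clMcUk{\infty})]{\vak{1}}=\{\vak{1}\}$), and the nonconstant monotone functions collapse to a single class: fixing all but one argument to suitable constants drawn from $K_1$ exposes a projection, so $\id \in \gen[(\clI,\clMcUk{\infty})]{f}$ for every nonconstant monotone $f$, while conversely the single outer function $x_0 \wedge h \in \clMcUk{\infty}$, fed the constant $1$ (or, in the $\clVo$ case, the full disjunction of its remaining arguments), rebuilds an arbitrary monotone $h$; this gives $\gen[(\clI,\clMcUk{\infty})]{\id}=\clM$ (and $\clMo$ in the $(\clVo,\clMcUk{\infty})$ case).

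The main obstacle is the non\hyp{}monotone part: I must show that the non\hyp{}monotone functions also split into only finitely many $\equiv$\hyp{}classes, i.e.\ that no infinite antichain or chain of them survives the closure. Here I would exploit that left\hyp{}composition is permitted only with monotone, $0,1$\hyp{}preserving functions — so negations can be neither freely created nor cancelled — together with the complement and inner\hyp{}negation symmetries recorded in Proposition~\ref{prop:Knid} and Lemma~\ref{lem:IJ-InJi}, the arity reduction provided by $K_1$\hyp{}substitutions, and the minorant/majorant property above, to argue that every function is $\equiv$\hyp{}equivalent to one from a fixed finite list of low\hyp{}arity functions (the two constants, the identity and negation, $\wedge$ and $\vee$, and boundedly many further small functions). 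Establishing this bound, and checking that the resulting list of classes is exhaustive, is the technical heart of the argument; once $\clAll/\mathord{\equiv}$ is shown finite, finiteness of $\closys{(K_1,K_2)}$ is immediate, and reading off which downsets are actually clonoids yields the explicit enumeration.
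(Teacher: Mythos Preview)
Your reduction and overall framework match the paper's: cut down by Lemma~\ref{lem:clonmon}, dualize, then bound the number of $\equiv$\hyp{}classes (which the paper carries out as an explicit enumeration of the clonoids in Propositions~\ref{prop:IMcUinf} and~\ref{prop:VoMcUinf}). One sharpening: Corollary~\ref{cor:duals} gives four bijections, not only $K\mapsto K^{\mathrm d}$; using $K\mapsto\overline K$ (which fixes $C_1$ and dualizes $C_2$) already identifies $\closys{(\clVo,\clMcWk{\infty})}$ with $\closys{(\clVo,\clMcUk{\infty})}$, so only \emph{two} base pairs remain, not three.

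The genuine gap is the non\hyp{}monotone case, which you correctly flag as ``the technical heart'' but for which you do not supply the mechanism. The paper's key idea is not an arity reduction but an \emph{encoding}. From any $f\notin\clM$ one extracts a negated projection by fixing the other arguments to constants along a descending step of $f$, and symmetrically a projection from any $g\notin\clMneg$; this yields $\clOmegaOne\subseteq\{f,g\}\,\clI$. The decisive construction then shows $\clAll=\clMcUk{\infty}\,\clOmegaOne$: given $\varphi\in\clAll^{(n)}$, define $\varphi'\in\clAll^{(2n+2)}$ by setting $\varphi'(\vect a,\vect b,c,d)=\varphi(\vect a)$ on the ``diagonal'' $\vect b=\overline{\vect a}$, $c=1$, $d=0$, and padding monotonically elsewhere so that $\varphi'\in\clMcUk{\infty}$; then $\varphi=\varphi'(\pr_1,\dots,\pr_n,\overline{\pr_1},\dots,\overline{\pr_n},\vak1,\vak0)$. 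Analogous encodings handle $\clM$, $\clMneg$, and the $(\clVo,\clMcUk{\infty})$ case (where $\vak1\notin\clVo$ and the full disjunction $\mathord{\vee_m}$ is substituted instead). Without this, your minorant observation only gives $\gen{f}\subseteq\clAll$, never $\supseteq$, and there is no device to collapse the non\hyp{}monotone functions to finitely many classes. In particular, your proposed route of reducing every function to an $\equiv$\hyp{}equivalent low\hyp{}arity one runs against the grain of the actual argument, which \emph{raises} arity (from $n$ to $2n+2$) to manufacture a member of $\clMcUk{\infty}$ that recovers $\varphi$ once projections, negated projections, and constants are fed in.
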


\begin{proof}
It suffices to prove the statement for $(C_1,C_2) = (K_1,K_2)$, where
$(K_1,K_2)$ is one of the pairs
$(\clI,\clMcUk{\infty})$
and
$(\clVo,\clMcUk{\infty})$.
This is the content of Propositions \ref{prop:IMcUinf} and \ref{prop:VoMcUinf} below.
By Lemma~\ref{lem:clonmon}, it holds that for all clones $C_1$ and $C_2$ such that $C_1 \supseteq K_1$ and $C_2 \supseteq K_2$, there are only a finite number of $(C_1,C_2)$\hyp{}clonoids.
For the remaining pairs $(K_1,K_2)$ specified in the statement of the theorem, the result follows from Proposition~\ref{prop:Knid}.
\end{proof}

The remainder of this section is devoted to the proof of the finiteness of the lattices of $(\clI,\clMcUk{\infty})$- and $(\clVo,\clMcUk{\infty})$\hyp{}clonoids  (Propositions \ref{prop:IMcUinf} and \ref{prop:VoMcUinf}).
In fact, we will explicitly describe these clonoids.

\begin{proposition}
\label{prop:IMcUinf}
There are precisely 7 $(\clI,\clMcUk{\infty})$\hyp{}clonoids, and they are
$\clAll$, $\clM$, $\clMneg$, $\clVak$, $\clVako$, $\clVaki$, and $\clEmpty$.
These are also the $(\clI,\clMcWk{\infty})$\hyp{}clonoids.
\end{proposition}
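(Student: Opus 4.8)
The plan is to first check that the seven listed sets are $(\clI,\clMcUk{\infty})$\hyp{}clonoids, and then to show that every $(\clI,\clMcUk{\infty})$\hyp{}clonoid occurs among them. The closure verifications are routine, and the crucial points are: since $\clMcUk{\infty}\subseteq\clM$, left composition by $\clMcUk{\infty}$ sends monotone functions to monotone functions and antitone functions to antitone functions, giving $\clMcUk{\infty}\clM\subseteq\clM$ and $\clMcUk{\infty}\clMneg\subseteq\clMneg$; moreover every $\mu\in\clMcUk{\infty}$ satisfies $\mu(\vect{0})=0$ and $\mu(\vect{1})=1$, which is exactly what keeps $\clVako$, $\clVaki$, and $\clVak$ closed under left composition. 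Right composition by $\clI$ (projections and constants) trivially preserves all seven classes, and $\clEmpty$ is vacuously a clonoid.

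For the classification, the two workhorse facts will be that every nonconstant monotone function generates $\clM$, and every nonconstant antitone function generates $\clMneg$. For the first, a nonconstant monotone $f$ has $f(\vect{0})=0$ and $f(\vect{1})=1$, so along a maximal chain from $\vect{0}$ to $\vect{1}$ there is a covering pair on which $f$ jumps from $0$ to $1$; fixing the remaining coordinates to constants (right composition by $\clI$) yields $\id\in\gen[(\clI,\clMcUk{\infty})]{f}$. It then suffices to prove $\gen[(\clI,\clMcUk{\infty})]{\id}=\clMcUk{\infty}(\clI)=\clM$, using the generated\hyp{}clonoid formula: the inclusion $\subseteq\clM$ is clear, and for $\supseteq$ I write an arbitrary monotone $h$ in its monotone disjunctive normal form, realize each positive\hyp{}literal conjunction as $\wedge^{(n)}(\pr_{i_1}^{(n)},\dots)\in\clMcUk{\infty}(\clI)$, and assemble the disjunction as below. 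The antitone fact is the mirror image: such a $g$ yields $\neg\in\gen[(\clI,\clMcUk{\infty})]{g}$, and writing an antitone $h$ as $h(\vect{a})=h'(\overline{\vect{a}})$ with $h'\in\clM$ and substituting the representation of $h'$ just obtained (each projection becomes a negated projection $\neg_i^{(n)}\in\{\neg\}\clI$, each constant stays a constant, inner negation distributing over the outer $\clMcUk{\infty}$\hyp{}function) shows $\clMneg\subseteq\gen[(\clI,\clMcUk{\infty})]{\neg}$.

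I expect the main obstacle to be assembling disjunctions, since $\vee\notin\clMcUk{\infty}$ (it is not $1$\hyp{}separating), so disjunctions cannot be formed by naive left composition. The device that resolves this is the guard function $\mu_0(b_0,b_1,\dots,b_k):=b_0\wedge(b_1\vee\dots\vee b_k)$, which lies in $\clMcUk{\infty}$ because it is monotone, preserves $\vect{0}$ and $\vect{1}$, and all its true points have $b_0=1$ (so it is $1$\hyp{}separating); substituting $b_0:=\vak{1}$ turns it into a genuine disjunction. This gives the key generation lemma: if $\clM\cup\clMneg\subseteq K$, then $K=\clAll$. Indeed all literals $\pr_i^{(n)}\in\clM$ and $\neg_i^{(n)}\in\clMneg$ lie in $K$, hence every full minterm $m_{\vect{t}}=\wedge^{(n)}(\ell_1,\dots,\ell_n)$ lies in $\clMcUk{\infty}K\subseteq K$, and then for any $h$ with $h^{-1}(1)=\{\vect{t}_1,\dots,\vect{t}_k\}$ we get $h=\mu_0(\vak{1}^{(n)},m_{\vect{t}_1},\dots,m_{\vect{t}_k})\in\clMcUk{\infty}K\subseteq K$ (constants being present already).

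Finally I would assemble the classification for a nonempty $(\clI,\clMcUk{\infty})$\hyp{}clonoid $K$. If $K$ contains a function that is neither monotone nor antitone, then applying the covering\hyp{}pair extraction to a decreasing and to an increasing pair produces both $\neg$ and $\id$, hence $\clM\cup\clMneg\subseteq K$, so $K=\clAll$ by the key lemma; the same conclusion holds if $K$ contains a nonconstant monotone and a nonconstant antitone function simultaneously. Otherwise every function of $K$ is monotone or antitone and only one nonconstant type occurs: a nonconstant monotone function forces $\clM\subseteq K\subseteq\clM$, so $K=\clM$, and symmetrically $K=\clMneg$. If $K$ has no nonconstant function then $K\subseteq\clVak$, and since one constant of a given value forces all its arities (right composition by $\clI$) while $\clMcUk{\infty}$ preserves $\vect{0}$ and $\vect{1}$, the nonempty possibilities are exactly $\clVako$, $\clVaki$, $\clVak$. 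With $\clEmpty$ this yields precisely the seven classes. The statement for $\clMcWk{\infty}$ then follows from Proposition~\ref{prop:Knid}: since $\clI^{\mathrm{d}}=\clI$ and $(\clMcUk{\infty})^{\mathrm{d}}=\clMcWk{\infty}$, the map $K\mapsto K^{\mathrm{d}}$ is a bijection between the two clonoid lattices, and the seven classes are permuted among themselves by $(\cdot)^{\mathrm{d}}$, with $\clVako$ and $\clVaki$ interchanged and the others fixed.
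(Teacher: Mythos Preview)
Your proof is correct and follows essentially the same strategy as the paper: verify that the seven listed classes are closed, then show that any subset of $\clAll$ generates one of them by extracting $\id$ or $\neg$ from nonconstant monotone or antitone functions via a covering pair, and use duality (Proposition~\ref{prop:Knid}) for the $\clMcWk{\infty}$ statement. The only difference is in the technical device used to pass from $\clI$ (or $\overline{\clI}$, or $\clOmegaOne$) up to $\clM$ (or $\clMneg$, or $\clAll$): the paper builds, for each target $\varphi$, an ad hoc function $\varphi'\in\clMcUk{\infty}$ with one or two extra ``guard'' arguments set to constants, whereas you factor through a single uniform guard $\mu_0(b_0,b_1,\dots,b_k)=b_0\wedge(b_1\vee\cdots\vee b_k)\in\clMcUk{\infty}$ together with DNF/minterm decompositions; both encode the same idea that adjoining a guard coordinate forces $1$\hyp{}separation, and the two implementations are interchangeable.
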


\begin{proof}
\renewcommand{\gendefault}{(\clI,\clMcUk{\infty})}
We prove the claim about $(\clI,\clMcUk{\infty})$\hyp{}clonoids.
It turns out that dualization is a bijection on the set of all $(\clI,\clMcUk{\infty})$\hyp{}clonoids.
Consequently, by Proposition~\ref{prop:Knid}, these classes are also precisely the $(\clI,\clMcWk{\infty})$\hyp{}clonoids.

It is straightforward to verify that each one of the classes $\clAll$, $\clM$, $\clMneg$, $\clVak$, $\clVako$, $\clVaki$, and $\clEmpty$ is stable under right composition with $\clI$ and stable under left composition with $\clMcUk{\infty}$.
(For the left stability under $\clMcUk{\infty}$, it might be helpful to note that $\clMcUk{\infty}$ is a subclone of $\clM$, and hence $\clMcUk{\infty} \, \clM \subseteq \clM$ and $\clMcUk{\infty} \, \clMneg \subseteq \clMneg$.)

It remains to show that there are no further $(\clI,\clMcUk{\infty})$\hyp{}clonoids.
This is achieved by showing that every subset of $\clAll$ generates one of $\clAll$, $\clM$, $\clMneg$, $\clVak$, $\clVako$, $\clVaki$, and $\clEmpty$.
More precisely, for each one of these classes, say $K$, we show that for every subset $S$ of $K$ that is not included in any of the proper subclasses of $K$, as listed above, $\gen{S} = K$.
This will be done in the claims that follow.

\begin{claim}
\label{clm:IMcUinf:All}
If $f \in \clAll \setminus \clM$ and $g \in \clAll \setminus \clMneg$, then $\gen{f,g} = \clAll$.
\end{claim}

\begin{pfclaim}[Proof of Claim~\ref{clm:IMcUinf:All}]
We show first that $\clOmegaOne \subseteq \{f, g\} \clI$.
Because $f \in \clAll \setminus \clM$, there exist $\vect{a}$ and $\vect{b}$ such that $\vect{a} < \vect{b}$ and $f(\vect{a}) = 1 > 0 = f(\vect{b})$.
We see that the constant functions and the negated projections are in $\{ f \} \clI$ because
$\vak{1} = f(\vak{a_1}, \dots, \vak{a_n})$,
$\vak{0} = f(\vak{b_1}, \dots, \vak{b_n})$,
and
$\overline{\pr^{(m)}_i} = f(\alpha_1, \dots, \alpha_n)$, where
\[
\alpha_i =
\begin{cases}
\vak{0} & \text{if $a_i = b_i = 0$,} \\
\vak{1} & \text{if $a_i = b_i = 1$,} \\
\pr^{(m)}_i & \text{if $a_i = 1$ and $b_i = 0$,}
\end{cases}
\]
and $\vak{0}, \vak{1}, \pr^{(m)}_i \in \clI$.
In a similar way we get $\pr^{(m)}_j = g(\beta_1, \dots, \beta_n)$ for suitable $\beta_1, \dots, \beta_n \in \clI$.
Therefore $\clOmegaOne \subseteq \{f, g\} \clI$.

Now, let $\varphi \in \clAll$, say, of arity $n$, and define $\varphi' \colon \{0,1\}^{2n+2} \to \{0,1\}$,
\[
\varphi'(\vect{a}, \vect{b}, c, d) =
\begin{cases}
1 & \text{if $c = 1$ and ($d = 1$ or $w(\vect{a} \vect{b}) > n$),} \\
\varphi(\vect{a}) & \text{if $c = 1$, $d = 0$, $\vect{a} = \overline{\vect{b}}$,} \\
0 & \text{if $c = 0$ or ($c = 1$, $d = 0$, $w(\vect{a} \vect{b}) \leq n$, and $\vect{a} \neq \overline{\vect{b}}$).}
\end{cases}
\]
Clearly, $\varphi' \in \clMcUk{\infty}$ and $\varphi = \varphi'(\pr_1, \dots, \pr_n, \overline{\pr_1}, \dots, \overline{\pr_n}, \vak{1}, \vak{0}) \in \clMcUk{\infty} \, \clOmegaOne$.
Therefore, $\clAll \subseteq \clMcUk{\infty} \, \clOmegaOne \subseteq \clMcUk{\infty} (\{f, g\} \, \clI) = \gen{f,g} \subseteq \clAll$.
\end{pfclaim}

\begin{claim}
\label{clm:IMcUinf:M}
If $f \in \clM \setminus \clVak$, then $\gen{f} = \clM$.
\end{claim}

\begin{pfclaim}[Proof of Claim~\ref{clm:IMcUinf:M}]
Because $f \in \clM \setminus \clVak$, there exist $\vect{a}$ and $\vect{b}$ such that $\vect{a} < \vect{b}$ and $f(\vect{a}) = 0 < 1 = f(\vect{b})$.
In the same way as above, we get that $\clI \subseteq \{f\} \, \clI$.

Now, let $\varphi \in \clM$, say, of arity $n$, and define $\varphi' \colon \{0,1\}^{n+2} \to \{0,1\}$,
\[
\varphi'(\vect{a}, c, d) =
\begin{cases}
1 & \text{if $c = d = 1$,} \\
\varphi(\vect{a}) & \text{if $c = 1$, $d = 0$,} \\
0 & \text{if $c = 0$.}
\end{cases}
\]
Clearly, $\varphi' \in \clMcUk{\infty}$ and $\varphi = \varphi'(\pr_1, \dots, \pr_n, \vak{1}, \vak{0}) \in \clMcUk{\infty} \, \clI$.
Therefore, $\clM \subseteq \clMcUk{\infty} \, \clI \subseteq \clMcUk{\infty} (\{f\} \, \clI) = \gen{f} \subseteq \clM$.
\end{pfclaim}

\begin{claim}
\label{clm:IMcUinf:Mneg}
If $f \in \clMneg \setminus \clVak$, then $\gen{f} = \clMneg$.
\end{claim}

\begin{pfclaim}[Proof of Claim~\ref{clm:IMcUinf:Mneg}]
Because $f \in \clMneg \setminus \clVak$, there exist $\vect{a}$ and $\vect{b}$ such that $\vect{a} < \vect{b}$ and $f(\vect{a}) = 1 > 0 = f(\vect{b})$.
In the same way as above, we get that $\overline{\clI} \subseteq \{f\} \, \clI$.

Now, let $\varphi \in \clMneg$, say, of arity $n$, and define $\varphi' \colon \{0,1\}^{n+2} \to \{0,1\}$,
\[
\varphi'(\vect{a}, c, d) =
\begin{cases}
1 & \text{if $c = d = 1$,} \\
\varphi(\overline{\vect{a}}) & \text{if $c = 1$, $d = 0$,} \\
0 & \text{if $c = 0$.}
\end{cases}
\]
Clearly, $\varphi' \in \clMcUk{\infty}$ and $\varphi = \varphi'(\overline{\pr_1}, \dots, \overline{\pr_n}, \vak{1}, \vak{0}) \in \clMcUk{\infty} \, \overline{\clI}$.
Therefore, $\clMneg \subseteq \clMcUk{\infty} \, \overline{\clI} \subseteq \clMcUk{\infty} (\{f\} \, \clI) = \gen{f} \subseteq \clMneg$.
\end{pfclaim}

\begin{claim}
\label{clm:IMcUinf:const}
$\gen{\vak{0}, \vak{1}} = \clVak$, $\gen{\vak{0}} = \clVako$, $\gen{\vak{1}} = \clVaki$, $\gen{\clEmpty} = \clEmpty$.
\end{claim}

\begin{pfclaim}[Proof of Claim~\ref{clm:IMcUinf:Mneg}]
Clear.
\end{pfclaim}

All classes are exhausted, and the proof is complete.
\end{proof}

\begin{proposition}
\label{prop:VoMcUinf}
There are 13 $(\clVo,\clMcUk{\infty})$\hyp{}clonoids, and they are
$\clEmpty$, $\clVako$, $\clVaki$, $\clVak$,
$\clMo$, $\clMoneg$, $\clM$, $\clMneg$,
$\clOX$, $\clIX$, $\clOXCI$, $\clIXCO$, $\clAll$.
\end{proposition}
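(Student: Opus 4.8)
The plan is to follow the template of Proposition~\ref{prop:IMcUinf}. First I would verify that each of the thirteen listed classes is a $(\clVo,\clMcUk{\infty})$\hyp{}clonoid; this is routine, using that $\clOX$ and $\clM$ (hence $\clMo=\clM\cap\clOX$) are clones, that $\clVo\subseteq\clM$ and $\clMcUk{\infty}\subseteq\clM\cap\clOI$, that the composite of a monotone and an antitone function (in either order) is antitone, and, for $\clOXCI=\clOX\cup\clVaki$ and $\clIXCO=\clIX\cup\clVako$, by distributing the two compositions over the union. I would then show there are no others exactly as in Proposition~\ref{prop:IMcUinf}: the thirteen classes form a sublattice under inclusion, and for each listed class $K$ it suffices to prove that every $S\subseteq K$ not contained in a maximal proper listed subclass of $K$ generates $K$. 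The inclusions $\gen[(\clVo,\clMcUk{\infty})]{S}\subseteq K$ come for free, because $\clVo\subseteq\clOX$ and $\clMcUk{\infty}\subseteq\clOX$, so Lemma~\ref{lem:a-pres} together with monotonicity confines the generated clonoid inside whichever of $\clOX,\clIX,\clM,\clMneg$ contains $S$; all the content is in the lower bounds.

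For the monotone atom $\clMo$, take $f\in\clMo\setminus\clVako$, so $f(\vect 0)=0$ and $f(\vect 1)=1$; since $f(h,\dots,h)=h$ for every $h\in\clVo$, all projections and all disjunctions lie in $\{f\}\clVo$. Any $\psi\in\clMo\setminus\{\vak{0}\}$ is then recovered as $\psi=(\psi\wedge c)(\pr_1,\dots,\pr_n,\bigvee_{i}x_i)$: the guard $\bigvee_i x_i\in\clVo$ kills only the false point $\vect 0$ of $\psi$, and $\psi\wedge c\in\clMcUk{\infty}$ because all its true points carry $c=1$, so it is monotone and $1$\hyp{}separating of rank $\infty$. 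Thus $\gen[(\clVo,\clMcUk{\infty})]{f}=\clMo$. The antitone atom $\clMoneg$ is handled dually but needs one twist: for antitone $g\ne\vak{1}$ one has $g(h,\dots,h)=\overline h$, so $\{g\}\clVo$ contains $\vak{1}$ and every co\hyp{}disjunction $\bigwedge_{i\in S}\overline{x_i}$; writing $\psi\in\clMoneg$ as a disjunction of co\hyp{}disjunctions and guarding that disjunction by the inner function $\vak{1}$ (which, unlike a disjunction, carries the value $1$ at $\vect 0$ that $\psi$ requires) reconstructs $\psi$. The classes $\clM=\clMo\cup\clVaki$, $\clMneg=\clMoneg\cup\clVako$, $\clVak$, $\clVako$, $\clVaki$, and $\clEmpty$ then follow by combining generators, e.g.\ $\gen[(\clVo,\clMcUk{\infty})]{f,\vak{1}}=\clM$.

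The hardest step, and the one I expect to be the main obstacle, is generating the full half $\clOX$ (and dually $\clIX$) from a single non\hyp{}monotone generator, since $\clVo$ supplies neither negation nor the constant $1$. Given $f\in\clOX\setminus\clMo$, fix a descent $\vect a<\vect b$ with $f(\vect a)=1>0=f(\vect b)$ and substitute $x$ on $\operatorname{supp}\vect a$, $y$ on $\operatorname{supp}\vect b\setminus\operatorname{supp}\vect a$ (nonempty, as $\vect 0\ne\vect a<\vect b$), and $\vak{0}$ elsewhere; this yields a binary $p\in\{f\}\clVo$ with $p(0,0)=0$, $p(1,0)=1$, $p(1,1)=0$. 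Whatever $p(0,1)$ is, a further $\clVo$\hyp{}substitution distills the gadget $x\wedge\overline y$ (taking $p(x\vee y,y)$ in the XOR case and absorbing $\clVo\clVo=\clVo$ via Lemma~\ref{lem:Associativity}). From $x\wedge\overline y$ the inner functions $\alpha_i=x_i$ and $\beta_j=\overline{x_j}\wedge\bigvee_k x_k$ all lie in $\{f\}\clVo$, and for arbitrary $\varphi\in\clOX$ each minterm (its support nonempty because $\varphi(\vect 0)=0$) is a conjunction of these; guarding the resulting monotone DNF in the $\alpha_i,\beta_j$ by $\bigvee_k x_k$ to make the outer function $1$\hyp{}separating reproduces $\varphi$. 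Hence $\gen[(\clVo,\clMcUk{\infty})]{f}=\clOX$, and $\clIX$ follows by the dual construction built on $\overline x\vee y$ and a $\vak{1}$\hyp{}guard.

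Finally, the top of the lattice is assembled from the atoms. For $\clAll$, a generator $f$ with $f(\vect 0)=1$ and $f\ne\vak{1}$ contributes $\vak{1}$ and every negated projection to $\{f\}\clVo$ (use the descent from $\vect 0$), while a generator $g\in\clOX\setminus\clVako$ contributes $\vak{0}$ and every projection, so $\clOmegaOne\subseteq\{f,g\}\clVo$ and the $(2n+2)$\hyp{}ary function $\varphi'\in\clMcUk{\infty}$ of Claim~\ref{clm:IMcUinf:All} already produces every $\varphi\in\clAll$; then $\clOXCI=\clOX\cup\clVaki$ and $\clIXCO=\clIX\cup\clVako$ follow by joining the results of the previous paragraph with the constants. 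I would note that, unlike on the monotone side of Proposition~\ref{prop:IMcUinf}, the duality of Proposition~\ref{prop:Knid} cannot shortcut the antitone cases here, since dualizing sends $\clMcUk{\infty}$ to $\clMcWk{\infty}$ and $\clVo$ to $\clLambdai$; the antitone claims must therefore be carried out directly, which is exactly why the $\vak{1}$\hyp{}guarded co\hyp{}disjunction argument is needed.
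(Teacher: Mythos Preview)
Your plan is correct and follows essentially the same two–part template as the paper's proof: first verify closure for the meet\hyp{}irreducible classes, then show that any subset of a listed class $K$ not contained in a proper listed subclass generates $K$. The key constructions match closely. For $\clMo$ and $\clMoneg$ you use the same diagonal trick $f(h,\dots,h)$ and a $1$\hyp{}separating guard; for $\clOX$ the paper likewise extracts a binary gadget $\alpha$ with $\alpha(0,0)=0$, $\alpha(1,0)=1$, $\alpha(1,1)=0$ and builds the analogues of your $\beta_j$ as $\alpha(\mathord{\vee_m},\pr_j)$, then produces every $\varphi\in\clOX$ via an explicit $(2m{+}1)$\hyp{}ary function $\widetilde{\varphi}\in\clMcUk{\infty}$ (defined by a Hamming\hyp{}weight condition rather than a DNF, but playing the same role as your guarded monotone DNF). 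The differences are cosmetic: you normalize the gadget all the way to $x\wedge\overline{y}$ via one extra $\clVo$\hyp{}substitution, whereas the paper leaves $\alpha(0,1)$ unspecified; and for $\clAll$ the paper reuses its $\widetilde{\varphi}$ rather than the $\varphi'$ of Claim~\ref{clm:IMcUinf:All}.

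One small inaccuracy to fix in your write\hyp{}up: the closure of $\clOXCI$ and $\clIXCO$ under left composition with $\clMcUk{\infty}$ does \emph{not} follow by ``distributing over the union'', since $C(A\cup B)\supsetneq CA\cup CB$ in general. The paper handles this directly: if $\varphi=\mu(g_1,\dots,g_n)$ with $\mu\in\clMcUk{\infty}$ and $g_i\in\clOX\cup\clVaki$, and $\varphi(\vect{0})=1$, then the tuple $\vect{d}=(g_1(\vect{0}),\dots,g_n(\vect{0}))$ lies in $\mu^{-1}(1)$, and since $d_i=1$ forces $g_i=\vak{1}$, monotonicity of $\mu$ gives $\varphi=\vak{1}$. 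You should insert this argument (or its dual for $\clIXCO$) in place of the distribution claim. Also note that once closure is verified, the containments $\gen[(\clVo,\clMcUk{\infty})]{S}\subseteq K$ are automatic for $S\subseteq K$; invoking Lemma~\ref{lem:a-pres} is unnecessary there.
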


\begin{proof}
Firstly, we need to verify that the given classes are indeed $(\clVo,\clMcUk{\infty})$\hyp{}clonoids.
It suffices to do this only for the meet\hyp{}irreducible classes $\clAll$, $\clOXCI$, $\clIXCO$, $\clOX$, $\clIX$, $\clM$, and $\clMneg$, because the intersection of $(\clVo,\clMcUk{\infty})$\hyp{}clonoids is again a $(\clVo,\clMcUk{\infty})$\hyp{}clonoid.
This is mainly straightforward verification; note that $\clVo$ and $\clMcUk{\infty}$ are subclones of $\clAll$, $\clOX$, $\clM$, so the stability under right and left composition is clear.

We give here the proof of the fact that $\clMcUk{\infty} (\clOXCI) \subseteq \clOXCI$; the remaining cases are left as an exercise for the reader.
Let $\varphi \in \clMcUk{\infty} (\clOXCI)$.
Then $\varphi = f(g_1, \dots, g_n)$ for some $f \in \clMcUk{\infty}$ and $g_1, \dots, g_n \in \clOXCI$.
If $\varphi(\vect{0}) = 0$, then $\varphi \in \clOX \subseteq \clOXCI$ and we are done.
Assume that $\varphi(\vect{0}) = 1$.
Then $\vect{d} = (g_1(\vect{0}), \dots, g_n(\vect{0})) \in f^{-1}(1)$.
Because $g_i \in \clOXCI$ for all $i \in \nset{n}$, we have $d_i = 1$ if and only if $g_i \in \clVaki$, and it follows that for all $\vect{a} \in \{0,1\}^m$, $(g_1(\vect{a}), \dots, g_n(\vect{a})) \geq \vect{d}$.
Because $f$ is monotone, this implies that $(g_1(\vect{a}), \dots, g_n(\vect{a})) \in f^{-1}(1)$, and we conclude that $\varphi = \vak{1} \in \clVaki \subseteq \clOXCI$.

Secondly, we need to show that there are no further $(\clVo,\clMcUk{\infty})$\hyp{}clonoids.
We achieve this by showing that the $(\clVo,\clMcUk{\infty})$\hyp{}clonoid generated by any subset of $\clAll$ is one of the classes given in the statement of this proposition.
More precisely, we show that, for each class $K$ listed above, every subset of $K$ that is not included in any proper subclass of $K$ generates $K$.
This is done in the following claim.

\begin{claim}
\label{clm:VoMcUinf}
\leavevmode
\begin{enumerate}[label=\upshape{(\alph*)}]
\item\label{clm:VoMcUinf:Empty}
$\gen[(\clVc,\clMcUk{\infty})]{\clEmpty} = \clEmpty$.

\item\label{clm:VoMcUinf:C0}
For $f \in \clVako$, we have $\gen[(\clVc,\clMcUk{\infty})]{f} = \clVako$.

\item\label{clm:VoMcUinf:C1}
For $f \in \clVaki$, we have $\gen[(\clVc,\clMcUk{\infty})]{f} = \clVaki$.

\item\label{clm:VoMcUinf:C}
For $f \in \clVak \setminus \clVako$ and $g \in \clVak \setminus \clVaki$, we have $\gen[(\clVc,\clMcUk{\infty})]{f,g} = \clVak$.

\item\label{clm:VoMcUinf:M0}
For $f \in \clMo \setminus \clVako$, we have $\gen[(\clVc,\clMcUk{\infty})]{f} = \clMo$.

\item\label{clm:VoMcUinf:M0neg}
For $f \in \clMoneg \setminus \clVaki$, we have $\gen[(\clVc,\clMcUk{\infty})]{f} = \clMoneg$.

\item\label{clm:VoMcUinf:M}
For $f \in \clM \setminus \clMo$ and $g \in \clM \setminus \clVak$, we have $\gen[(\clVc,\clMcUk{\infty})]{f,g} = \clM$.

\item\label{clm:VoMcUinf:Mneg}
For $f \in \clMneg \setminus \clMoneg$ and $g \in \clM \setminus \clVak$, we have $\gen[(\clVc,\clMcUk{\infty})]{f,g} = \clMneg$.

\item\label{clm:VoMcUinf:OX}
For $f \in \clOX \setminus \clMo$, we have $\gen[(\clVc,\clMcUk{\infty})]{f} = \clOX$.

\item\label{clm:VoMcUinf:IX}
For $f \in \clIX \setminus \clMoneg$, we have $\gen[(\clVc,\clMcUk{\infty})]{f} = \clIX$.

\item\label{clm:VoMcUinf:OXCI}
For $f \in (\clOXCI) \setminus \clOX$ and $g \in (\clOXCI) \setminus \clM$, we have $\gen[(\clVc,\clMcUk{\infty})]{f,g} = \clOXCI$.

\item\label{clm:VoMcUinf:IXCO}
For $f \in (\clIXCO) \setminus \clIX$ and $g \in (\clIXCO) \setminus \clMneg$, we have $\gen[(\clVc,\clMcUk{\infty})]{f,g} = \clIXCO$.

\item\label{clm:VoMcUinf:All}
For $f \in \clAll \setminus (\clOXCI)$ and $g \in \clAll \setminus (\clIXCO)$, we have $\gen[(\clVc,\clMcUk{\infty})]{f,g} = \clAll$.
\end{enumerate}
\end{claim}

\begin{pfclaim}[Proof of Claim~\ref{clm:VoMcUinf}]
\ref{clm:VoMcUinf:Empty}
Clear.

\ref{clm:VoMcUinf:C0}
We have $f = \vak{0}$ and $\{\vak{0}\} \clVc = \clVako$, so $\gen[(\clVc,\clMcUk{\infty})]{f} = \clMcUk{\infty} ( \{f\} \clVc ) = \clMcUk{\infty} \clVako = \clVako$.

\ref{clm:VoMcUinf:C1}
We have $f = \vak{1}$ and $\{\vak{1}\} \clVc = \clVaki$, so $\gen[(\clVc,\clMcUk{\infty})]{f} = \clMcUk{\infty} ( \{f\} \clVc ) = \clMcUk{\infty} \clVaki = \clVaki$.

\ref{clm:VoMcUinf:C}
We have $f \in \clVaki$ and $g \in \clVako$.
Therefore, by \ref{clm:VoMcUinf:C0} and \ref{clm:VoMcUinf:C1}, we have
$\clVak = \clVako \cup \clVaki = \gen[(\clVc,\clMcUk{\infty})]{g} \cup \gen[(\clVc,\clMcUk{\infty})]{f} \subseteq \gen[(\clVc,\clMcUk{\infty})]{f,g} \subseteq \clVak$.

\ref{clm:VoMcUinf:M0}
Because $f$ is a non\hyp{}constant monotone function, we have $f(\vect{0}) = 0$ and $f(\vect{1}) = 1$.
Then $f(\id, \dots, \id) = \id$, and, in fact, for every $\gamma \in \clVo$, $f(\gamma, \dots, \gamma) = \gamma$, so $\clVo \subseteq \{f\} \clVo$.

We need to show that $\clMo \subseteq \gen[(\clVo,\clMcUk{\infty})]{f}$.
Let $\varphi \in \clMo \setminus \clVako$ with $\arity{\varphi} =: m$.
Define $\varphi' \colon \{0,1\}^{m+1} \to \{0,1\}$,
\begin{equation}
\varphi'(a_1, \dots, a_{m+1}) =
\begin{cases}
\varphi(a_1, \dots, a_m) & \text{if $a_m = 1$,} \\
0 & \text{otherwise.}
\end{cases}
\label{eq:phiM}
\end{equation}
By construction, $\varphi' \in \clMcUk{\infty}$, and we have
$\varphi = \varphi'(\pr^{(m)}_1, \dots, \pr^{(m)}_m, \mathord{\vee_m})
\in \clMcUk{\infty} \, \clVo \subseteq \clMcUk{\infty} ( \{f\} \clVo )
= \gen[(\clVo,\clMcUk{\infty})]{f}$.
Moreover,
$\vak{0} = \pr^{(1)}_1(\vak{0}) \in \clMcUk{\infty} \, \clVo \subseteq \clMcUk{\infty} ( \{f\} \clVo ) = \gen[(\clVo,\clMcUk{\infty})]{f}$.

\ref{clm:VoMcUinf:M0neg}
Because $f$ is a non\hyp{}constant antitone function, we have $f(\vect{0}) = 1$ and $f(\vect{1}) = 0$.
Then $f(\id, \dots, \id) = \neg$, and, in fact, for every $\gamma \in \clVo$, $f(\gamma, \dots, \gamma) = \overline{\gamma}$, so $\overline{\clVo} \subseteq \{f\} \clVo$.

We need to show that $\clMoneg \subseteq \gen[(\clVo,\clMcUk{\infty})]{f}$.
Let $\varphi \in \clMoneg \setminus \clVaki$ with $\arity{\varphi} =: m$.
Define $\varphi'' \colon \{0,1\}^{m+1} \to \{0,1\}$,
\begin{equation}
\varphi''(a_1, \dots, a_{m+1}) =
\begin{cases}
\varphi(\overline{a_1}, \dots, \overline{a_m}) & \text{if $a_m = 1$,} \\
0 & \text{otherwise.}
\end{cases}
\label{eq:phiMneg}
\end{equation}
By construction, $\varphi'' \in \clMcUk{\infty}$, and we have
$\varphi = \varphi''(\neg^{(m)}_1, \dots, \neg^{(m)}_m, \vak{1})
\in \clMcUk{\infty} \, \overline{\clVo} \subseteq \clMcUk{\infty} ( \{f\} \clVo )
= \gen[(\clVo,\clMcUk{\infty})]{f}$.
Moreover,
$\vak{1} = \pr^{(1)}_1(\vak{1}) \in \clMcUk{\infty} \, \overline{\clVo} \subseteq \clMcUk{\infty} ( \{f\} \clVo ) = \gen[(\clVo,\clMcUk{\infty})]{f}$.

\ref{clm:VoMcUinf:M}
We have $f \in \clVaki$, so $f = \vak{1}$, and $g$ is a nonconstant monotone function, so $g(\vect{0}) = 0$ and $g(\vect{1}) = 1$ and hence $\id = g(\id, \dots, \id)$; in fact, for every $\gamma \in \clVo$, $g(\gamma, \dots, \gamma) = \gamma$, so $\clVo \subseteq \{g\} \clVo$.
Consequently, $\clV \subseteq \{f, g\} \clVo$.

We need to show that $\clM \subseteq \gen[(\clVc,\clMcUk{\infty})]{f,g}$.
Let $\varphi \in \clM \setminus \clVak = \clMc$, and let $\varphi'$ be as defined in \eqref{eq:phiM}.
Then $\varphi' \in \clMcUk{\infty}$, and $\varphi = \varphi'(\pr^{(m)}_1, \dots, \pr^{(m)}_m, \vak{1}) \in \clMcUk{\infty} \, \clV \subseteq \clMcUk{\infty} ( \{f, g\} \clVo )$.
Moreover, for each $\vak{x} \in \clVak$, we have $\vak{x} = \id(\vak{x}) \in \clMcUk{\infty} \, \clV \subseteq \clMcUk{\infty} ( \{f, g\} \clVo )$.

\ref{clm:VoMcUinf:Mneg}
We have $f \in \clVako$, so $f = \vak{0}$, and $g$ is a nonconstant antitone function, so $g(\vect{0}) = 1$ and $g(\vect{1}) = 0$ and hence $\neg = g(\id, \dots, \id)$; in fact, for every $\gamma \in \clVo$, $g(\gamma, \dots, \gamma) = \overline{\gamma}$, so $\overline{\clVo} \subseteq \{g\} \clVo$.
Consequently, $\overline{\clV} \subseteq \{f, g\} \clVo$.

We need to show that $\clMneg \subseteq \gen[(\clVc,\clMcUk{\infty})]{f,g}$.
Let $\varphi \in \clMneg \setminus \clVak = \clMcneg$, and let $\varphi''$ be as defined in \eqref{eq:phiMneg}.
Then $\varphi'' \in \clMcUk{\infty}$, and $\varphi = \varphi''(\neg^{(m)}_1, \dots, \neg^{(m)}_m, \vak{1}) \in \clMcUk{\infty} \, \overline{\clV} \subseteq \clMcUk{\infty} ( \{f, g\} \clVo )$.
Moreover, for each $\vak{x} \in \clVak$, we have $\vak{x} = \id(\vak{x}) \in \clMcUk{\infty} \, \overline{\clV} \subseteq \clMcUk{\infty} ( \{f, g\} \clVo )$.

\ref{clm:VoMcUinf:OX}
We have $f(\vect{0}) = 0$, and there exist $\vect{u}$ and $\vect{v}$ such that $\vect{u} < \vect{v}$ and $f(\vect{u}) = 1$ and $f(\vect{v}) = 0$.
Without loss of generality,
\[
\vect{u} = (\underbrace{1, \dots, 1}_p, \underbrace{0, \dots, 0}_q, \underbrace{0, \dots, 0}_{n - p - q}),
\qquad
\vect{v} = (\underbrace{1, \dots, 1}_p, \underbrace{1, \dots, 1}_q, \underbrace{0, \dots, 0}_{n - p - q}).
\]
Let
\[
\alpha := f(\underbrace{\pr^{(2)}_1, \dots, \pr^{(2)}_1}_p, \underbrace{\pr^{(2)}_2, \dots, \pr^{(2)}_2}_q, \underbrace{\vak{0}, \dots, \vak{0}}_{n - p - q}).
\]
We have $\alpha \in \{f\} \clVo$ and $\alpha(0,0) = 0$, $\alpha(1,0) = 1$, $\alpha(1,1) = 0$.
We have $\id = \alpha(\pr^{(1)}_1, \vak{0}) \in \{\alpha\} \clVo \subseteq \{f\} \clVo$;
more generally, for any $\gamma \in \clVo$, $\gamma = \alpha(\gamma, \vak{0}) \in \{f\} \clVo$.
For $m \in \IN_{+}$ and $i \in \nset{m}$, let $\beta^{(m)}_i := \alpha(\mathord{\vee_m}, \pr^{(m)}_i)$.
We have $\beta^{(m)}_i \in \{\alpha\} \clVo \subseteq ( \{f\} \clVo ) \clVo = \{f\} \clVo$  and
\[
\beta^{(m)}_i(a_1, \dots, a_m) =
\begin{cases}
\overline{a_i} & \text{if $(a_1, \dots, a_m) \neq \vect{0}$,} \\
0 & \text{if $(a_1, \dots, a_m) = \vect{0}$.}
\end{cases}
\]

We need to show that $\clOX \subseteq \gen[(\clVc,\clMcUk{\infty})]{f}$.
Let $\varphi \in \clOX \setminus \clVako$ with $\arity{\varphi} =: m$, and define
$\widetilde{\varphi} \colon \{0,1\}^{2m+1} \to \{0,1\}$,
\begin{equation}
\begin{split}
\lhs
\widetilde{\varphi}(a_1, \dots, a_{2m+1}) =
\\ &
\begin{cases}
\varphi(a_1, \dots, a_m) & \text{if $a_{n+i} = \overline{a_i}$ for all $i \in \nset{m}$ and $a_{2m+1} = 1$,} \\
1 & \text{if $\card{\{ i \in \nset{2m} \mid a_i = 1 \}} > m$ and $a_{2m+1} = 1$,} \\
0 & \text{otherwise.}
\end{cases}
\end{split}
\label{eq:phiOX}
\end{equation}
By construction, $\widetilde{\varphi} \in \clMcUk{\infty}$, and we have
\[
\varphi = \widetilde{\varphi}(\pr^{(m)}_1, \dots, \pr^{(m)}_m, \beta^{(m)}_1, \dots, \beta^{(m)}_m, \mathord{\vee_m})
\in \clMcUk{\infty} (\{f\} \clVo).
\]
Moreover, $\vak{0} = \id(\vak{0}) \in \clMcUk{\infty} (\{f\} \clVo)$.

\ref{clm:VoMcUinf:IX}
We have $f(\vect{0}) = 1$, and there exist $\vect{u}$ and $\vect{v}$ such that $\vect{u} < \vect{v}$ and $f(\vect{u}) = 0$ and $f(\vect{v}) = 1$.
Without loss of generality,
\[
\vect{u} = (\underbrace{1, \dots, 1}_p, \underbrace{0, \dots, 0}_q, \underbrace{0, \dots, 0}_{n - p - q}),
\qquad
\vect{v} = (\underbrace{1, \dots, 1}_p, \underbrace{1, \dots, 1}_q, \underbrace{0, \dots, 0}_{n - p - q}).
\]
Let
\[
\alpha' := f(\underbrace{\pr^{(2)}_1, \dots, \pr^{(2)}_1}_p, \underbrace{\pr^{(2)}_2, \dots, \pr^{(2)}_2}_q, \underbrace{\vak{0}, \dots, \vak{0}}_{n - p - q}).
\]
We have $\alpha' \in \{f\} \clVo$ and $\alpha'(0,0) = 1$, $\alpha'(1,0) = 0$, $\alpha'(1,1) = 1$.
We have $\neg = \alpha'(\pr^{(1)}_1, \vak{0}) \in \{\alpha'\} \clVo \subseteq \{f\} \clVo$;
more generally, for any $\gamma \in \clVo$, $\overline{\gamma} = \alpha'(\gamma, \vak{0}) \in \{f\} \clVo$.
For $m \in \IN_{+}$ and $i \in \nset{m}$, let $\beta^{\prime (m)}_i := \alpha'(\mathord{\vee_m}, \pr^{(m)}_i)$.
We have $\beta^{\prime (m)}_i \in \{\alpha'\} \clVo \subseteq ( \{f\} \clVo ) \clVo = \{f\} \clVo$  and
\[
\beta^{\prime (m)}_i(a_1, \dots, a_m) =
\begin{cases}
a_i & \text{if $(a_1, \dots, a_m) \neq \vect{0}$,} \\
1 & \text{if $(a_1, \dots, a_m) = \vect{0}$.}
\end{cases}
\]

We need to show that $\clIX \subseteq \gen[(\clVc,\clMcUk{\infty})]{f}$.
Let $\varphi \in \clIX \setminus \clVaki$ with $\arity{\varphi} =: m$, and define
$\widetilde{\varphi}$ as in \eqref{eq:phiOX}.
By construction, $\widetilde{\varphi} \in \clMcUk{\infty}$, and we have
\[
\varphi = \widetilde{\varphi}(\beta^{\prime (m)}_1, \dots, \beta^{\prime (m)}_m, \neg^{(m)}_1, \dots, \neg^{(m)}_m, \vak{1})
\in \clMcUk{\infty} (\{f\} \clVo).
\]
Moreover, $\vak{1} = \id(\vak{1}) \in \clMcUk{\infty} (\{f\} \clVo)$.

\ref{clm:VoMcUinf:OXCI}
We have $f \in \clVaki$ and $g \in \clOX \setminus \clMo$.
We have shown above that $\gen[(\clVc,\clMcUk{\infty})]{f} = \clVaki$ and $\gen[(\clVc,\clMcUk{\infty})]{g} = \clOX$.
It follows that
$\clOXCI = \gen[(\clVc,\clMcUk{\infty})]{g} \cup \gen[(\clVc,\clMcUk{\infty})]{f} \subseteq \gen[(\clVc,\clMcUk{\infty})]{f,g} \subseteq \clOXCI$.

\ref{clm:VoMcUinf:IXCO}
We have $f \in \clVako$ and $g \in \clIX \setminus \clMoneg$.
We have shown above that $\gen[(\clVc,\clMcUk{\infty})]{f} = \clVako$ and $\gen[(\clVc,\clMcUk{\infty})]{g} = \clIX$.
It follows that
$\clIXCO = \gen[(\clVc,\clMcUk{\infty})]{g} \cup \gen[(\clVc,\clMcUk{\infty})]{f} \subseteq \gen[(\clVc,\clMcUk{\infty})]{f,g} \subseteq \clIXCO$.

\ref{clm:VoMcUinf:All}
We have $f \in \clIX \setminus \clVaki$ and $g \in \clOX \setminus \clVako$.
Thus, $f(\vect{0}) = 1$ and there is a $\vect{u}$ such that $f(\vect{u}) = 1$; without loss of generality, $\vect{u} = (1, \dots, 1, 0, \dots 0)$.
Then $\neg = f(\pr^{(1)}_1, \dots, \pr^{(1)}_1, \vak{0}, \dots, \vak{0}) \in \{f\} \clVo$.
In a similar way, we can show that $\id \in \{g\} \clVo$.
It follows that $\clVo \cup \overline{\clVo} \subseteq \{f, g\} \clVo$.

We need to show that $\clAll \subseteq \gen[(\clVc,\clMcUk{\infty})]{f,g}$.
Let $\varphi \in \clAll$, and define $\widetilde{\varphi}$ as in \eqref{eq:phiOX}.
By construction, $\widetilde{\varphi} \in \clMcUk{\infty}$ and
$\varphi = \widetilde{\varphi}(\pr^{(m)}_1, \dots, \pr^{(m)}_m, \neg^{(m)}_1, \dots, \neg^{(m)}_m, \vak{1}) \in \clMcUk{\infty} ( \{f, g\} \clVo )$.
\end{pfclaim}

This completes our proof.
\end{proof}

For the remaining pairs of clones $C_1$ and $C_2$ with $C_1 \in \{\clVo, \clLambdai\}$ and $C_2 \in \{ \clMcUk{\infty}, \clMcWk{\infty} \}$, the $(C_1,C_2)$\hyp{}clonoids can be obtained from Proposition~\ref{prop:VoMcUinf} by applying Proposition~\ref{prop:Knid}.

In order to determine the $(C_1,C_2)$\hyp{}clonoids for $C_1 \supseteq K_1$ and $C_2 \supseteq K_2$, where $K_1 \in \{\clI, \clVo, \clLambdai\}$ and $K_2 \in \{ \clMcUk{\infty}, \clMcWk{\infty} \}$, we simply observe that they are also $(K_1,K_2)$\hyp{}clonoids by Lemma~\ref{lem:clonmon}.
Therefore, it suffices to identify, for each $(K_1,K_2)$\hyp{}clonoid $K$ determined above in Propositions~\ref{prop:IMcUinf} and \ref{prop:VoMcUinf}, the clones with which $K$ is stable under left and right composition.
This work has actually been done already in an earlier paper \cite{Lehtonen-SM}, and we merely quote the relevant results here.

\begin{theorem}[{see \cite[Theorem~5.1]{Lehtonen-SM}}]
For each $(C_1,C_2)$-clonoid $K$, for $C_1 \in \{\clI, \clVo, \clLambdai\}$, $C_2 \in \{\clMcUk{\infty}, \clMcWk{\infty}\}$, as determined in Propositions \ref{prop:IMcUinf} and \ref{prop:VoMcUinf},
the clones $C_1^K$ and $C_2^K$ prescribed in Table~\ref{table:stable}
have the property that for every clone $C$, it holds that $KC \subseteq K$ if and only if $C \subseteq C_1^K$ and $CK \subseteq K$ if and only if $C \subseteq C_2^K$.
\end{theorem}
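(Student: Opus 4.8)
The plan is to reduce the statement to a single lattice-theoretic fact together with a finite verification. Fix a clonoid $K$ from Proposition~\ref{prop:IMcUinf} or \ref{prop:VoMcUinf}, and consider the set $\mathcal{R}_K := \{\, C \mid \text{$C$ is a clone on $\{0,1\}$ with $KC \subseteq K$} \,\}$ of right\hyp{}stabilizing clones and the analogous set $\mathcal{L}_K := \{\, C \mid CK \subseteq K \,\}$ of left\hyp{}stabilizing clones. By the monotonicity of function class composition (cf.\ Lemma~\ref{lem:clonmon}), both $\mathcal{R}_K$ and $\mathcal{L}_K$ are down\hyp{}closed in Post's lattice. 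The core of the argument is that each is moreover closed under joins; granting this, $\mathcal{R}_K$ and $\mathcal{L}_K$ are principal ideals, say $\mathcal{R}_K = [\clIc, C_1^K]$ and $\mathcal{L}_K = [\clIc, C_2^K]$ with $C_1^K = \bigvee \mathcal{R}_K$ and $C_2^K = \bigvee \mathcal{L}_K$. Then $KC \subseteq K$ iff $C \subseteq C_1^K$, and $CK \subseteq K$ iff $C \subseteq C_2^K$, which is exactly the asserted equivalence; it only remains to identify the top elements $C_1^K$ and $C_2^K$, i.e.\ to confirm Table~\ref{table:stable}.

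To establish join\hyp{}closure I would exploit that every clone is a minion and invoke the Associativity Lemma~\ref{lem:Associativity}. Suppose $KC \subseteq K$ and $KC' \subseteq K$. Write the clone $\langle C \cup C'\rangle$ as the union of the alternating composition products $A_1 A_2 \cdots A_k$ with $A_i \in \{C, C'\}$; these products are composition\hyp{}closed and contain $C \cup C'$ and the projections, because products of clones may be freely reassociated by Lemma~\ref{lem:Associativity}(ii). It therefore suffices to show $K(A_1 \cdots A_k) \subseteq K$. Peeling off the leftmost factor via Lemma~\ref{lem:Associativity}(ii) gives $K(A_1(A_2 \cdots A_k)) = (KA_1)(A_2 \cdots A_k) \subseteq K(A_2 \cdots A_k)$, since $KA_1 \subseteq K$; an induction on $k$ then yields $K\langle C \cup C'\rangle \subseteq K$. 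A directed join reduces to this case, as any single composite uses only finitely many inner functions, lying in a common member of the family, so $\mathcal{R}_K$ is closed under arbitrary joins. The argument for $\mathcal{L}_K$ is symmetric, peeling off the rightmost factor instead.

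With the principal\hyp{}ideal reduction in place, the remaining content is the finite computation of $C_1^K$ and $C_2^K$ for each $K$ in Propositions~\ref{prop:IMcUinf} and \ref{prop:VoMcUinf} (the cases with target $\clMcWk{\infty}$ and with source clones $\clVo$ or $\clLambdai$ follow by duality through Proposition~\ref{prop:Knid}). For each $K$ one checks, first, that the prescribed $C_1^K$ and $C_2^K$ do stabilize $K$: this is immediate from the explicit descriptions, e.g.\ $\clVak C = \clVak$ for every clone $C$ forces $C_1^{\clVak} = \clAll$, while $C \clVako \subseteq \clVako$ holds exactly when $C \subseteq \clOX$, forcing $C_2^{\clVako} = \clOX$, and for a clone such as $\clM$ one gets $C_1^{\clM} = C_2^{\clM} = \clM$. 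Second, one checks maximality, for which the principal\hyp{}ideal property means it is enough to exhibit, for each upper cover $C$ of $C_1^K$ in Post's lattice, a witness $f \in K$ and $g \in C$ with $f$ composed with $g$ (padded by projections) lying outside $K$, and dually for $C_2^K$. I expect the main obstacle to lie entirely in this bookkeeping: running the cover\hyp{}by\hyp{}cover maximality test through Post's lattice consistently across all the clonoids of Propositions~\ref{prop:IMcUinf} and \ref{prop:VoMcUinf} and their duals. Since this computation was already carried out in \cite{Lehtonen-SM}, the proof ultimately amounts to quoting those values, which are collected in Table~\ref{table:stable}.
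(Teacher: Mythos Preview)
Your proposal is correct. The paper itself does not supply a proof of this theorem: it is stated with the citation ``see \cite[Theorem~5.1]{Lehtonen-SM}'' and accompanied only by Table~\ref{table:stable}, the surrounding text making explicit that ``this work has actually been done already in an earlier paper \cite{Lehtonen-SM}, and we merely quote the relevant results here.'' So there is no in\hyp{}paper argument to compare against.

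What you add beyond the paper is the structural lemma that $\mathcal{R}_K$ and $\mathcal{L}_K$ are principal ideals in Post's lattice. Your join\hyp{}closure argument via the Associativity Lemma is sound: since every $A_i$ is a clone and hence a minion, products $A_1 \cdots A_k$ are unambiguously associative, their union over all alternating words is the clone $\langle C \cup C'\rangle$, and the peeling step $K(A_1(A_2 \cdots A_k)) = (KA_1)(A_2 \cdots A_k) \subseteq K(A_2 \cdots A_k)$ is exactly Lemma~\ref{lem:Associativity}(ii) with $J = A_1$. The directed\hyp{}join reduction is also fine, since a directed union of clones is a clone and any composite $f(g_1,\dots,g_n)$ with $g_j \in \bigcup_i C_i$ has all $g_j$ in a common $C_i$. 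For the identification of the actual top elements $C_1^K$, $C_2^K$ you end up, like the paper, deferring to \cite{Lehtonen-SM}; the cover\hyp{}by\hyp{}cover maximality check you sketch is the right shape for a self\hyp{}contained verification, but carrying it out would duplicate that reference rather than the present paper.
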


\begin{table}
\setlength{\tabcolsep}{3.9pt}
\begin{tabular}{ccc@{\quad\qquad}ccc}
\toprule
$K$ & $K C \subseteq K$ & $C K \subseteq K$ & $K$ & $K C \subseteq K$ & $C K \subseteq K$  \\
& iff $C \subseteq \ldots$ & iff $C \subseteq \ldots$ & & iff $C \subseteq \ldots$ & iff $C \subseteq \ldots$ \\
\midrule
$\clAll$ & $\clAll$ & $\clAll$ & & & \\
$\clOXCI$ & $\clOX$ & $\clM$ & $\clIXCO$ & $\clOX$ & $\clM$ \\
$\clXICO$ & $\clXI$ & $\clM$ & $\clXOCI$ & $\clXI$ & $\clM$ \\
$\clOX$ & $\clOX$ & $\clOX$ & $\clIX$ & $\clOX$ & $\clXI$ \\
$\clXI$ & $\clXI$ & $\clXI$ & $\clXO$ & $\clXI$ & $\clOX$ \\
$\clM$ & $\clM$ & $\clM$ & $\clMneg$ & $\clM$ & $\clM$ \\
$\clMo$ & $\clMo$ & $\clMo$ & $\clMoneg$ & $\clMo$ & $\clMi$ \\
$\clMi$ & $\clMi$ & $\clMi$ & $\clMineg$ & $\clMi$ & $\clMo$ \\
$\clVak$ & $\clAll$ & $\clAll$ & & & \\
$\clVako$ & $\clAll$ & $\clOX$ & $\clVaki$ & $\clAll$ & $\clXI$ \\
$\clEmpty$ & $\clAll$ & $\clAll$ & & & \\
\bottomrule
\end{tabular}
\bigskip
\caption{$(C_1,C_2)$-clonoids $K$, for $C_1 \in \{\protect\clI, \protect\clVo, \protect\clLambdai\}$, $C_2 \in \{\protect\clMcUk{\infty}, \protect\clMcWk{\infty}\}$, and their stability under right and left composition with clones of Boolean functions.}
\label{table:stable}
\end{table}


\section{Summary and final remarks}
\label{sec:summary}

\subsection{Known cardinalities of lattices of $(C_1,C_2)$\hyp{}clonoids of Boolean functions}
Table~\ref{table:card} summarizes what is currently known about the cardinalities of lattices of $(C_1,C_2)$\hyp{}clonoids of Boolean functions.
The rows correspond to source clones $C_1$, and the columns correcpond to target clones $C_2$.
The entry in row $C_1$ column $C_2$ indicates whether the lattice $\closys{(C_1,C_2)}$ is finite, countably infinite, or uncountable.
Some entries are known from earlier work, while some entries were determined in the current paper.
Moreover, some entries are implied by others by using Lemma~\ref{lem:clonmon}.
Namely, if $C_1 \subseteq C'_1$ and $C_2 \subseteq C'_2$, then $\closys{(C'_1,C'_2)}$ is a subposet of $\closys{(C_1,C_2)}$.
Consequently, if $\closys{(C_1,C_2)}$ is finite then so is $\closys{(C'_1,C'_2)}$; and if $\closys{(C'_1,C'_2)}$ is uncountable then so is $\closys{(C_1,C_2)}$.
In the cases when $\closys{(C_1,C_2)}$ was shown to be finite or countably infinite, the $(C_1,C_2)$\hyp{}clonoids have also been enumerated, although in some cases somewhat implicitly.
In the table, the footnote marks indicate references to the relevant results in the literature.

In order to keep the table relatively simple, we group together clones whose rows or columns would be identical according to our theoretical results.
Firstly, the intervals of target clones described in Corollary \ref{cor:Bfintervals} are grouped together.
Secondly, each (source or target) clone may be grouped together with its dual by Corollary~\ref{cor:duals}.
Moreover, Proposition~\ref{prop:card-Ic-Istar} indicates that the entries in columns $[\clIc,\clI]$ and $[\clIstar,\clOmegaOne]$ are equal for certain source clones.

The case when $C_1 = \clIc$
(the first row of Table~\ref{table:card}) is Sparks's Theorem~\ref{thm:Sparks}.
For each clone $C_2$ for which $\closys{(\clIc,C_2)}$ is finite, so is $\closys{(C_1,C_2)}$ for every clone $C_1$; this is the last column of Table~\ref{table:card} and these finite clonoid lattices were described in \cite[Theorems~4.1, 5.1]{Lehtonen-SM} and \cite[Theorem~6.1.1, Propositions~7.2.2, 7.3.1]{Lehtonen-nu}.

In the case when $C_2 \supseteq \clLc$ (the second last column of Table~\ref{table:card}), the clonoid lattices $\closys{(C_1,C_2)}$ were explicitly described and their cardinalities were determined in \cite[Theorems~6.1, 7.1]{CouLeh-Lcstability}.

The results for the case when $C_2 = \clIc$ follow from earlier work on so\hyp{}called $C$\hyp{}minors that was presented in a series of papers by the present author and his coauthors \cite{Lehtonen-ULM,LehNes-clique,LehSze-discriminator,LehSze-finite,LehSze-quasilinear}.
These results were translated into the language of $(C_1,C_2)$\hyp{}clonoids in \cite{Lehtonen-discmono}; moreover, the clonoid lattices $\closys{(C_1,C_2)}$ were explicitly described in the case when $C_1$ includes $\clMc$ or $\clSc$.

The main results of the current paper, Theorems~\ref{thm:uncountable}, \ref{thm:U2Uinf}, and \ref{thm:finite}, and also Proposition~\ref{prop:card-Ic-Istar} give several new entries to the table.


\newenvironment{outdent}
{\begin{list}{}{\leftmargin-3cm\rightmargin\leftmargin}\centering\item\relax}
{\end{list}\ignorespacesafterend}

\newcommand{\FCU}[2]{{#1}\makebox[0pt][l]{ #2}}
\newcommand{\FNfirst}[2][]{%
\newcounter{#2}%
\begingroup%
\ifthenelse{\equal{#1}{*}}{\renewcommand{\thefootnote}{(\arabic{footnote})}{}}{}%
\footnotemark\setcounter{#2}{\value{footnote}}%
\endgroup}

\newcommand{\FNother}[2][]{%
\begingroup%
\ifthenelse{\equal{#1}{*}}{\renewcommand{\thefootnote}{(\arabic{footnote})}{}}{}%
\footnotemark[\value{#2}]%
\endgroup}

\newcommand{\FNtxt}[2]{\footnotemark[\value{#1}] {#2}}
\newcommand{\FNseur}{\textsuperscript{\textsuperscript{$\ast$}}}
\newcommand{\FNcomma}{\textsuperscript{,}}

\begin{table}
\setcounter{footnote}{0}
\begin{outdent}\small
\begin{tabular}{r*{8}{l}}
     & & & & $\clMcUk{\infty}$ & & & & \\
     & & & & $\clMUk{\infty}$ & & & & $2 \leq \ell < \infty$ \\
     & & & $[\clVc,\clV]$ & $\clMcWk{\infty}$ & $\clTcUk{\infty}$ & $\clUk{\infty}$ & & $[\{\clSM, \clMcUk{\ell},$ \\
     & $[\clIc,\clI]$ & $[\clIstar,\clOmegaOne]$ & $[\clLambdac,\clLambda]$ & $\clMWk{\infty}$ & $\clTcWk{\infty}$ & $\clWk{\infty}$ & $[\clLc,\clL]$ & $\phantom{[\{}\clMcWk{\ell} \},\clAll]$ \\
\midrule
$\clIc$ & U\FNfirst{Sparks}\FNcomma\FNfirst{discmono-U}\FNcomma\FNfirst{X-U1}\FNcomma\FNfirst{X-U2}\FNcomma\FNfirst{IcIstar} & U\FNother{Sparks}\FNcomma\FNother{X-U1}\FNcomma\FNother{IcIstar} & U\FNother{Sparks}\FNcomma\FNother{X-U1}\FNcomma\FNother{X-U2} & U\FNother{Sparks}\FNcomma\FNother{X-U1}\FNcomma\FNother{X-U2} & U\FNother{Sparks}\FNcomma\FNother{X-U1}\FNcomma\FNother{X-U2} & U\FNother{Sparks}\FNcomma\FNother{X-U1}\FNcomma\FNother{X-U2} & C\FNother{Sparks}\FNcomma\FNfirst{LinSt} & F\FNother{Sparks}\FNcomma\FNfirst{SMnu} \\
$\clIo$, $\clIi$ & U\FNother{discmono-U}\FNcomma\FNother{X-U1}\FNcomma\FNother{X-U2}\FNcomma\FNother{IcIstar} & U\FNother{X-U1}\FNcomma\FNother{IcIstar} & U\FNother{X-U1}\FNcomma\FNother{X-U2} & U\FNother{X-U1}\FNcomma\FNother{X-U2} & U\FNother{X-U1}\FNcomma\FNother{X-U2} & U\FNother{X-U1}\FNcomma\FNother{X-U2} & C\FNother{LinSt} & F\FNother{SMnu} \\
$\clI$ & U\FNother{discmono-U}\FNcomma\FNother{X-U1} & U\FNother{X-U1} & U\FNother{X-U1} & F\FNfirst{X-F} & F\FNother{X-F} & F\FNother{X-F} & C\FNother{LinSt} & F\FNother{SMnu} \\
$\clIstar$ & U\FNother{discmono-U}\FNcomma\FNother{X-U1} & U\FNother{X-U1} & U\FNother{X-U1} & U\FNother{X-U1} & U\FNother{X-U1} & U\FNother{X-U1} & C\FNother{LinSt} & F\FNother{SMnu} \\
$\clOmegaOne$ & U\FNother{discmono-U}\FNcomma\FNother{X-U1} & U\FNother{X-U1} & U\FNother{X-U1} & F\FNother{X-F} & F\FNother{X-F} & F\FNother{X-F} & C\FNother{LinSt} & F\FNother{SMnu} \\
$\clVc$, $\clLambdac$ & U\FNother{discmono-U}\FNcomma\FNother{X-U1}\FNcomma\FNother{X-U2}\FNcomma\FNother{IcIstar} & U\FNother{X-U1}\FNcomma\FNother{IcIstar} & U\FNother{X-U1}\FNcomma\FNother{X-U2} & U\FNother{X-U2} & U\FNother{X-U2} & U\FNother{X-U2} & F\FNother{LinSt} & F\FNother{SMnu} \\
$\clVo$, $\clLambdai$ & U\FNother{discmono-U}\FNcomma\FNother{X-U1}\FNcomma\FNother{IcIstar} & U\FNother{X-U1}\FNcomma\FNother{IcIstar} & U\FNother{X-U1} & F\FNother{X-F} & F\FNother{X-F} & F\FNother{X-F} & F\FNother{LinSt} & F\FNother{SMnu} \\
$\clVi$, $\clLambdao$ & U\FNother{discmono-U}\FNcomma\FNother{X-U1}\FNcomma\FNother{X-U2}\FNcomma\FNother{IcIstar} & U\FNother{X-U1}\FNcomma\FNother{IcIstar} & U\FNother{X-U1}\FNcomma\FNother{X-U2} & U\FNother{X-U2} & U\FNother{X-U2} & U\FNother{X-U2} & F\FNother{LinSt} & F\FNother{SMnu} \\
$\clV$, $\clLambda$ & U\FNother{discmono-U}\FNcomma\FNother{X-U1} & U\FNother{X-U1} & U\FNother{X-U1} & F\FNother{X-F} & F\FNother{X-F} & F\FNother{X-F} & F\FNother{LinSt} & F\FNother{SMnu} \\
$[\clMcUk{\infty}, \clUk{2}]$ & \multirow{2}{*}{U\FNother{discmono-U}\FNcomma\FNother{X-U2}\FNcomma\FNother{IcIstar}} & \multirow{2}{*}{U\FNother{IcIstar}} & \multirow{2}{*}{U\FNother{X-U2}} & \multirow{2}{*}{U\FNother{X-U2}} & \multirow{2}{*}{U\FNother{X-U2}} & \multirow{2}{*}{U\FNother{X-U2}} & \multirow{2}{*}{F\FNother{LinSt}} & \multirow{2}{*}{F\FNother{SMnu}} \\
$[\clMcWk{\infty}, \clWk{2}]$ & & & & & & & & \\
$\clLc$ & U\FNother{discmono-U}\FNcomma\FNother{X-U1}\FNcomma\FNother{IcIstar} & U\FNother{X-U1}\FNcomma\FNother{IcIstar} & ? & ? & ? & ? & C\FNother{LinSt} & F\FNother{SMnu} \\
$\clLo$, $\clLi$ & U\FNother{discmono-U}\FNcomma\FNother{X-U1}\FNcomma\FNother{IcIstar} & U\FNother{X-U1}\FNcomma\FNother{IcIstar} & ? & ? & ? & ? & C\FNother{LinSt} & F\FNother{SMnu} \\
$\clLS$ & U\FNother{discmono-U}\FNcomma\FNother{X-U1} & U\FNother{X-U1} & ? & ? & ? & ? & C\FNother{LinSt} & F\FNother{SMnu} \\
$\clL$ & U\FNother{discmono-U}\FNcomma\FNother{X-U1} & U\FNother{X-U1} & ? & F\FNother{X-F} & F\FNother{X-F} & F\FNother{X-F} & C\FNother{LinSt} & F\FNother{SMnu} \\
$\clSM$ & U\FNother{discmono-U}\FNcomma\FNother{X-U2}\FNcomma\FNother{IcIstar} & U\FNother{IcIstar} & U\FNother{X-U2} & U\FNother{X-U2} & U\FNother{X-U2} & U\FNother{X-U2} & F\FNother{LinSt} & F\FNother{SMnu} \\
$[\clMc, \clM]$ & C\FNfirst{discmono-C} & C\FNother{discmono-C} & F\FNfirst{discmono-F-M} & F\FNother{discmono-F-M} & F\FNother{discmono-F-M} & F\FNother{discmono-F-M} & F\FNother{LinSt} & F\FNother{SMnu} \\
$[\clSc, \clAll]$ & F\FNfirst{discmono-F-S} & F\FNother{discmono-F-S} & F\FNother{discmono-F-S} & F\FNother{discmono-F-S} & F\FNother{discmono-F-S} & F\FNother{discmono-F-S} & F\FNother{LinSt} & F\FNother{SMnu} \\
\end{tabular}
\end{outdent}

\medskip
Glossary:
F -- finite;
C -- countably infinite;
U -- uncountable;
? -- unknown

\medskip
\begin{tablenotes}
\item \FNtxt{Sparks}{Sparks \cite[Theorem~1.3]{Sparks-2019}, Theorem~\ref{thm:Sparks}} \newline
\item \FNtxt{discmono-U}{\cite[Theorem~5.3]{Lehtonen-discmono}}  \newline
\item \FNtxt{X-U1}{Theorem \ref{thm:uncountable}} \newline
\item \FNtxt{X-U2}{Theorem \ref{thm:U2Uinf}} \newline
\item \FNtxt{IcIstar}{Proposition~\ref{prop:card-Ic-Istar}} \newline
\item \FNtxt{LinSt}{Couceiro, Lehtonen \cite[Theorems~6.1, 7.1]{CouLeh-Lcstability}} \newline
\item \FNtxt{SMnu}{\cite[Theorems~4.1, 5.1]{Lehtonen-SM}, \cite[Theorem~6.1.1, Propositions~7.2.2, 7.3.1]{Lehtonen-nu}} \newline
\item \FNtxt{X-F}{Theorem \ref{thm:finite}} \newline
\item \FNtxt{discmono-C}{\cite[Theorems~6.6, 6.7]{Lehtonen-discmono}} \newline
\item \FNtxt{discmono-F-M}{\cite[Propositions~6.12, 6.13, Theorem 6.14]{Lehtonen-discmono}} \newline
\item \FNtxt{discmono-F-S}{\cite[Theorems 7.2, 7.8, Propositions 7.4, 7.7]{Lehtonen-discmono}} \newline
\end{tablenotes}
\medskip
\caption{Cardinalities of $(C_1,C_2)$\hyp{}clonoid lattices.}
\label{table:card}
\end{table}

\subsection{Open problems and final remarks}
Table~\ref{table:card} contains a few question marks.
They designate the pairs $(C_1,C_2)$ of clones on $\{0,1\}$ for which the cardinality of the lattice $\closys{(C_1,C_2)}$ is not yet known.
This remains a topic for further research.

One might attempt to employ the proof technique of Section~\ref{sec:uncountable} to show that these clonoid lattices are uncountable (if this were indeed the case).
For this, it would be necessary to identify a different countably infinite family of functions than the functions $f_n$ and $q_n$ defined in Definitions~\ref{def:fn} and \ref{def:qn}.
The following lemma illustrates that the functions $f_n$ and $q_n$ fail to have the desired properties here.
In the proof, we use the notation $\mathcal{P}_k(S)$ for the set of all $k$\hyp{}element subsets of $S$.

\begin{lemma}
For any $n \geq 5$,
$f_{n+2} \in \gen[(\clLc,\clLambdac)]{f_n} = \clLambdac ( \{f_n\} \clLc )$
and
$q_{n+2} \in \gen[(\clLc,\clLambdac)]{q_n} = \clLambdac ( \{q_n\} \clLc )$.
\end{lemma}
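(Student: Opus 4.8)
The plan is to realize $f_{n+2}$ and $q_{n+2}$ as conjunctions of just \emph{two} functions obtained from $f_n$ (resp.\ $q_n$) by merging three of the $n+2$ variables into a single XOR. Recall first that $\clLc$ consists exactly of the XORs $x_{i_1} + \dots + x_{i_{2k+1}}$ of an odd number of distinct variables (the linear functions fixing both $\vect 0$ and $\vect 1$), while $\clLambdac$ consists of the nonempty conjunctions; hence $\gen[(\clLc,\clLambdac)]{f_n} = \clLambdac(\{f_n\}\clLc)$ is the set of all conjunctions of functions $f_n(\ell_1, \dots, \ell_n)$ with $\ell_1, \dots, \ell_n \in \clLc$. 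Since $x_a + x_b + x_c \in \clLc$ and every projection lies in $\clLc$, for each $3$-element set $T = \{a,b,c\} \subseteq \nset{n+2}$ the function $g_T := f_n(x_a + x_b + x_c,\, (x_j)_{j \in \nset{n+2}\setminus T})$ (with the remaining $n-1$ variables in any fixed order) belongs to $\{f_n\}\clLc$.

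The first step is to compute the true set of $g_T$. Let $s$ be the number of indices $i \in T$ with $x_i = 1$ (so $s \in \{0,1,2,3\}$ and $x_a + x_b + x_c = s \bmod 2$), and put $w := w(\vect x)$. The tuple fed to $f_n$ has Hamming weight $(s \bmod 2) + (w - s)$, and a short case analysis on $s$ shows that $g_T(\vect x) = 1$ exactly when $s \le 1$ and $w \in \{1, n-1\}$, or $s \ge 2$ and $w \in \{3, n+1\}$. From this I read off three facts, all using $n \ge 5$: (i) every $g_T$ takes value $1$ on all true points of $f_{n+2}$, i.e.\ on all tuples of weight $1$ or $n+1$; (ii) every false point of weight $\notin \{3, n-1\}$ is sent to $0$ by \emph{every} $g_T$; (iii) a false point of weight $3$ with support $P$ satisfies $g_T = 1$ iff $\card{T \cap P} \ge 2$, while one of weight $n-1$ with support $Q$ satisfies $g_T = 1$ iff $\card{T \cap (\nset{n+2}\setminus Q)} \ge 2$. (Here $n \ge 5$ guarantees that $3$ and $n-1$ are distinct and differ from $1$ and $n+1$, which is precisely what makes (ii) hold.)

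The combinatorial heart is the observation that the two troublesome families in (iii) are governed by a single condition, since $\nset{n+2}\setminus Q$ ranges over all $3$-subsets as $Q$ ranges over all $(n-1)$-subsets: it suffices to choose triples $T_1, T_2$ such that \emph{every} $3$-subset $R \subseteq \nset{n+2}$ meets one of them in at most one point. Two \emph{disjoint} triples (which exist since $n \ge 5$), e.g.\ $T_1 = \{1,2,3\}$ and $T_2 = \{4,5,6\}$, do this, because $\card{R \cap T_1} + \card{R \cap T_2} = \card{R \cap (T_1 \cup T_2)} \le 3$ forces $\min_i \card{R \cap T_i} \le 1$. Combining (i)--(iii) with this choice yields $f_{n+2} = g_{T_1} \wedge g_{T_2} \in \clLambdac(\{f_n\}\clLc)$.

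Finally, the $q_n$ case runs along identical lines with $g_T := q_n(x_a + x_b + x_c,\, (x_j)_{j \in \nset{n+2}\setminus T})$; here the $n-1$ unmerged coordinates contribute weight at most $n-1$, so the argument tuple reaches weight $n$ only through the merged coordinate, and the case analysis gives $g_T(\vect x) = 1$ iff $s=0,\,w=1$, or $s=1,\,w\in\{1,n\}$, or $s=2,\,w=3$, or $s=3,\,w\in\{3,n+2\}$. The only surviving false weights are now $3$ and $n$: weight $3$ is handled exactly as before, while a false point of weight $n$ has a $2$-element complement $D$ and survives $g_T$ iff $D \subseteq T$, so two disjoint triples again suffice since a $2$-set cannot lie in both. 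I expect the one delicate part to be the four-case weight bookkeeping and the verification that $n \ge 5$ eliminates all but the two ``boundary'' false weights; once that is pinned down, the reduction to a pair of disjoint triples is immediate.
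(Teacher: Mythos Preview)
Your proof is correct and follows the same basic mechanism as the paper's: for a triple $T$, compose $\varphi_n$ with one ternary XOR $\sum_{i\in T}x_i$ and $n-1$ projections to get $g_T\in\{\varphi_n\}\clLc$, then take a conjunction. The difference is in how many conjuncts are used. The paper simply takes $\theta=\bigwedge_{T\in\mathcal P_3(\nset{n+2})}g_T$ and checks, by a case split on $w(\vect a)$, that every false point of $\varphi_{n+2}$ is killed by \emph{some} $g_T$; no attempt is made to economize on $T$'s. You instead carry out a sharper case analysis, isolating the two ``surviving'' false weights ($3$ and $n-1$ for $f_n$; $3$ and $n$ for $q_n$), and then observe that a single pair of disjoint triples $T_1,T_2$ already handles both families. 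So your witness uses two conjuncts rather than $\binom{n+2}{3}$; this is a genuine simplification of the construction, at the cost of a slightly more detailed bookkeeping of which $(s,w)$ pairs make $g_T$ equal to $1$. Both arguments rely on $n\ge 5$ in the same places (to separate the exceptional weights from $1,\,n+1$, resp.\ $1,\,n+2$, and from each other).
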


\begin{proof}
Let $(\varphi_i)_{i \geq 5}$ be one of the families $(f_i)_{i \geq 5}$ or $(q_i)_{i \geq 5}$ of Boolean functions.

Let $n \in \IN_{+}$ with $n \geq 5$.
For each $S \in \mathcal{P}_3(\nset{n+2})$, let
$g^S \colon \{0,1\}^{n+2} \to \{0,1\}^n$ be the function
$g^S = (g^S_1, \dots, g^S_{n+2})$,
where
$g^S_1, \dots, g^S_{n-1}$ are the projections $\pr^{(n+2)}_i$ for $i \in \nset{n+2} \setminus S$ in some order
and
$g^S_n := \sum_{i \in S} x_i$.
Let $\varphi_S := \varphi_n \circ g^S$.

Now, define $\theta := \bigwedge_{S \in \mathcal{P}_3(\nset{n+2})} \varphi_S$.
Note that the $g^S_i$ are in $\clLc$, so $\varphi_S \in \{\varphi_n\} \clLc$, and therefore $\theta \in \clLambdac (\{\varphi_n\} \clLc) = \gen[(\clLc,\clLambdac)]{\varphi_n}$ by definition.
Our goal is to show that $\theta = \varphi_{n+2}$.

\begin{claim}
\label{clm:nonac-gS}
Let $\vect{a} \in \{0,1\}^{n+2}$.
\begin{enumerate}[label={\upshape(\roman*)}]
\item For all $S \in \mathcal{P}_3(\nset{n+2})$, $g^S(\vect{0}) = \vect{0}$.
\item For all $S \in \mathcal{P}_3(\nset{n+2})$, $g^S(\vect{1}) = \vect{1}$.
\item If $w(\vect{a}) = 1$, then for all $S \in \mathcal{P}_3(\nset{n+2})$, $w(g^S(\vect{a})) = 1$.
\item If $w(\vect{a}) = n+1$, then for all $S \in \mathcal{P}_3(\nset{n+2})$, $w(g^S(\vect{a})) = n-1$.
\item If $2 \leq w(\vect{a}) \leq n$, then for some $S \in \mathcal{P}_3(\nset{n+2})$, $w(g^S(\vect{a})) \notin \{1, n-1, n\}$.
\end{enumerate}
\end{claim}

\begin{pfclaim}[Proof of Claim~\ref{clm:nonac-gS}]
It is clear that $g^S(\vect{0}) = \vect{0}$ and $g^S(\vect{1}) = \vect{1}$ for all $S \in \mathcal{P}_3(\nset{n+2})$ because projections map $\vect{0}$ to $0$ and $\vect{1}$ to $1$, and $g^S_n(\vect{0}) = 0 + 0 + 0 = 0$ and $g^S_n(\vect{1}) = 1 + 1 + 1 = 1$.

Consider now the case when $w(\vect{a}) = 1$, i.e., $\vect{a} = \vect{e}_i$ for some $i \in \nset{n+2}$.
Let $S \in \mathcal{P}_3(\nset{n+2})$.
If $i \notin S$, then $g^S_n(\vect{e}_i) = 0$ and $g^S_j(\vect{e}_i) = 1$ for precisely one $j \in \nset{n-1}$.
If $i \in S$, then $g^S_n(\vect{e}_i) = 1$ and $g^S_j(\vect{e}_i) = 0$ for all $j \in \nset{n-1}$.
In either case, we have $w(g^S_n(\vect{e}_i)) = 1$.

The claim about the case when $w(\vect{a}) = n+1$ is proved in a similar way; now $\vect{a} = \overline{\vect{e}_i}$ for some $i \in \nset{n+2}$.

Assume now that $2 \leq w(\vect{a}) \leq n$.
We need to find an $S \in \mathcal{P}_3(\nset{n+2})$ such that $w(g^S(\vect{a})) \notin \{1, n-1, n\}$.
Let $A := \{ \, i \in \nset{n+2} \mid a_i = 1 \, \}$.

If $w(\vect{a}) = 2$, then choose $S \in \mathcal{P}_3(\nset{n+2})$ such that $A \subseteq S$.
Then $g^S(\vect{a}) = \vect{0}$, and hence $w(g^S(\vect{a})) = 0$, because the projections $g^S_1, \dots, g^S_{n-1}$ map $\vect{a}$ to $0$, and $g^S_n(\vect{a}) = 1 + 1 + 0 = 0$.

If $w(\vect{a}) = 3$, then choose $S \in \mathcal{P}_3(\nset{n+2})$ such that $S \subseteq \nset{n+2} \setminus A$.
Then $w(g^S(\vect{a})) = 3$ because exactly three of the projections $g^S_1, \dots, g^S_{n-1}$ map $\vect{a}$ to $1$, and $g^S_n(\vect{a}) = 0 + 0 + 0 = 0$.
Because $n \geq 5$, $3 \notin \{1, n-1, n\}$.

If $4 \leq w(\vect{a}) \leq n$, then choose $S \in \mathcal{P}_3(\nset{n+2})$ such that $\card{S \cap A} = 2$.
Then $w(g^S(\vect{a})) = w(\vect{a}) - 2 \notin \{1, n-1, n\}$ because exactly $w(\vect{a}) - 2$ of the projections $g^S_1, \dots, g^S_{n-1}$ map $\vect{a}$ to $1$, and $g^S_n(\vect{a}) = 1 + 1 + 0 = 0$.
\end{pfclaim}

Claim~\ref{clm:nonac-gS} shows that
for all $S \in \mathcal{P}_3(\nset{n+2})$,
$g^S$ maps the true points of $\varphi_{n+2}$ to true points of $\varphi_n$,
and, moreover,
for every false point $\vect{a}$ of $\varphi_{n+2}$, there is an $S \in \mathcal{P}_3(\nset{n+2})$ such that $\varphi_n(g^S(\vect{a})) = 0$.
Consequently,
\[
\theta(\vect{a}) = \bigwedge_{S \in \mathcal{P}_3(\nset{n+2})} \varphi_n(g^S(\vect{a})) = \varphi_{n+2}(\vect{a})
\]
for all $\vect{a} \in \{0,1\}^{n+2}$, which shows that $\theta = \varphi_{n+2}$, as claimed.
\end{proof}

The work reported here concerns $(C_1,C_2)$\hyp{}clonoids of Boolean functions.
A potentially fruitful direction for future research is to generalize these results to clonoids on arbitrary (finite) sets.


\section*{Acknowledgments}

The author would like to thank Sebastian Kreinecker for inspiring discussions.


\end{document}